\DeclareMathAlphabet{\mathpzc}{OT1}{pzc}{m}{it}
\newcommand{\marginextend}[1]{ \addtolength{\oddsidemargin}{-#1}  \addtolength{\evensidemargin}{-#1}
  \addtolength{\textwidth}{#1}\addtolength{\textwidth}{#1}}
\newcommand{\updownextend}[1]{ \addtolength{\topmargin}{-#1}  \addtolength{\textheight}{#1}
\addtolength{\textheight}{#1}}
\DeclareFontFamily{OT1}{pzc}{}
\DeclareFontShape{OT1}{pzc}{m}{it}{<-> s * [1.10] pzcmi7t}{}
\DeclareMathAlphabet{\mathpzc}{OT1}{pzc}{m}{it}
\DeclareSymbolFont{SY}{U}{psy}{m}{n}
\DeclareMathSymbol{\emptyset}{\mathord}{SY}{'306}
\theoremstyle{plain}
\newtheorem{thm}{Theorem}[section]
\newtheorem*{thm*}{Theorem}
\newtheorem{cor}[thm]{Corollary}
\newtheorem{lem}[thm]{Lemma}
\newtheorem{prop}[thm]{Proposition}
\newtheorem{defn}[thm]{Definition}
\newtheorem{rem}[thm]{Remark}
\newtheoremstyle{named}{}{}{\itshape}{}{\bfseries}{.}{.5em}{#1 \thmnote{#3}}
\theoremstyle{named}
\numberwithin{equation}{section}
\def\C{{\mathbb C}}
\def\norm#1{\left\|{#1}\right\|}
\def\l{\lambda}
\def\ov{\overline}
\def\m{\mathcal}
\def\mb{\mathbb}
\def\beq{\begin{eqnarray}}
\def\eeq{\end{eqnarray}}
\def\beqa{\begin{eqnarray*}}
\def\eeqa{\end{eqnarray*}}
\def\ov{\overline}
\def\bl{\boldsymbol}
\def\sgn{{\rm sgn}}
\def\bl{\boldsymbol}
\newcommand{\overbar}[1]{\mkern 1.5mu\overline{\mkern-1.5mu#1\mkern-1.5mu}\mkern 1.5mu}
\newcommand{\be}{\begin{equation}}
\newcommand{\ee}{\end{equation}}
\newcommand{\bea}{\begin{eqnarray}}
\newcommand{\eea}{\end{eqnarray}}
\newcommand{\Bea}{\begin{eqnarray*}}
\newcommand{\Eea}{\end{eqnarray*}}
\newcommand{\inner}[2]{\langle #1,#2 \rangle }%
\newcounter{cnt1}
\newcounter{cnt2}
\newcounter{cnt3}
\newcommand{\blr}{\begin{list}{$($\roman{cnt1}$)$}
 {\usecounter{cnt1} \setlength{\topsep}{0pt}
 \setlength{\itemsep}{0pt}}}
\newcommand{\bla}{\begin{list}{$($\alph{cnt2}$)$}
 {\usecounter{cnt2} \setlength{\topsep}{0pt}
 \setlength{\itemsep}{0pt}}}
\newcommand{\bln}{\begin{list}{$($\arabic{cnt3}$)$}
 {\usecounter{cnt3} \setlength{\topsep}{0pt}
 \setlength{\itemsep}{0pt}}}
\newcommand{\el}{\end{list}}
\DeclareMathOperator  {\adj}{adj}
\title{The weighted Bergman spaces and complex reflection groups}
\author[Ghosh]{Gargi Ghosh}
\email[Ghosh]{gargi.ghosh@uj.edu.pl}
\address[Ghosh]{Jagiellonian University, Faculty of Mathematics and Information Technologies, 30-348 Krakow, Poland}
\subjclass[2020]{32A36, 32H35} \keywords{Weighted Bergman kernels, Complex reflection groups, Proper holomorphic maps, Weighted Bergman projections}
\begin{document} \maketitle

\begin{abstract}
    We consider a bounded domain $\Omega \subseteq \mathbb C^d$ which is a $G$-space for a finite complex reflection group $G$. For each one-dimensional representation of the group $G,$ the relative invariant subspace of the weighted Bergman space on $\Omega$ is isometrically isomorphic to a weighted Bergman space on the quotient domain $\Omega/G.$ Consequently, formulae involving the weighted Bergman kernels and projections of $\Omega$ and $\Omega /G$ are established. As a result, a transformation rule for the weighted Bergman kernels under a proper holomorphic mapping with $G$ as its group of deck transformations is obtained in terms of the character of the sign representation of $G$. Explicit expressions for the weighted Bergman kernels of several quotient domains (of the form $\Omega / G$) have been deduced to demonstrate the merit of the described formulae.
\end{abstract}
\section{Introduction}
Suppose that $G$ is a finite group acting on a bounded domain $\Omega\subseteq \mb C^d.$ Then $\Omega$ is said to be a $G$-invariant domain or a $G$-space. 
In this article, we focus on the following question:
\begin{itemize}
\item For any $G$-space $\Omega,$ how the weighted Bergman kernels and weighted Bergman projections of $\Omega$ and $\Omega / G$ are related? 
\end{itemize}
To achieve our goal, we first observe that it is not obvious that $\Omega/G$ is a domain for a $G$-invariant domain $\Omega$. Although it is known that $\Omega/G$ can be given the structure of a complex analytic space which is biholomorphically equivalent to some domain in $\mb C^d$ whenever $G$ is a finite complex reflection group \cite{MR807258} \cite[Subsection 3.1.1]{BDGS} \cite[Proposition 1]{MR2542964}. Henceforth, we confine our attention to a finite complex reflection group $G$ to establish formulae between the weighted Bergman kernels of the domain $\Omega$ and the quotient domain $\Omega / G$ via the one-dimensional representations of $G.$ This in turn invokes identities involving associated weighted Bergman projections of $\Omega$ and $\Omega / G.$ 
For the sign representation of $G,$ the transformation formula for the weighted Bergman kernels and the weighted Bergman projections generalize Bell's transformation rule for the Bergman kernels under proper holomorphic maps in \cite[p. 687, Theorem 1]{MR645338} to the weighted Bergman kernels and the Bergman projection formula in \cite[p. 167, Theorem 1]{MR610182} to weighted Bergman projections.  As a result, we are able to overcome the limitation of removing the critical points of the proper holomorphic map in the transformation rule for the Bergman kernel. 
However, our approach is entirely different from that of Bell. We largely use various tools from invariant theory of finite complex reflection groups and an analytic version of  well known Chevalley-Shephard-Todd theorem obtained in \cite{BDGS} by the author and her collaborators. We start by recalling notions involving finite complex reflection groups which are necessary to state the main results.


A \emph{complex reflection} on $\C^d$ is a linear homomorphism $\sigma: \C^d \rightarrow \C^d$ such that $\sigma$ has finite order in $GL(d,\mb C)$ and the rank of $({\rm id} - \sigma)$ is 1. A group generated by complex reflections is called a complex reflection group. For example, any finite cyclic group, the symmetric group $\mathfrak{S}_d$ on $d$ symbols, the dihedral groups are complex reflection groups \cite{MR2542964}. A complex reflection group $G$ acts on $\mb C^d$ by (right action) \bea\label{action}\sigma \cdot \bl z = \sigma^{-1}\bl z, \text{ for } \sigma \in G \text{ and }\bl z \in \mb C^d\eea and the group action extends to the set of all complex-valued functions on $\mb C^d$ by $\sigma( f)(\bl z) =  f({\sigma}^{-1}\cdot \bl z), \,\, \text{~for~} \sigma \in G \text{~and~} \bl z \in \mb C^d.$  We call $f$ a $G$-invariant function if $\sigma(f) =f$ for all $\sigma \in G.$ There is a system of $G$-invariant  algebraically independent homogeneous polynomials $\{\theta_i\}_{i=1}^d$ associated to a complex reflection group $G,$ called a homogeneous system of  parameters (hsop) or basic polynomials associated to $G.$ A finite complex reflection group $G$ is characterized by the fact that the ring of $G$-invariant polynomials is a polynomial ring generated by some hsop $\{\theta_i\}_{i=1}^d$ associated to $G$ \cite[p.282]{Shephard-Todd} (cf. Theorem \ref{cst}). The map $\bl \theta:=(\theta_1,\ldots,\theta_d): \mb C^d \to \mb C^d$ is said to be a basic polynomial map associated to the finite complex reflection group $G$ \cite{BDGS,MR3133729}. 

If $\Omega$ is a $G$-space under the action defined in Equation \eqref{action} then $\bl \theta(\Omega)$ is a domain and the quotient topological space $\Omega/G$ is a complex analytic space biholomorphic to $\bl \theta(\Omega).$ Therefore, without loss of generality we work with the domain $\bl \theta(\Omega)$ instead of $\Omega / G.$  Special cases of such quotient domains have been studied in many instances. 
For example, $\mb D^d / \mathfrak{S}_d$ ($\mb D^d$ = the cartesian product of $d$ copies of the unit disc $\mb D$ in the complex plane) can be realized as the symmetrized polydisc (denoted by $\mb G_d$), the tetrablock is biholomorphic to the quotient domain $\m R_{II}/\mathfrak{S}_2$ where $\m R_{II}$ is the classical Cartan domain of second type \cite{MR3133729} and following \cite{bender2020lpregularity}, a monomial polyhedron can be realized as a quotient domain $\Omega / G$ for $\Omega \subseteq \mb D^d$ and a finite abelian group $G.$ 
The symmetrized polydisc has been studied extensively in the last two decades in the context of function theory \cite{MR3771126,MR2135687,MR3043017}, operator theory \cite{MR1674635,MR1744711,MR3188714, MR2142182} and geometry \cite{MR3912883,MR4293930,MR2077158}. Also, the study of geometry and function theory on tetrablock is currently an active area of interest \cite{MR2736338, MR3107680,MR3511461,MR2365665,MR2418303}. Recently, in \cite{MR4088498} the symmetrized polydisc and in \cite{bender2020lpregularity} the monomial polyhedron were considered in connection with $L^p$-regularity of the Bergman projection and it turns out that Theorem \ref{maint} is a key tool to study $L^p$-regularity of the weighted Bergman projections on quotient domains \cite{gho-gho}. 

For an analytic Hilbert module $\m H$ on $\Omega,$ the relative invariant subspace associated to a one-dimensional representation $\varrho$ of $G,$ is defined by
\Bea
R^G_{\varrho}(\m H) = \{f \in \m H : \sigma(f)  = \chi_\varrho(\sigma) f ~ {\rm for ~~ all~} \sigma \in G \},
\Eea
 where $\chi_\varrho$ denotes the character of the representation $\varrho$. We show that the subspace $R^G_{\varrho}(\m H)$ is a reproducing kernel Hilbert space (cf. Lemma \ref{ortho}). 
The elements  of $R_\varrho^G(\m H)$ are said to be $\varrho$-invariant.  There exists a polynomial (unique up to a constant multiple), say $\ell_\varrho,$ which forms a basis of the ring of $\varrho$-invariant polynomials as a free module over the ring of $G$-invariant polynomials in $d$ variables \cite[p. 139, Theorem 3.1]{MR460484}. A characterization of the relative invariant subspace is that every $f \in R_\varrho^G(\m H)$ is divisible by the polynomial $\ell_\varrho$ and the quotient is in the ring of $G$-invariant holomorphic functions on $\Omega$ (cf. Lemma \ref{quo}). The polynomial $\ell_\varrho$ plays an important role in our discussion. An explicit expression for $\ell_\varrho$ (unique up to a constant multiple) has been obtained from the representation $\varrho$ \cite[p. 139, Theorem 3.1]{MR460484} (cf. Lemma \ref{gencz}). 

We now briefly describe the results we have proved in this article.
\subsection{On the weighted Bergman kernels}
    Given a continuous weight function $\omega: \Omega \to (0,\infty),$ $L_\omega^2(\Omega)$ denotes the Hilbert space of Lebesgue measurable functions (equivalence classes of functions) on $\Omega$ which are square integrable with respect to the measure $\omega(\bl z)dV(\bl z),$ where $dV$ is the normalized Lebesgue measure on $\Omega.$ The weighted Bergman space $\mb A^2_\omega(\Omega)$ is the closed subspace consisting of holomorphic functions in $L_\omega^2(\Omega)$. For $\omega \equiv 1,$ $\mb A^2_\omega(\Omega)$ reduces to the Bergman space $\mb A^2(\Omega).$ 
    We consider a weight function of the form $\omega =\widetilde{\omega}\circ \bl \theta$ for a continuous map $\widetilde{\omega} : \bl \theta(\Omega) \to (0,\infty).$  For each one-dimensional representation $\varrho$ of $G,$ we set $\omega_\varrho(\bl \theta(\bl z)) = \frac{|\ell_\varrho(\bl z)|^2}{|J_{\bl \theta}(\bl z)|^2} \widetilde{\omega}(\bl \theta(\bl z)),$ where $J_{\bl \theta}$ is the determinant of the complex jacobian matrix of the basic polynomial map $\bl \theta.$ The following theorem is the main result of this article.
\begin{thm}\label{firstmain}
Let $G$ be a finite complex reflection group and a bounded domain $\Omega \subseteq \mb C^d$ be a $G$-space. Then for each one-dimensional representation $\varrho$ of $G,$ the relative invariant subspace  $R^G_{\varrho}(\mb A^2_\omega(\Omega))$ is isometrically isomorphic to the weighted Bergman space $\mb A^2_{\omega_\varrho}(\bl \theta(\Omega)),$ where $\bl \theta$ is a basic polynomial map associated to the group $G.$

Moreover, the reproducing  kernel $\m B_{\omega_\varrho}$ of the weighted Bergman space $\mb A^2_{\omega_\varrho}\big(\bl \theta(\Omega)\big)$ is given by
\bea\label{formu} \m B_{\omega_\varrho}\big(\bl \theta(\bl z), \bl \theta(\bl w)\big) = \frac{1}{\ell_\varrho(\bl z) \ov{\ell_\varrho(\bl w)}} \displaystyle\sum_{\sigma \in G} \chi_\varrho(\sigma^{-1}) \m B_\omega(\sigma^{-1} \cdot \bl z, \bl w) \text{~~for ~~}\bl z, \bl w \in \Omega,\eea
where $\m B_\omega$ is the reproducing kernel of the weighted Bergman space $\mb A^2_\omega(\Omega)$, $\chi_\varrho$ denotes the character of the representation $\varrho$ and $\ell_\varrho$ is as described in Lemma \ref{gencz}. 
\end{thm}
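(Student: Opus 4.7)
The plan is to build an explicit isometric isomorphism by division by $\ell_\varrho$ and then read off the kernel formula from the isomorphism. Lemma~\ref{quo}, recalled just above, factorises any $f\in R^G_\varrho(\mathbb A_\omega^2(\Omega))$ as $f=\ell_\varrho\cdot g$ with $g$ holomorphic and $G$-invariant, and the analytic Chevalley--Shephard--Todd theorem from \cite{BDGS} then produces a unique holomorphic $\widetilde g$ on $\bl\theta(\Omega)$ with $g=\widetilde g\circ\bl\theta$. This motivates the candidate map
\[
U\colon R^G_\varrho(\mathbb A_\omega^2(\Omega))\to\mathbb A^2_{\omega_\varrho}(\bl\theta(\Omega)),\qquad U\bigl(\ell_\varrho\cdot(\widetilde g\circ\bl\theta)\bigr)=\sqrt{|G|}\,\widetilde g,
\]
whose putative inverse $\widetilde g\mapsto\tfrac{1}{\sqrt{|G|}}\ell_\varrho(\widetilde g\circ\bl\theta)$ lands in $R^G_\varrho$ by the transformation law $\sigma(\ell_\varrho)=\chi_\varrho(\sigma)\ell_\varrho$ and the $G$-invariance of $\widetilde g\circ\bl\theta$.

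Next I would verify that $U$ is an isometry by change of variables. Because $|\chi_\varrho(\sigma)|=1$ and $J_{\bl\theta}(\sigma^{-1}\cdot\bl z)=(\det\sigma)^{-1}J_{\bl\theta}(\bl z)$, both $|\ell_\varrho|^2$ and $|J_{\bl\theta}|^2$ are $G$-invariant, so $|\ell_\varrho|^2/|J_{\bl\theta}|^2$ descends to a well-defined function on $\bl\theta(\Omega)$, which is precisely what legitimises the definition of $\omega_\varrho$. Since $\bl\theta$ is a proper map which is generically $|G|$-to-one with complex Jacobian determinant $J_{\bl\theta}$, the standard change-of-variables formula yields
\[
\|f\|^2_{\mathbb A^2_\omega(\Omega)}=\int_\Omega|\ell_\varrho|^2|\widetilde g\circ\bl\theta|^2\,\widetilde\omega\circ\bl\theta\,dV=\int_\Omega(\omega_\varrho\circ\bl\theta)|J_{\bl\theta}|^2|\widetilde g\circ\bl\theta|^2\,dV=|G|\,\|\widetilde g\|^2_{\mathbb A^2_{\omega_\varrho}(\bl\theta(\Omega))},
\]
which gives $\|Uf\|=\|f\|$ as required.

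The kernel formula is then obtained by computing the reproducing kernel $K_{R^G_\varrho}$ of the closed subspace $R^G_\varrho\subseteq\mathbb A^2_\omega(\Omega)$ in two different ways. On the one hand, $R^G_\varrho$ is the range of the self-adjoint bounded projection $P_\varrho=\tfrac{1}{|G|}\sum_{\sigma\in G}\chi_\varrho(\sigma^{-1})\sigma$ (a routine character-theoretic calculation on the unitary action of $G$ on $\mathbb A^2_\omega(\Omega)$), so
\[
K_{R^G_\varrho}(\bl z,\bl w)=\bigl(P_\varrho\,\m B_\omega(\cdot,\bl w)\bigr)(\bl z)=\frac{1}{|G|}\sum_{\sigma\in G}\chi_\varrho(\sigma^{-1})\,\m B_\omega(\sigma^{-1}\cdot\bl z,\bl w).
\]
On the other hand, pulling $\m B_{\omega_\varrho}(\cdot,\bl\theta(\bl w))$ back through $U$ and using the reproducing property forces
\[
K_{R^G_\varrho}(\bl z,\bl w)=\tfrac{1}{|G|}\ell_\varrho(\bl z)\overline{\ell_\varrho(\bl w)}\,\m B_{\omega_\varrho}(\bl\theta(\bl z),\bl\theta(\bl w)),
\]
the prefactor $\tfrac{1}{|G|}$ being the combined contribution of the two $\sqrt{|G|}$ normalisations in $U$. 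Equating these two expressions and clearing the $|G|$ denominators delivers the formula \eqref{formu}.

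The main technical delicacy is the change-of-variables across the branch locus $\{J_{\bl\theta}=0\}$ and the concomitant question of whether $\omega_\varrho$ is a bona fide weight on $\bl\theta(\Omega)$: although the branch locus is an analytic set of complex codimension at least one and hence negligible for the area integrals, one must use the explicit description of $\ell_\varrho$ from Lemma~\ref{gencz} to control the behaviour of $\omega_\varrho$ near the critical values of $\bl\theta$. Once this invariant-theoretic input is secured, the projection formula for $K_{R^G_\varrho}$ and the kernel transport through $U$ are both formal, so the remainder of the argument is entirely algebraic.
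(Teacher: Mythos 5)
Your proposal is correct and follows essentially the same route as the paper: the paper's unitary $\Gamma_\varrho\psi=\tfrac{1}{\sqrt{|G|}}(\psi\circ\bl\theta)\ell_\varrho$ is exactly the inverse of your map $U$, its surjectivity rests on the same combination of Lemma \ref{quo} with the analytic Chevalley--Shephard--Todd theorem, the isometry is the same change-of-variables computation off the (measure-zero) reflecting hyperplanes, and the kernel formula is likewise obtained by equating the projection expression $\bigl(\mb P_\varrho(\m B_\omega)_{\bl w}\bigr)(\bl z)$ with the kernel transported through the unitary. The only cosmetic difference is the direction in which you orient the isomorphism.
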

We provide a number applications of Theorem \ref{firstmain} in Section \ref{app} to determine explicit formulae for the weighted Bergman kernels of various quotient domains. For example, the weighted Bergman space $\mb A^2_\omega(\mb D^d)$ with the $\mathfrak{S}_d$-invariant weight function $\omega(\bl z) = \prod_{j=1}^d (1-|z_j|^2)^{\l -2}, \l>1,$ has two relative invariant subspaces 
$\mb A^2_{\omega,\rm anti}(\mb D^d) = \{f \in \mb A^2_\omega(\mb D^d): \sigma(f) = {\rm sign}(\sigma) f \text{~for~} \sigma \in \mathfrak{S}_d\}$ and $\mb A^2_{\omega,\rm sym}(\mb D^d) = \{f \in \mb A^2_\omega(\mb D^d): \sigma(f) = f \text{~for~} \sigma \in \mathfrak{S}_d\}$ associated to the only one-dimensional representations (which are trivial representation and sign representation) of $\mathfrak{S}_d.$ The subspace $\mb A^2_{\omega,\rm anti}(\mb D^d)$ is isometrically isomorphic to some weighted Bergman space on $\mb G_d$ with the reproducing kernel $$\m B_{\widetilde{\omega}}\big(\bl s(\bl z), \bl s(\bl w)\big)=\frac{\det \big(\!\!\big( (1 - z_i \bar{w_j})^{-\l}\big)\!\!\big)_{i,j =1 }^{d}}{\displaystyle\prod_{i<j} (z_i-z_j) (\overbar{w_i}-\overbar{w_j})},\,\, \bl z, \bl w \in \mb D^d,$$ and $\mb A^2_{\omega,\rm sym}(\mb D^d)$ is isometrically isomorphic to another weighted Bergman space on $\mb G_d$ whose reproducing kernel is given by $$\m B_{\widetilde{\omega}_1}(\bl s(\bl z), \bl s(\bl w)) = {\rm perm}\big(\!\!\big( (1 - z_i \bar{w_j})^{-\l}\big)\!\!\big)_{i,j =1 }^{d},\,\, \bl z, \bl w \in \mb D^d,$$ where ${\rm perm}A$ denotes the permanent of the matrix $A$ (cf. Proposition \ref{anti} and Proposition \ref{sym}).

\begin{enumerate}[leftmargin=*]
\item[1.]{\fontfamily{qpl}\selectfont The case of abelian groups.} If $G$ is a finite abelian group, each irreducible representation of $G$ is one-dimensional. In this case, the weighted Bergman kernel $\mb A^2_\omega(\Omega)$ is isometrically isomorphic to an orthogonal direct sum of weighted Bergman spaces on $\bl \theta(\Omega),$ that is, $$\mb A^2_\omega(\Omega) \cong \displaystyle\oplus_{\varrho \in \widehat{G}} \mb A^2_{\omega_\varrho}(\bl \theta(\Omega)),$$ where $\widehat{G}$ is the set of equivalence classes of irreducible representations of $G.$ The reproducing kernel of each weighted Bergman space $\mb A^2_{\omega_\varrho}(\bl \theta(\Omega))$ can be obtained from Equation \eqref{formu}. Therefore, we have the following identity involving weighted Bergman kernels:
\Bea
\m B_\omega(\bl z,\bl w) = \frac{1}{|G|} \sum_{\varrho \in \widehat{G}} \ell_\varrho(\bl z) \m B_{\omega_\varrho}\big(\bl \theta(\bl z),\bl \theta(\bl w)\big)\overline{\ell_\varrho(\bl w)}.
\Eea
Recently, in \cite{MR4244876} Nagel and Pramanik made an analogous observation when a basic polynomial map of some finite abelian group is given by a monomial type mapping. 
\item[2.]{\fontfamily{qpl}\selectfont The case of sign representation.} The one-dimensional representation,  $\sgn : G \to \mb C^*$ is defined by \bea\label{sign} \sgn(\sigma) = (\det(\sigma))^{-1},\eea see \cite{MR460484}. In the proceedings, we refer this representation as \emph{sign representation}. 
For the sign representation, Equation \eqref{formu} reduces to the following identity which is worth mentioning:
\bea\label{la}\m B_{\widetilde{\omega}}\big(\bl \theta(\bl z), \bl \theta(\bl w)\big) = \frac{1}{J_{\bl \theta}(\bl z) \ov{J_{\bl \theta}(\bl w)}} \displaystyle\sum_{\sigma \in G} \det(\sigma) \m B_\omega(\sigma^{-1} \cdot \bl z, \bl w)  \,\,\,\,\,\, \text{for}\,\,  \bl z, \bl w \in \Omega.
\eea
For $\omega \equiv 1,$ this identity emerges as a very convenient tool to determine explicit formula for the Bergman kernel of $\Omega / G$ in terms of the Bergman kernel of $\Omega.$ The Bergman kernel plays a crucial role in complex analysis and complex geometry. For instance, an explicit expression for the Bergman kernel of a domain is essential for understanding the boundary behaviour of geodesics arising from the Bergman metric of the domain and for characterizing Lu Qi-keng domains, see \cite{MR1775958,MR1469401,MR338454,MR0473215}.  We demonstrate the efficiency of Equation \eqref{la} by obtaining explicit formulae for the Bergman kernels for several domains, namely, symmetrized polydisc, monomial polyhedron, a subclass of complex ellipsoids and $\mb D^2 / D_{2k}$ for the dihedral group $D_{2k}$.
\item[3.]{\fontfamily{qpl}\selectfont Weighted Bergman kernel and proper holomorphic maps.} 
We consider a proper holomorphic map $\bl f :  \Omega \to \widetilde{\Omega}$ with $G$ as the group of deck transformations. Such a map $\bl f  $ is also called a proper holomorphic map \emph{factored by (automorphisms)} $G$ in \cite{MR807258,MR1131852}, see also \cite[p. 7]{BDGS}. In Proposition \ref{representative}, we show that  $\bl f : \Omega\to \widetilde{\Omega}$ can be written as $\bl f = \bl h \circ \bl \theta$ for a biholomorphism $\bl h : \bl \theta(\Omega) \to \widetilde{\Omega}$ and a basic polynomial map $\bl \theta : \Omega \to \bl \theta(\Omega)$ associated to the group $G.$ This observation leads us to the following transformation rule for the weighted Bergman kernels under a proper holomorphic map with $G$ as the group of deck transformations: \bea\label{lala}\m B_{\widetilde{\omega}}\big(\bl f(\bl z), \bl f(\bl w)\big) = \frac{1}{J_{\bl f}(\bl z) \ov{J_{\bl f}(\bl w)}} \displaystyle\sum_{\sigma \in G} \det(\sigma) ~\m B_\omega(\sigma^{-1} \cdot \bl z, \bl w)  \,\,\,\,\,\, \text{for}\,\,  \bl z, \bl w \in \Omega,\eea
where the weight is of the form $\omega= \widetilde{\omega}\circ \bl f$, $\m B_\omega$ and $\m B_{\widetilde{\omega}}$ are the reproducing kernels of $\mb A^2_\omega(\Omega)$ and $\mb A^2_{\widetilde{\omega}}(\widetilde{\Omega}),$ respectively. 
As expected, for $\omega \equiv 1,$ Equation \eqref{lala} overlaps with the transformation rule described by Steven Bell in \cite[p. 687, Theorem 1]{MR645338}. We emphasize that Equation \eqref{lala} works for the critical points of $\bl f$ as well.
\item[4.]{\fontfamily{qpl}\selectfont Bergman kernels for Rudin's domains.} A family of quotient domains of the form $\mb B_d / G$ is described in \cite{MR667790}, where $\mb B_d$ denotes the open unit ball with respect to the $\ell^2$-norm on $\mb C^d$ and the group $G$ is a conjugate to a finite complex reflection group. Following \cite[p. 427]{MR742433}, we refer such domains as Rudin's domains. The domain $\Omega \subset \mb C^d$ is a Rudin's domain if and only if there exists a proper holomorphic map $\bl F: \mb B_d \to \Omega.$ 
Then the Bergman kernel $\m B_\Omega$ of a Rudin's domain $\Omega$ is given by the following formula: 
\Bea
\m B_\Omega\big(\bl F(\bl z), \bl F(\bl w)\big)= \frac{1}{J_{\bl F}(\bl z) \ov{J_{\bl F}(\bl w)}} \displaystyle\sum_{\sigma \in G}  \frac{J_{\sigma^{-1}}(\bl z)}{\big(1 - \langle \sigma^{-1} \cdot\bl z, \bl w \rangle \big)^{d+1}},\,\, \bl z, \bl w \in \mb B_d,
\Eea
where 
$\langle \cdot, \cdot\rangle$ denotes the standard inner product in $\mb C^d.$
\end{enumerate}
 
\subsection{On the weighted Bergman projections} The weighted Bergman projection $P_\Omega^\omega: L^2_\omega(\Omega) \to \mb A^2_\omega(\Omega)$ is defined by $$(P_\Omega^\omega \phi)(\bl z) =\inner{\phi}{\m B_\omega(\cdot,\bl z)}  = \int_\Omega \phi(\bl w) \m B_\omega(\bl z,\bl w) \omega(\bl w) dV(\bl w), \,\, \phi \in L^2_\omega(\Omega),$$ where $\m B_\omega$ is the reproducing kernel of the weighted Bergman space $\mb A^2_\omega(\Omega).$ For a holomorphic function $\phi$ in $L^2_\omega(\Omega),$ we have $(P_\Omega^\omega \phi)(\bl z) =\inner{\phi}{\m B_\omega(\cdot,\bl z)} = \phi(\bl z).$ 
 \begin{thm}\label{maint}
 For each one-dimensional representation $\varrho$ of $G,$ the weighted Bergman projections $P_{\bl \theta(\Omega)}^{\omega_\varrho}$ and $P_\Omega^\omega$ are related by the following formula: 
 \bea\label{bpro}
P_\Omega^\omega\big(\ell_\varrho ~ (\phi \circ \bl \theta) \big)= \ell_\varrho\big((P_{\bl \theta(\Omega)}^{\omega_\varrho} \phi) \circ \bl \theta\big) , \,\, \phi \in   L^2_{\omega_\varrho}\big(\bl \theta(\Omega)\big),
\eea
where $P_{\bl \theta(\Omega)}^{\omega_\varrho} :  L^2_{\omega_\varrho}(\bl \theta(\Omega))\to \mb A^2_{\omega_\varrho}(\bl \theta(\Omega))$ is the weighted Bergman projection.
 \end{thm}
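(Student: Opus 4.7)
The plan is to realize the map $\Gamma_\varrho\colon L^2_{\omega_\varrho}(\bl\theta(\Omega))\to L^2_\omega(\Omega)$ defined by $\Gamma_\varrho(\phi)=\ell_\varrho\cdot (\phi\circ\bl\theta)$ as a (scalar multiple of an) isometry onto the $\varrho$-relative invariant subspace of $L^2_\omega(\Omega)$, and then to exploit the fact that $P_\Omega^\omega$ commutes with the $G$-action. This upgrades Theorem \ref{firstmain} from a statement about the Bergman subspaces to one intertwining the full projections.

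First I would verify that $\Gamma_\varrho$ is (a constant multiple of) an isometry. Writing $\omega=\widetilde\omega\circ\bl\theta$, a change of variables through the generically $|G|$-to-one branched covering $\bl\theta\colon\Omega\to\bl\theta(\Omega)$ yields
\[
\int_\Omega |\ell_\varrho(\bl z)|^{2}|\phi(\bl\theta(\bl z))|^{2}\omega(\bl z)\,dV(\bl z)= c\int_{\bl\theta(\Omega)}|\phi(\bl u)|^{2}\omega_\varrho(\bl u)\,dV(\bl u),
\]
where I use the definition $\omega_\varrho(\bl\theta(\bl z))=|\ell_\varrho(\bl z)|^{2}|J_{\bl\theta}(\bl z)|^{-2}\widetilde\omega(\bl\theta(\bl z))$ together with the invariant-theoretic fact that $|\ell_\varrho|^{2}/|J_{\bl\theta}|^{2}$ is $G$-invariant and so descends to $\bl\theta(\Omega)$ (this is already implicit in Theorem \ref{firstmain}). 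Moreover $\Gamma_\varrho(\phi)$ is $\varrho$-relative invariant, since
\[
\sigma(\ell_\varrho\cdot(\phi\circ\bl\theta))=\sigma(\ell_\varrho)\cdot(\phi\circ\bl\theta\circ\sigma^{-1})=\chi_\varrho(\sigma)\,\ell_\varrho\cdot(\phi\circ\bl\theta),
\]
using $\sigma(\ell_\varrho)=\chi_\varrho(\sigma)\ell_\varrho$ and $\bl\theta\circ\sigma^{-1}=\bl\theta$. Conversely, every $\varrho$-relative invariant $L^{2}$ function is divisible by $\ell_\varrho$ with $G$-invariant quotient (the $L^{2}$ analogue of Lemma \ref{quo}), so $\Gamma_\varrho$ maps \emph{onto} $R^G_\varrho(L^2_\omega(\Omega))$.

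Next, since $\omega=\widetilde\omega\circ\bl\theta$ is $G$-invariant and Lebesgue measure is preserved by the unitary action of $G$ on $\mb C^{d}$, the $G$-action on $L^2_\omega(\Omega)$ is unitary; since $G$ acts by biholomorphisms, $\mb A^2_\omega(\Omega)$ is $G$-invariant. Hence $P_\Omega^\omega$ commutes with the $G$-action and therefore preserves $R^G_\varrho(L^2_\omega(\Omega))$, sending it into $R^G_\varrho(\mb A^2_\omega(\Omega))$. By Theorem \ref{firstmain}, $\Gamma_\varrho$ restricts to an isometric isomorphism $\mb A^2_{\omega_\varrho}(\bl\theta(\Omega))\to R^G_\varrho(\mb A^2_\omega(\Omega))$, so there is a unique $\psi\in \mb A^2_{\omega_\varrho}(\bl\theta(\Omega))$ with $P_\Omega^\omega(\Gamma_\varrho(\phi))=\Gamma_\varrho(\psi)$. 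I would identify $\psi$ with $P_{\bl\theta(\Omega)}^{\omega_\varrho}\phi$ by the orthogonality characterization of the projection: for any $\eta\in \mb A^2_{\omega_\varrho}(\bl\theta(\Omega))$,
\[
\langle\phi-\psi,\eta\rangle_{L^2_{\omega_\varrho}}=c^{-1}\langle \Gamma_\varrho(\phi)-\Gamma_\varrho(\psi),\Gamma_\varrho(\eta)\rangle_{L^2_\omega}=0,
\]
because $\Gamma_\varrho(\phi)-P_\Omega^\omega(\Gamma_\varrho(\phi))\perp \mb A^2_\omega(\Omega)$ while $\Gamma_\varrho(\eta)\in R^G_\varrho(\mb A^2_\omega(\Omega))\subset \mb A^2_\omega(\Omega)$. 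Uniqueness of the orthogonal projection then gives $\psi=P_{\bl\theta(\Omega)}^{\omega_\varrho}\phi$, which is exactly \eqref{bpro}.

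The main obstacle is the isometry of $\Gamma_\varrho$ on the whole of $L^2_{\omega_\varrho}$, rather than just on the Bergman subspace already handled by Theorem \ref{firstmain}; the argument needs a careful branched-covering change of variables, performed away from the measure-zero critical locus $\{J_{\bl\theta}=0\}$, together with the descent of $|\ell_\varrho|^{2}/|J_{\bl\theta}|^{2}$ to $\bl\theta(\Omega)$. A parallel and more computational route would substitute the kernel formula \eqref{formu} into the integral defining $(P_{\bl\theta(\Omega)}^{\omega_\varrho}\phi)\circ\bl\theta$, change variables by $\bl v=\bl\theta(\bl w)$, and use the $\varrho$-relative invariance of $P_\Omega^\omega(\ell_\varrho(\phi\circ\bl\theta))$ to collapse the sum over $\sigma\in G$ into a single copy of $P_\Omega^\omega$; this yields \eqref{bpro} by direct computation.
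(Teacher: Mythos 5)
Your main argument is correct, and it takes a genuinely different route from the paper's. The paper proves \eqref{bpro} by a direct kernel computation: it writes $(P_{\bl \theta(\Omega)}^{\omega_\varrho}\phi)(\bl\theta(\bl w)) = \inner{\phi}{(\m B_{\omega_\varrho})_{\bl\theta(\bl w)}}$, transfers this inner product to $L^2_\omega(\Omega)$ via the isometric extension of $\Gamma_\varrho$, substitutes the expansion of $\Gamma_\varrho(\m B_{\omega_\varrho})_{\bl\theta(\bl w)}$ supplied by \eqref{formu} as a sum $\sum_{\sigma\in G}\chi_\varrho(\sigma^{-1})(\m B_\omega)_{\sigma\cdot\bl w}$ divided by $\ov{\ell_\varrho(\bl w)}$, and then moves each $\sigma$ onto the test function by unitarity so that the $\varrho$-invariance of $\ell_\varrho(\phi\circ\bl\theta)$ collapses the sum to a single copy of $P_\Omega^\omega$ --- this is precisely the \emph{parallel computational route} you sketch in your final sentence. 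Your primary argument instead avoids kernels entirely: it observes that the unitary $G$-action commutes with $P_\Omega^\omega$, that $\Gamma_\varrho$ maps $L^2_{\omega_\varrho}(\bl\theta(\Omega))$ isometrically into $R^G_\varrho(L^2_\omega(\Omega))$ and restricts to an isometric isomorphism of $\mb A^2_{\omega_\varrho}(\bl\theta(\Omega))$ onto $R^G_\varrho(\mb A^2_\omega(\Omega))$ (Lemma \ref{equiv}), and then pins down $P_\Omega^\omega(\Gamma_\varrho\phi)$ by the orthogonality characterization of the projection. Your route buys a cleaner, purely structural proof that needs nothing beyond Lemma \ref{equiv}; the paper's computation buys the pointwise formula directly out of \eqref{formu}. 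One simplification is available to you: the surjectivity of $\Gamma_\varrho$ onto all of $R^G_\varrho(L^2_\omega(\Omega))$ --- your proposed $L^2$ analogue of Lemma \ref{quo}, which is the most delicate assertion you make, since divisibility arguments do not transfer verbatim to merely measurable functions --- is never actually used in your chain of reasoning; only the containment $\Gamma_\varrho\big(L^2_{\omega_\varrho}(\bl\theta(\Omega))\big)\subseteq R^G_\varrho(L^2_\omega(\Omega))$ and surjectivity at the level of the Bergman subspaces are needed, so that claim can be dropped. Both proofs rest on the one ingredient the paper asserts without proof, namely that $\Gamma_\varrho$ extends isometrically to $L^2_{\omega_\varrho}(\bl\theta(\Omega))$ via the branched-covering change of variables, which you at least indicate how to verify.
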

That is, we get $k$ formulae involving weighted Bergman projections, where $k=$ the number of one-dimensional representations in $\widehat{G}.$ For a fixed one-dimensional representation $\varrho \in \widehat{G},$ Equation \eqref{bpro} is true for the weight tuple $(\omega,\omega_\varrho)$ whenever $\omega_\varrho(\bl \theta(\bl z)) = \frac{|\ell_\varrho(\bl z)|^2}{|J_{\bl \theta}(\bl z)|^2} \omega(\bl z)$ for $\bl z \in \Omega$. For the sign representation of $G,$ Equation \eqref{bpro} generalizes Bell's formula for Bergman projections \cite[Equation 2.2, p. 686]{MR645338} to the weighted Bergman projections for the weight tuples $(\omega,\widetilde{\omega})$, where $\omega = \widetilde{\omega} \circ \bl \theta$. We expect to use Theorem \ref{bpro} in the study of $L^p$ regularity of the Bergman projections. Lastly, we mention a very recent paper in this direction \cite{MR4523519}.

\section{Complex reflection groups and analytic Hilbert modules}\label{sect2} We begin by recalling a number of useful definitions and standard results about complex reflection groups. 
\subsection{Chevalley-Shephard-Todd Theorem}
Let $G$ be a finite complex reflection group. Recall that $G$ acts on the set of functions on $\C^d$ by $\sigma (f)(\bl z)=f({\sigma}^{-1}\cdot \bl z).$ A function is said to be $G$-invariant if $\sigma(f)=f,$ for all $\sigma \in G.$ The ring of all complex polynomials in $d$ variables is denoted by $\mb C[z_1,\ldots,z_d]$. Moreover, the set of all $G$-invariant polynomials, denoted by $\mb C[z_1,\ldots,z_d]^G$, forms a subring and coincides with the relative invariant subspace $R^G_{\rm trivial}(\mb C[z_1,\ldots,z_d])$ associated to the trivial representation of $G$. Chevalley, Shephard and Todd characterize finite complex reflection groups in the following theorem.
\begin{thm}[CST Theorem]\label{cst}\cite[ p. 112, Theorem 3]{MR1890629}\label{A}
The invariant ring $\C[z_1,\ldots,z_d]^G$ is equal to $\C[\theta_1,\ldots,\theta_d]$, where $\theta_i$'s are algebraically independent
homogeneous polynomials if and only if $G$ is a finite complex reflection group.
\end{thm}
The collection of homogeneous polynomials $\{\theta_i\}_{i=1}^d$ is called a homogeneous system of parameters (hsop) or basic polynomials associated to the complex reflection group $G$. Although a hsop is not unique but the degrees of $\theta_i$'s are unique for $G$ up to order.
The map ${\bl\theta}: \C^d \rightarrow \C^d$, defined by
\bea\label{theta}
{\bl\theta}(\bl z) = \big(\theta_1(\bl z),\ldots,\theta_d(\bl z)\big),\,\,\bl z\in\C^d
\eea is called a basic polynomial map associated to the group $G.$ The following proposition shows that a complex reflection group $G$ always induces a canonical polynomial proper mapping.
\begin{prop}\label{domain}\cite[p. 16, Proposition 5.4]{BDGS}
Let $\Omega \subseteq \C^d$ be a $G$-invariant domain. Then 
\begin{enumerate}
    \item[(i)] $\bl \theta(\Omega)$ is a domain, and
    \item[(ii)] $\bl \theta : \Omega \to \bl \theta(\Omega)$ is a proper holomorphic map with $G$ as the group of deck transformations.
\end{enumerate}
\end{prop}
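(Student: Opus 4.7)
The plan is to first establish properness of $\bl\theta: \C^d \to \C^d$ globally, and then deduce the two statements from that together with the $G$-invariance of $\Omega$.

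\emph{Step 1: Global properness.} I would first observe that the only common zero of the homogeneous polynomials $\theta_1,\ldots,\theta_d$ is the origin. Indeed, by the CST theorem $\mb{C}[\theta_1,\ldots,\theta_d]=\mb{C}[z_1,\ldots,z_d]^G$, so the $\theta_i$ separate $G$-orbits; in particular, the common zero locus is a single $G$-orbit lying over $\bl{0}$, which is $\{\bl 0\}$. Using the positive degrees $d_i = \deg\theta_i$ and the standard compactness argument on the unit sphere (writing $\theta_i(\bl z) = \|\bl z\|^{d_i}\theta_i(\bl z/\|\bl z\|)$ and passing to a limit direction $\bl u$ with $\|\bl u\|=1$, some $\theta_i(\bl u)\neq 0$), one concludes that $\|\bl\theta(\bl z)\|\to\infty$ as $\|\bl z\|\to\infty$, i.e.\ $\bl\theta: \C^d \to \C^d$ is proper. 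A proper holomorphic map between equidimensional complex manifolds is a finite branched covering, hence open and surjective.

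\emph{Step 2: $\bl\theta(\Omega)$ is a domain.} Since $\Omega$ is $G$-invariant and the fibers of $\bl\theta$ are precisely the $G$-orbits (by CST), we have $\bl\theta^{-1}(\bl\theta(\Omega))=\Omega$. By openness of $\bl\theta$ from Step 1, the image $\bl\theta(\Omega)$ is open in $\C^d$; by continuity it is connected. This proves (i).

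\emph{Step 3: Properness on $\Omega$.} For any compact $K\subseteq\bl\theta(\Omega)$, the preimage $\bl\theta^{-1}(K)$ is compact in $\C^d$ by Step 1 and is contained in $\bl\theta^{-1}(\bl\theta(\Omega))=\Omega$ by the identity just established. Hence $\bl\theta:\Omega\to\bl\theta(\Omega)$ is proper. It is obviously holomorphic and surjective onto $\bl\theta(\Omega)$.

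\emph{Step 4: Deck transformations equal $G$.} Each $\sigma\in G$ restricts to a biholomorphism of the $G$-invariant domain $\Omega$, and $\bl\theta\circ\sigma=\bl\theta$ since each $\theta_i$ is $G$-invariant; this gives the inclusion of $G$ into the deck group. For the converse, let $Z\subseteq\Omega$ be the (proper analytic) branch locus where $J_{\bl\theta}$ vanishes; off $Z$ the map $\bl\theta$ is an unramified $|G|$-sheeted holomorphic covering onto its image, and $G$ acts freely and transitively on each such fiber. Any deck transformation $\phi$ sends each regular point $\bl z$ to some element of its $G$-orbit; local continuity on $\Omega\setminus Z$ forces $\phi$ to coincide with a single $\sigma\in G$ on each connected component there. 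As $\Omega\setminus Z$ is connected (since $Z$ has complex codimension one in the connected $\Omega$) and the action of $G$ on $\Omega\setminus Z$ has no fixed points, $\phi$ agrees with one fixed $\sigma\in G$ on the dense open set $\Omega\setminus Z$, and hence on all of $\Omega$ by holomorphy. The main delicate point is this last step: verifying connectedness of $\Omega\setminus Z$ and the consistent local choice of $\sigma$, which is where the argument could go wrong if one is not careful about the analytic structure of the branch locus and the global selection of $\sigma$.
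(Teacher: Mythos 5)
The paper offers no proof of this proposition at all: it is imported verbatim from \cite[Proposition 5.4]{BDGS}, so there is no in-text argument to compare yours against. Your proof is correct and self-contained, and the route (global properness of $\bl \theta$ on $\C^d$ via homogeneity and the compactness argument on the sphere, then restriction using $\bl \theta^{-1}(\bl \theta(\Omega))=\Omega$) is the natural one. Two inputs you use implicitly deserve to be made explicit. First, ``the $\theta_i$ separate $G$-orbits'' is not literally the CST theorem: it is CST combined with the standard fact that the invariants of a finite linear group separate orbits (average over $G$ a polynomial vanishing on one finite orbit and not on the other); this orbit-separation fact is what makes the fibers of $\bl \theta$ exactly the $G$-orbits, on which Steps 2 and 3 rest. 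Second, in Step 4 you need $G$ to act freely on $\Omega\setminus Z$ with $Z$ the zero set of $J_{\bl \theta}$; since $Z$ is the union of the reflecting hyperplanes (Corollary \ref{Jac}), freeness there is Steinberg's fixed-point theorem (point stabilizers are generated by the pseudoreflections fixing the point). If you prefer not to invoke that, replace $Z$ by $Z\cup\bigcup_{\sigma\neq {\rm id}}\ker(\sigma-{\rm id})$, a finite union of proper linear subspaces whose complement in $\Omega$ is still connected and dense, and your locally-constant-sheet argument runs verbatim. With these two points spelled out, the step you yourself flag as delicate is sound: a proper analytic subset of a connected complex manifold has connected complement, so the locally constant assignment $\bl z\mapsto\sigma_{\bl z}$ is globally constant, and continuity (or the identity theorem) extends the equality $\phi=\sigma$ across $Z$.
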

Moreover, if $\bl \theta'$ is another basic polynomial map associated to the group $G,$ then the domain $\bl \theta'(\Omega)$ is biholomorphically equivalent to $\bl \theta(\Omega)$ \cite[p. 12]{BDGS}. The following proposition illustrates a significance of a basic polynomial map.
\begin{prop}\label{representative}
Suppose that $\bl f : \Omega_1 \to \Omega_2$ is a proper holomorphic map with the finite pseudorflection group $G$ as the group of deck transformations. 
Then there exists a unique biholomorphic map $\widehat{\bl f} : \bl \theta(\Omega_1) \to \Omega_2$ such that $\bl f=\widehat{\bl f}\circ \bl \theta$, where $\bl \theta: \Omega_1 \to \bl \theta(\Omega_1)$ is a basic polynomial map associated to the complex reflection group $G.$
\end{prop}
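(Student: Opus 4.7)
The plan is to build $\widehat{\bl f}$ componentwise using the fact that the components of $\bl f$ are $G$-invariant, and then upgrade this to a biholomorphism between $\bl\theta(\Omega_1)$ and $\Omega_2$.

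First I would verify that each component $f_j$ of $\bl f=(f_1,\ldots,f_d)$ is $G$-invariant. Indeed, for $\sigma\in G$ the identity $\bl f\circ \sigma = \bl f$ holds on $\Omega_1$ precisely because $G$ consists of deck transformations of $\bl f$, so $f_j(\sigma\cdot\bl z)=f_j(\bl z)$ for every $\sigma\in G$ and $\bl z\in\Omega_1$. This places each $f_j$ in the ring of $G$-invariant holomorphic functions on $\Omega_1$.

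Next, I would invoke the analytic version of the Chevalley--Shephard--Todd theorem established in \cite{BDGS}: every $G$-invariant holomorphic function on $\Omega_1$ factors through the basic polynomial map $\bl\theta$ as a holomorphic function on $\bl\theta(\Omega_1)$. This produces, for each $j$, a holomorphic function $\widehat{f_j}\colon \bl\theta(\Omega_1)\to\mb C$ with $f_j=\widehat{f_j}\circ\bl\theta$. Setting $\widehat{\bl f}:=(\widehat{f_1},\ldots,\widehat{f_d})$ gives a holomorphic map on $\bl\theta(\Omega_1)$ satisfying $\bl f=\widehat{\bl f}\circ\bl\theta$. Since $\bl f(\Omega_1)\subseteq\Omega_2$ and $\bl\theta$ is surjective onto $\bl\theta(\Omega_1)$, the image $\widehat{\bl f}(\bl\theta(\Omega_1))$ lies in $\Omega_2$, and in fact equals $\Omega_2$ because proper holomorphic maps between equidimensional domains are surjective.

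I would then verify $\widehat{\bl f}$ is bijective. Surjectivity is immediate from $\widehat{\bl f}\circ\bl\theta=\bl f$ and surjectivity of $\bl f$. For injectivity, suppose $\widehat{\bl f}(\bl\theta(\bl z_1))=\widehat{\bl f}(\bl\theta(\bl z_2))$. Then $\bl f(\bl z_1)=\bl f(\bl z_2)$, so because $G$ is the deck transformation group of $\bl f$ there exists $\sigma\in G$ with $\bl z_2=\sigma\cdot\bl z_1$; the $G$-invariance of $\bl\theta$ then yields $\bl\theta(\bl z_2)=\bl\theta(\bl z_1)$. A bijective holomorphic map between domains in $\mb C^d$ is automatically biholomorphic (classical result of Osgood), so $\widehat{\bl f}$ is biholomorphic. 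Uniqueness follows from the surjectivity of $\bl\theta\colon\Omega_1\to\bl\theta(\Omega_1)$: any two such factorizations must agree pointwise on $\bl\theta(\Omega_1)$.

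The main obstacle is not a computational one but the conceptual input that the components $f_j$ lift to \emph{holomorphic} functions on $\bl\theta(\Omega_1)$, not merely continuous or meromorphic ones; this is exactly the content of the analytic Chevalley--Shephard--Todd theorem from \cite{BDGS} that the paper relies on, and without it the bare CST theorem only gives factorizations at the level of polynomial rings. Once that tool is in place, the rest of the argument is a straightforward deck-transformation bookkeeping combined with Osgood's theorem.
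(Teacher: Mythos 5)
Your proof is correct and follows essentially the same route as the paper: componentwise $G$-invariance, the analytic Chevalley--Shephard--Todd theorem to produce the holomorphic factorization $\bl f=\widehat{\bl f}\circ\bl\theta$, surjectivity from properness, injectivity via the deck-transformation group, and the classical fact that a bijective holomorphic map between domains in $\mb C^d$ is biholomorphic (the paper cites Rudin's book where you cite Osgood). Your explicit uniqueness remark via surjectivity of $\bl\theta$ is a small addition the paper leaves implicit in the uniqueness clause of the analytic CST theorem.
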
 
We use the analytic version of Chevalley-Shephard-Todd theorem to prove this proposition. Let $\m O(\Omega)$ denote the ring of all holomorphic functions on $\Omega.$
\begin{thm}[Analytic CST Theorem]\label{acst}\cite[p. 12]{BDGS}
Let $G$ be a finite complex reflection group and $\Omega \subseteq \mb C^d$ be a $G$-invariant domain. For any $G$-invariant holomorphic function $f$ on $\Omega,$ there exists a unique $\widehat{f} \in \m O\big(\bl \theta(\Omega)\big)$ such that $f=\widehat{f}\circ \bl \theta.$
\end{thm}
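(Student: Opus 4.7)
The plan is to define $\widehat{f}$ set-theoretically on $\bl\theta(\Omega)$ and then upgrade it to a holomorphic function via a Riemann removable singularity argument across the branch locus of $\bl\theta$. Uniqueness is immediate from Proposition \ref{domain}: since $\bl\theta:\Omega \to \bl\theta(\Omega)$ is surjective, the equation $\widehat{f}\circ \bl\theta = f$ determines $\widehat{f}$ on all of $\bl\theta(\Omega)$, so at most one such $\widehat{f}$ exists.

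For existence, first set $\widehat{f}(\bl\theta(\bl z)) := f(\bl z)$. This is well defined because the fibers of $\bl\theta$ coincide with the $G$-orbits: by Proposition \ref{domain}, $G$ is the group of deck transformations of $\bl\theta$, so $\bl\theta(\bl z_1) = \bl\theta(\bl z_2)$ forces $\bl z_2 = \sigma\cdot\bl z_1$ for some $\sigma\in G$, and then $G$-invariance of $f$ gives $f(\bl z_1) = f(\bl z_2)$. Next, on the complement of the critical set $C = \{\bl z \in \Omega : J_{\bl\theta}(\bl z) = 0\}$, the inverse function theorem says $\bl\theta$ is locally biholomorphic, so near any point $\bl\theta(\bl z_0)$ with $\bl z_0 \notin C$ we may write $\widehat{f} = f\circ \bl\theta^{-1}$ locally, which is holomorphic. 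Hence $\widehat{f}$ is already holomorphic on $\bl\theta(\Omega) \setminus \bl\theta(C)$.

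The main step is to extend $\widehat{f}$ holomorphically across $\bl\theta(C)$. Because $\bl\theta$ is proper (Proposition \ref{domain}), Remmert's proper mapping theorem guarantees that $\bl\theta(C)$ is a proper complex-analytic subset of $\bl\theta(\Omega)$, and so of complex codimension at least one. I would then verify local boundedness of $\widehat{f}$: for a compact neighbourhood $K$ of any point of $\bl\theta(\Omega)$, properness makes $\bl\theta^{-1}(K)$ compact in $\Omega$, on which $f$ is bounded; therefore $|\widehat{f}| \le \sup_{\bl\theta^{-1}(K)}|f|$ on $K$. Riemann's removable singularity theorem for thin analytic subsets then extends $\widehat{f}$ to a holomorphic function on all of $\bl\theta(\Omega)$, and the identity $\widehat{f}\circ \bl\theta = f$ persists on $C$ by continuity.

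The delicate point is controlling $\widehat{f}$ near the branch image, and this is precisely where properness of $\bl\theta$ becomes indispensable; without it, one could not rule out blow-up of $\widehat{f}$ as one approaches $\bl\theta(C)$. Once local boundedness has been secured from properness, the analytic-set structure of $\bl\theta(C)$ provided by Remmert's theorem, combined with Riemann's extension theorem in several complex variables, closes the argument.
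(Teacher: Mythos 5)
Your argument is correct. Note first that the paper itself does not prove this theorem: it is quoted verbatim from \cite[p.\ 12]{BDGS}, so there is no internal proof to compare against. Your route is the standard one and all the pieces fit together: uniqueness from surjectivity of $\bl \theta$; well-definedness of the set-theoretic $\widehat{f}$ from the fact that the fibres of $\bl \theta$ are exactly the $G$-orbits (which is precisely what ``$G$ is the group of deck transformations'' means in this paper, cf.\ Proposition \ref{domain} and the displayed condition defining maps factored by automorphisms); holomorphy off $\bl \theta(C)$ from the inverse function theorem; $\bl \theta(C)$ a thin analytic subset by Remmert (here one should add the one-line remark that $C=\{J_{\bl\theta}=0\}$ is a proper analytic subset of $\Omega$ because $J_{\bl\theta}$ is a nonzero polynomial, cf.\ Corollary \ref{Jac}, so its image has dimension at most $d-1$); local boundedness from properness; and Riemann's extension theorem to conclude. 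The only cosmetic slip is at the end: the identity $\widehat{f}\circ\bl\theta=f$ should be propagated to all of $\bl\theta^{-1}(\bl\theta(C))$, not just to $C$, but this follows from the same continuity/density argument since both sides are holomorphic on $\Omega$ and agree off a thin analytic set. Your closing remark correctly identifies properness as the ingredient that prevents blow-up of $\widehat{f}$ along the branch image; this is indeed where the argument would fail for a non-proper quotient map.
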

A function $\bl f :\Omega_1 \to \Omega_2$ is said to be $G$-invariant if $\bl f(\sigma^{-1} \cdot \bl z) = \bl f(\bl z)$ for all $\sigma \in G, \bl z \in \Omega_1$. Clearly, if $\bl f=(f_1,\ldots,f_d):\Omega_1 \to \Omega_2$ is a $G$-invariant holomorphic map, then each $f_i$ is a $G$-invariant holomorphic map on $\Omega_1$. By analytic CST theorem, we get $f_i=\widehat{f}_i \circ \bl \theta,$ where $\widehat{f}_i \in \m O\big(\bl \theta(\Omega_1)\big).$ That is, $\bl f = \widehat{\bl f} \circ \bl \theta,$ where the function $\widehat{\bl f} = (\widehat{f_1},\ldots,\widehat{f_d}) : \bl \theta(\Omega_1) \to \Omega_2$ is uniquely determined.

\begin{proof}[Proof of Proposition \ref{representative}]
Since the group of deck transformations of $\bl f$ is $G,$ $\bl f$ is $G$-invariant. So there exists a unique holomorphic map $\widehat{\bl f} $ such that $\bl f=\widehat{\bl f}\circ \bl \theta$. In order to show that $\widehat{\bl f} : \bl \theta(\Omega_1) \to \Omega_2$ is biholomorphic, it is enough to prove that the map is bijective. Then from \cite[p. 303, Theorem 15.1.8]{MR2446682}, it follows that $\widehat{\bl f}$ is biholomorphic. 

Being a proper holomorphic map, $\bl f$ is surjective. Since $\bl f= \widehat{\bl f} \circ \bl \theta$, the map $\widehat{\bl f}$ is also surjective. Arguing by contradiction, suppose that $\widehat{\bl f}$ is not injective, then there exist two distinct $\bl z_1, \bl z_2 \in \bl \theta(\Omega_1)$ such that $\widehat{\bl f}(\bl z_1)=\widehat{\bl f}(\bl z_2).$ Suppose that for $\bl w_i \in \Omega_1,$ we have $\bl \theta(\bl w_i)=\bl z_i,$ for $i=1,2.$ That implies 
\Bea
\bl f(\bl w_1)=(\widehat{\bl f} \circ \bl \theta)(\bl w_1) =\widehat{\bl f}(\bl z_1)=\widehat{\bl f}(\bl z_2) = (\widehat{\bl f} \circ \bl \theta)(\bl w_2) = \bl f(\bl w_2).
\Eea
Since $G$ is the group of deck transformations of $\bl f$, there is an element $\sigma \in G$ such that $\sigma(\bl w_1) = \bl w_2.$ Consequently, $G$-invariance of $\bl \theta$ implies that $\bl \theta(\sigma(\bl w_1)) = \bl \theta(\bl w_1)$, that is, $\bl \theta(\bl w_2) = \bl \theta(\bl w_1),$ that is, $\bl z_1=\bl z_2$, which is a contradiction.
\end{proof}
Moreover, the complex jacobian $J_{\bl\theta} = \det\big(\!\!\!\big(\frac{\partial \theta_i}{\partial z_j}\big)\!\!\!\big)_{i,j=1}^d$ has an association with the sign representation of $G.$ A detailed discussion regarding one-dimensional representations of complex reflection groups is provided which consequently establishes the association. 
\subsection{One-dimensional representations of complex reflection groups}
\begin{defn}\rm
A hyperplane $H$ in $\C^d$ is called reflecting if there exists a complex reflection in $G$ acting trivially
on $H$.
\end{defn}
  For a complex reflection $\sigma \in G,$ define $H_{\sigma} := \ker({\rm id} - \sigma).$ By definition, the subspace $H_{\sigma}$ has dimension $d-1$. Clearly, $\sigma$ fixes the hyperplane $H_{\sigma}$ pointwise. Hence each $H_\sigma$ is a reflecting hyperplane.  By definition, $H_\sigma$ is the zero set of a non-zero homogeneous linear polynomial $L_\sigma$ on $\C^d$, determined up to a non-zero constant multiplier, that is, $H_\sigma = \{\bl z\in\C^d: L_\sigma(\bl z) = 0\}$. Moreover, the elements of $G$ acting trivially on a  reflecting hyperplane forms a cyclic subgroup of $G$. 
  
  Let $H_1,\ldots, H_t$ denote the distinct reflecting hyperplanes associated to the group $G$ and  the corresponding cyclic subgroups are $G_1,\ldots, G_t,$ respectively. Suppose $G_i = \langle a_i \rangle$ and the order of each $a_i$ is $m_i$ for $i=1,\ldots,t.$ For every one-dimensional representation $\varrho$ of $G,$ there exists a unique $t$-tuple of non-negative integers $(c_1,\ldots,c_t),$ where $c_i$'s are the least non-negative integers that satisfy the following: \bea\label{ci}\varrho(a_i) =\big( \det(a_i)\big)^{c_i}, \,\, i=1,\ldots,t.\eea The $t$-tuple $(c_1,\ldots,c_t)$ solely depends on the representation $\varrho.$ The character of the one-dimensional representation $\varrho,$ $\chi_\varrho : G \to \mb C^*$ coincides with the representation $\varrho.$ 
The set of polynomials relative to the representation $\varrho$ is given by $$R^G_{\varrho}(\mb C[z_1,\ldots,z_d]) = \{f \in \mb C[z_1,\ldots,z_d] : \sigma(f) = \chi_\varrho(\sigma) f, ~ {\rm for ~~ all~} \sigma \in G\}.$$  The elements of the subspace $R^G_{\varrho}(\mb C[z_1,\ldots,z_d])$ are said to be  $\varrho$-invariant polynomials. Stanley proves a typical property of the elements of $R^G_{\varrho}(\mb C[z_1,\ldots,z_d])$ in \cite[p. 139, Theorem 3.1]{MR460484}.
\begin{lem}\label{gencz}\cite[p. 139, Theorem 3.1]{MR460484}
Suppose that the linear polynomial $\ell_i$ is a defining function of $H_i$ for $i=1,\ldots,t.$ The homogeneous polynomial $\ell_\varrho = \prod_{i=1}^t \ell_i^{c_i }$ is a generator of the module $R^G_{\varrho}(\mb C[z_1,\ldots,z_d])$ over the ring $\mb C[z_1,\ldots,z_d]^G,$ where $c_i$'s are unique non-negative integers as described in Equation \eqref{ci}.
\end{lem}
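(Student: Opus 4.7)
The plan is to establish the two claims constituting the statement: first that $\ell_\varrho := \prod_{i=1}^t \ell_i^{c_i}$ lies in $R^G_\varrho(\mb C[z_1,\ldots,z_d])$, and second that every $\varrho$-invariant polynomial is of the form $p \cdot \ell_\varrho$ with $p \in \mb C[z_1,\ldots,z_d]^G$.

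For the first claim, since $G$ is generated by its pseudoreflections, it suffices to verify $\sigma(\ell_\varrho) = \chi_\varrho(\sigma) \ell_\varrho$ for $\sigma = a_k$, each $k$. Working in a basis adapted to $a_k = \mr{diag}(1,\ldots,1,\det(a_k))$, the form $\ell_k$ is (up to scalar) the last coordinate, so $a_k(\ell_k) = \det(a_k)\,\ell_k$, hence $a_k(\ell_k^{c_k}) = \det(a_k)^{c_k}\ell_k^{c_k} = \chi_\varrho(a_k)\,\ell_k^{c_k}$. It remains to show that the factor $\prod_{i\neq k}\ell_i^{c_i}$ is fixed by $a_k$. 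For $i\neq k$, $a_k$ permutes the reflecting hyperplanes distinct from $H_k$, and the exponents $c_i$ are constant on $G$-orbits of hyperplanes (the stabilizers within an orbit are conjugate cyclic subgroups and $\chi_\varrho$ is a class function). Partition $\{i : i\neq k\}$ into $\langle a_k\rangle$-orbits: on a free orbit of length $m_k$, the product of the scaling factors around the cycle is $1$ because $a_k^{m_k}=e$; on a fixed point, one checks that the scaling factor is $1$ since $a_k$ acts trivially on the defining line of an $a_k$-stable hyperplane other than $H_k$. Thus $a_k\bigl(\prod_{i\neq k}\ell_i^{c_i}\bigr) = \prod_{i\neq k}\ell_i^{c_i}$ and hence $a_k(\ell_\varrho) = \chi_\varrho(a_k)\ell_\varrho$.

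For the second claim, fix $f \in R^G_\varrho(\mb C[z_1,\ldots,z_d])$ and a reflecting hyperplane $H_i$ with $G_i = \langle a_i\rangle$ of order $m_i$ and $\zeta := \det(a_i)$ a primitive $m_i$-th root of unity. In coordinates $(y_1,\ldots,y_{d-1},\ell_i)$ diagonalising $a_i$, expand $f = \sum_{k\geq 0} g_k(\bl y)\,\ell_i^k$. Since $a_i$ fixes each $y_j$ and multiplies $\ell_i$ by $\zeta$, we have $a_i(f) = \sum_k \zeta^k g_k(\bl y)\ell_i^k$, while the relation $a_i(f) = \chi_\varrho(a_i) f = \zeta^{c_i} f$ forces $g_k \equiv 0$ whenever $k\not\equiv c_i\pmod{m_i}$. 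Thus $\ell_i^{c_i}$ divides $f$. Because the $\ell_i$ are pairwise non-associate (they cut out distinct hyperplanes) and $\mb C[z_1,\ldots,z_d]$ is a UFD, the product $\ell_\varrho = \prod_i \ell_i^{c_i}$ divides $f$. Writing $f = h\,\ell_\varrho$, the $\varrho$-invariance of $f$ combined with the first step yields $\chi_\varrho(\sigma)\,h\,\ell_\varrho = \sigma(f) = \sigma(h)\chi_\varrho(\sigma)\ell_\varrho$ for every $\sigma\in G$, so $\sigma(h) = h$ and $h \in \mb C[z_1,\ldots,z_d]^G$, proving that $\ell_\varrho$ generates the module.

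The main obstacle is the permutation bookkeeping in the first step, namely verifying that the product of the scaling factors incurred by $a_k$ acting on $\prod_{i\neq k}\ell_i^{c_i}$ is exactly $1$. I would attack it by the orbit decomposition sketched above, using that the constancy of $c_i$ on $G$-orbits lets all scalars around any $\langle a_k\rangle$-orbit combine into $(\lambda_1\cdots\lambda_{m_k})^{c_i}$ for an appropriate orbit exponent $c_i$, which equals $1$ by the identity $a_k^{m_k}=e$. The divisibility argument in the second step, by contrast, is essentially formal once suitable coordinates are fixed.
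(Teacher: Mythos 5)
The paper itself offers no proof of this lemma --- it is quoted verbatim from Stanley \cite[Theorem 3.1]{MR460484} --- so your argument has to be judged on its own terms; your overall strategy (check $\varrho$-invariance of $\ell_\varrho$ on the generators $a_k$, then get divisibility of any $f\in R^G_\varrho(\mb C[z_1,\ldots,z_d])$ by each $\ell_i^{c_i}$ from the expansion $f=\sum_k g_k(\bl y)\ell_i^k$ in eigencoordinates) is sound and is essentially the standard one. The second half is complete: the congruence $k\equiv c_i\pmod{m_i}$ together with the minimality of $c_i$ gives $\ell_i^{c_i}\mid f$; the $\ell_i$ are pairwise non-associate irreducibles in the UFD $\mb C[z_1,\ldots,z_d]$, so $\ell_\varrho\mid f$; and cancelling $\ell_\varrho$ from $\sigma(f)=\chi_\varrho(\sigma)f$ shows the cofactor is $G$-invariant. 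Your observation that $c_i$ depends only on the $G$-orbit of $H_i$ is also correct, since $\varrho$ and $\det$ are class functions and the least residue does not depend on the chosen generator of $G_i$.

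The one genuine gap is in the orbit bookkeeping of the first step. You partition $\{i:i\ne k\}$ into $\langle a_k\rangle$-orbits and treat only two cases, free orbits of length $m_k$ and fixed points; but a cyclic group of order $m_k$ can have orbits of any length $r$ dividing $m_k$, and for $1<r<m_k$ your argument as written only yields $(\mu_{i_1}\cdots\mu_{i_r})^{m_k/r}=1$ (where $a_k(\ell_{i_j})=\mu_{i_j}\ell_{i_{j+1}}$ around the cycle), i.e.\ the product of the scalars is an $(m_k/r)$-th root of unity, not necessarily $1$. The repair is the same eigenvector argument you use for fixed points, applied to $a_k^r$ rather than $a_k$: for $r<m_k$ the element $a_k^r$ is a nontrivial pseudoreflection in $G_k$, and its action on linear forms splits $(\mb C^d)^*$ into the hyperplane of forms vanishing on the $\det(a_k)$-eigenline of $a_k$ (eigenvalue $1$) and the line $\mb C\ell_k$ (eigenvalue $\det(a_k)^r\ne 1$); since $a_k^r$ stabilizes $H_{i_1}\ne H_k$, the form $\ell_{i_1}$ is an eigenvector not proportional to $\ell_k$, hence is fixed by $a_k^r$, which forces $\mu_{i_1}\cdots\mu_{i_r}=1$. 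With that case included, every orbit contributes $1$, so $a_k$ fixes $\prod_{i\ne k}\ell_i^{c_i}$ and the first step closes.
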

In particular, the sign representation of a finite complex reflection group $G$, described in Equation \eqref{sign}, is given by ${\rm sgn} (a_i) = \big(\det(a_i)\big)^{m_i-1}$, $i=1,\ldots,t,$ \cite[p. 139, Remark (1)]{MR460484} and it has the following property. 

\begin{cor}\cite[p. 616, Lemma]{MR117285}\label{Jac}
Let $H_1,\ldots, H_t$ denote the distinct reflecting hyperplanes associated to the group $G$ and let $m_1,\ldots, m_t$ be the orders of the corresponding cyclic subgroups $G_1,\ldots, G_t,$ respectively. Suppose that the linear polynomial $\ell_i$ is a defining function of $H_i$ for $i=1,\ldots,t.$ Then for a non-zero constant $c$, 
\Bea
J_{\bl \theta} (\bl z) = c \prod_{i=1}^t \ell_i^{m_i -1 }(\bl z) = \ell_{\rm sgn}(\bl z).
\Eea
Consequently, $J_{\bl \theta}$ is a basis of the module $R^G_{{\rm sgn}}(\mb C[z_1,\ldots,z_d])$ over the ring $\mb C[z_1,\ldots,z_d]^G$.
\end{cor}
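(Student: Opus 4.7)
The strategy is to show first that $J_{\bl\theta}$ is a non-zero $\chi_{\rm sgn}$-relative invariant, then use Lemma \ref{gencz} to factor out $\ell_{\rm sgn} = \prod_{i=1}^t \ell_i^{m_i - 1}$, and finally compare homogeneous degrees to force the $G$-invariant cofactor to be a non-zero constant.

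First I would establish the transformation rule $\sigma(J_{\bl\theta}) = \chi_{\rm sgn}(\sigma)\, J_{\bl\theta}$ for every $\sigma \in G$. Since each component $\theta_j$ is $G$-invariant and the action on functions of the paper unwinds to $\sigma(f)(\bl z) = f(\sigma \bl z)$ (matrix multiplication), the identity $\bl\theta \circ \sigma = \bl\theta$ holds on $\mb C^d$. Differentiating and taking determinants yields
\[
J_{\bl\theta}(\sigma \bl z)\,\det(\sigma) = J_{\bl\theta}(\bl z),
\]
i.e.\ $\sigma(J_{\bl\theta})(\bl z) = (\det\sigma)^{-1} J_{\bl\theta}(\bl z) = \chi_{\rm sgn}(\sigma)\, J_{\bl\theta}(\bl z)$. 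Hence $J_{\bl\theta}\in R^G_{\rm sgn}(\mb C[z_1,\ldots,z_d])$, and by Lemma \ref{gencz} there exists a $G$-invariant polynomial $P$ with $J_{\bl\theta} = P \cdot \ell_{\rm sgn}$.

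To finish, I would match degrees. Writing $d_j := \deg \theta_j$, the polynomial $J_{\bl\theta}$ is homogeneous of degree $\sum_{j=1}^d (d_j - 1)$, while $\ell_{\rm sgn}$ has degree $\sum_{i=1}^t (m_i - 1)$, which is exactly the total number of pseudoreflections in $G$. Invoking the classical Shephard–Todd identity $\sum_{j=1}^d (d_j - 1) = \sum_{i=1}^t (m_i - 1)$ forces $P$ to be homogeneous of degree zero, hence a scalar. Since $\bl\theta : \Omega \to \bl\theta(\Omega)$ is a proper holomorphic map by Proposition \ref{domain}, $J_{\bl\theta}$ cannot vanish identically; therefore $c := P$ is non-zero. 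The final statement that $J_{\bl\theta}$ is a basis of $R^G_{\rm sgn}(\mb C[z_1,\ldots,z_d])$ over $\mb C[z_1,\ldots,z_d]^G$ is then immediate from Lemma \ref{gencz}.

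The main obstacle in this plan is the degree-counting identity $\sum_{j} (d_j - 1) = \sum_{i} (m_i - 1)$ relating the degrees of the basic invariants to the orders of the cyclic stabilizers of the reflecting hyperplanes. While classical, this identity is non-trivial and typically established either by a Molien–Poincar\'e series computation on $\mb C[z_1,\ldots,z_d]$ as a free module over $\mb C[z_1,\ldots,z_d]^G$, or via Shephard–Todd's direct enumeration of pseudoreflections; in a short write-up I would simply cite it and move on.
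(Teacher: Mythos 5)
Your argument is correct. Note that the paper itself supplies no proof of this corollary --- it is quoted directly from Steinberg \cite[p.~616, Lemma]{MR117285} --- so there is nothing internal to compare against; what you have written is essentially the classical argument. Your three steps all check out: the identity $\bl\theta\circ\sigma=\bl\theta$ does differentiate to $J_{\bl\theta}(\sigma\bl z)\det(\sigma)=J_{\bl\theta}(\bl z)$, which under the paper's convention $\sigma(f)(\bl z)=f(\sigma\bl z)$ and Equation \eqref{mu} says exactly that $J_{\bl\theta}\in R^G_{\rm sgn}(\mb C[z_1,\ldots,z_d])$; Lemma \ref{gencz} (with $c_i=m_i-1$, as recorded just above the corollary) then gives $J_{\bl\theta}=P\cdot\ell_{\rm sgn}$ with $P$ invariant; and the homogeneity of each Leibniz term of the Jacobian determinant makes the degree comparison legitimate. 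The one external input, $\sum_j(\deg\theta_j-1)=\sum_i(m_i-1)$, is correctly identified as the number-of-pseudoreflections identity and is reasonable to cite. Two cosmetic remarks: the non-vanishing of $J_{\bl\theta}$ is more directly the Jacobian criterion for the algebraic independence of $\theta_1,\ldots,\theta_d$ guaranteed by the CST theorem (your properness argument via Proposition \ref{domain} also works but is heavier than needed); and where Steinberg proves the divisibility of $J_{\bl\theta}$ by each $\ell_i^{m_i-1}$ directly by a local computation, you outsource it to Stanley's theorem, which is a legitimate shortcut given that Lemma \ref{gencz} is already available in the paper.
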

Clearly, the character of the sign representation is given by $\chi_{{\rm sgn}} : G \to \C^*$
\bea
\label{mu}\chi_{\rm sgn}(\sigma) = \big(\det(\sigma)\big)^{-1},\,\, \sigma\in G.
\eea 
Now, for any one-dimensional representation $\varrho$ of $G,$ we generalize the notion of $\varrho$-invariance for analytic Hilbert modules. 
\begin{defn} \rm
We recall the following definitions from  \cite{MR1988884} and \cite{MR1028546}.
 \begin{enumerate} 
    \item[1.] A Hilbert space $\mathcal H$ is said to be a \emph{Hilbert module} over an algebra $\m A$ if the map $(f,h) \mapsto T_f(h):=f\cdot h,$ $f\in \m A, h\in \mathcal H,$ defines an algebra homomorphism $f \mapsto T_f$ of $\m A$ into $\m L(\m H).$ 
    \item[2.] A Hilbert module $\mathcal H$ over $\mb C[z_1,\ldots,z_d]$ is said to be an \emph{analytic Hilbert module} if
\begin{enumerate}
\item the Hilbert space $\mathcal H$ consists of holomorphic functions on some bounded domain $\Omega\subseteq \C^d,$
\item $\mb C[z_1,\ldots,z_d]\subseteq\mathcal H$ is dense in $\mathcal H$ and
\item $\mathcal H$ possesses a reproducing kernel on $\Omega.$
\end{enumerate}
The module action in an analytic Hilbert module is given by pointwise multiplication, that is, ${\mathfrak m}_p(h)(\bl z) = p(\bl z) {h}(\bl z),\,h \in \m H,\, \bl z\in \Omega.$ 
\end{enumerate}

For example, a weighted Bergman space on some bounded domain $\Omega \subseteq \mb C^d$ is an analytic Hilbert module over $\mb C[z_1,\ldots,z_d]$ on $\Omega.$
\end{defn}
Let $\m H$ be an analytic Hilbert module  consisting holomorphic functions on $\Omega$, where $\Omega$ is a $G$-invariant domain. We also assume that the reproducing kernel $K$ of $\m H$ is $G$-invariant, that is, 
\Bea
K(\sigma \cdot \bl z, \sigma \cdot \bl w)= K(\bl z, \bl w)\,\, \text{for all}\,\, \sigma \in G.
\Eea
For a one-dimensional representation $\varrho$ of $G,$ recall that the relative invariant subspace of $\m H,$ defined by
\bea\label{insub}
R^G_{\varrho}(\m H) = \{f \in \m H : \sigma(f)  = \chi_\varrho(\sigma) f ~ {\rm for ~~ all~} \sigma \in G \}.
\eea
The elements of the subspace $R^G_{\varrho}(\m H)$ are said to be  $\varrho$-invariant. 
A result analogous to Lemma \ref{gencz}  is proved for  $R^G_{\varrho}(\m H)$ in the next lemma. 

\begin{lem}\label{quo}
Let $\m H\subseteq \m O(\Omega)$ be an analytic Hilbert module and  $f \in R_\varrho^G(\m H).$ Then $\ell_\varrho$ divides $f$ and the quotient $\frac{f}{\ell_\varrho}$ is a $G$-invariant holomorphic function on $\Omega,$ where $\ell_\varrho$ is as in Lemma \ref{gencz}.
\end{lem}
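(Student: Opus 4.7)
The plan is to reduce divisibility to a local vanishing-order computation on each reflecting hyperplane, then promote the quotient to a $G$-invariant function by comparing the transformation laws of $f$ and $\ell_\varrho$.

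First I would analyze the local behavior of $f$ near a generic point of each reflecting hyperplane $H_i$. Recall that $G_i = \langle a_i \rangle$ has order $m_i$ and the pseudoreflection $a_i$ has eigenvalue $1$ with multiplicity $d-1$ on $H_i$ and the single non-trivial eigenvalue $\zeta := \det(a_i)$, a primitive $m_i$-th root of unity, on the transverse eigenline. Choose linear coordinates $(w_1,\bl w')$ on $\mb C^d$ so that $w_1 = \ell_i$ is a defining linear form for $H_i$ and $a_i$ acts diagonally as $(w_1,\bl w')\mapsto(\zeta w_1,\bl w')$. Since $f\in\m H\subseteq\m O(\Omega)$ is holomorphic, in these coordinates it admits a local expansion
\begin{equation*}
f(\bl z)=\sum_{k\ge 0} a_k(\bl w')\,w_1^k.
\end{equation*}
The hypothesis $\sigma(f)=\chi_\varrho(\sigma)f$ applied to $\sigma=a_i$, together with Equation \eqref{ci}, gives $f(a_i\bl z)=\zeta^{c_i}f(\bl z)$, i.e.\ $\zeta^k a_k(\bl w')=\zeta^{c_i}a_k(\bl w')$ for every $k$. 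Since $0\le c_i<m_i$ and $\zeta$ is a primitive $m_i$-th root of unity, this forces $a_k\equiv 0$ whenever $k<c_i$. Hence $f$ vanishes to order at least $c_i$ along $H_i$, and $\ell_i^{c_i}$ divides $f$ locally at every point of $H_i$.

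Next I would globalize this to conclude $\ell_i^{c_i}\mid f$ in $\m O(\Omega)$. On $\Omega\setminus\bigcup_j H_j$ the quotient $f/\ell_i^{c_i}$ is holomorphic; the local vanishing estimate extends it holomorphically across the smooth irreducible hypersurface $H_i\cap\Omega$ (Riemann's removable singularity theorem applied in the transverse coordinate), and away from the other hyperplanes $H_j$ this identifies a holomorphic extension. Since the $H_j$ for $j\neq i$ have complex codimension one and the extension is locally bounded by the $\ell_i$-order condition, a further application of the Riemann extension theorem (or the fact that $\m O(\Omega)$ enjoys unique factorization on polydiscs, and the $\ell_j$ are pairwise non-proportional linear forms and hence pairwise coprime) yields that each factor $\ell_i^{c_i}$ divides $f$ globally, and the factors combine to give $\ell_\varrho=\prod_{i=1}^t\ell_i^{c_i}\mid f$ in $\m O(\Omega)$.

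Write $f=\ell_\varrho\, g$ with $g\in\m O(\Omega\setminus Z)$ where $Z=\{\ell_\varrho=0\}$; by the previous paragraph $g$ extends to $g\in\m O(\Omega)$. For $G$-invariance of $g$, I would combine Lemma \ref{gencz}, which gives $\sigma(\ell_\varrho)=\chi_\varrho(\sigma)\ell_\varrho$ (as $\ell_\varrho\in R^G_\varrho(\mb C[z_1,\ldots,z_d])$), with the hypothesis $\sigma(f)=\chi_\varrho(\sigma)f$. Since $\sigma$ acts multiplicatively on holomorphic functions,
\begin{equation*}
\chi_\varrho(\sigma)\,\ell_\varrho\, g \;=\; \sigma(f)\;=\;\sigma(\ell_\varrho)\,\sigma(g)\;=\;\chi_\varrho(\sigma)\,\ell_\varrho\,\sigma(g),
\end{equation*}
so $\sigma(g)=g$ on the dense open set $\Omega\setminus Z$, and hence on all of $\Omega$ by continuity. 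This yields $g\in\m O(\Omega)^G$, as required.

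The main obstacle I anticipate is the careful passage from pointwise vanishing orders on each $H_i$ to a global divisibility $\ell_\varrho\mid f$ in $\m O(\Omega)$; this hinges on the fact that the $\ell_i$ are pairwise coprime linear forms defining distinct irreducible smooth hypersurfaces, so that the Riemann extension theorem applied successively (and unique factorization in local rings of $\m O(\Omega)$) legitimately assembles the individual divisibilities $\ell_i^{c_i}\mid f$ into divisibility by their product. Once this is in hand, the $G$-invariance of $g$ is a short algebraic manipulation using Lemma \ref{gencz}.
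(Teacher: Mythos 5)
Your proposal is correct and follows essentially the same route as the paper: a local eigenvalue/vanishing-order computation in linear coordinates adapted to each reflecting hyperplane $H_i$ (forcing $\ell_i^{c_i}$ to divide $f$), patching to a global divisibility by $\ell_\varrho=\prod_i\ell_i^{c_i}$, and then deducing $G$-invariance of the quotient from the fact that both $f$ and $\ell_\varrho$ transform by the same character $\chi_\varrho$. The only cosmetic difference is that the paper globalizes the local divisibility via the Weierstrass Division Theorem and irreducibility of $\ell_i$, while you invoke the Riemann extension theorem and coprimality of the linear forms; both are standard and equivalent here.
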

\begin{proof}
For a fixed $1 \leq i \leq t,$ suppose that the reflecting hyperplane $H_i$ is fixed by the cyclic subgroup $G_i$ and the complex reflection $a_i$ generates $G_i.$ Using a linear change of coordinates in $\Omega,$ we consider a new coordinate system $y_1 = \ell_i, y_2=x_2,\ldots,y_d=x_d.$ In this new coordinate system, we have $a_i = {\rm diag}(\omega_i ,1, \ldots, 1),$ where $\omega_i$ is a primitive $m_i$-th root of unity. Then, 
\Bea
f(a_i^{-1} \cdot (y_1,\ldots,y_d)) &=& \chi_\varrho(a_i) f(y_1,\ldots,y_d)\\
f(\omega_i y_1, y_2, \ldots, y_d) &=& \big({\det}(a_i) \big)^{c_i} f(y_1,\ldots,y_d)\\
&=& \omega_i^{c_i} f(y_1,\ldots,y_d).
\Eea
Therefore, $f(y_1,\ldots,y_d)$ is divisible by $y_1^{c_i}.$ Now changing the coordinates, we conclude that $f(x_1,\ldots,x_d)$ is divisible by $\ell_i^{c_i}.$ 

If $\bl z\notin H_i\cap \Omega$, then $\ell_i(\bl z)\neq 0$ and this is true in a neighbourhood of $\bl z.$ On the other hand, if $H_i\cap \Omega$ is non-empty and $\bl z\in H_i\cap \Omega,$ then it follows from  Weierstrass Division Theorem \cite[p. 11]{MR1288523} that $\ell_i$ divides $f$ in a neighbourhood of $\bl z$ since $\ell_i$ is irreducible. As the holomorphic functions which are obtained locally, patch to give a global holomorphic function, since both $f$ and $\ell_i$ are defined on all of $\Omega$, it follows that $\ell_i$ divides $f$ in $\mathcal O(\Omega).$  Repetitive application of this argument gives us that the quotient function $\frac{f}{\ell_i^{c_i}}$ is in $\m O(\Omega)$. These steps can be repeated for all distinct hyperplanes associated to the group $G$. Thus one gets that $\ell_\varrho$ divides $f$ and the quotient function is holomorphic. Moreover, $\ell_\varrho$ is $\varrho$-invariant from \cite[p. 138, Theorem 2.3]{MR460484}. The quotient of a $\varrho$-invariant function by a $\varrho$-invariant function is clearly $G$-invariant. Hence $\frac{f}{\ell_\varrho}$ is $G$-invariant. 
\end{proof}
\begin{rem}\rm\label{expression} Let $f \in R_\varrho^G(\m H).$ Combining Theorem \ref{acst} and Lemma \ref{quo}, we write $\frac{f}{\ell_\varrho} =  \widehat{f} \circ \bl \theta$ for a unique $\widehat{f} \in \mathcal O\big(\bl \theta(\Omega)\big).$ 
  That is, $f =(\widehat{f} \circ \bl \theta)\ell_\varrho.$ 
  In other words, $R^G_{\varrho}(\m H) \subseteq \{ (\widehat{f} \circ \bl \theta)\ell_\varrho\mid \widehat{f} \in \mathcal O\big(\bl \theta(\Omega)\big)\}.$ 
\end{rem}

\subsection{Orthogonal projections}
Each one-dimensional representation $\varrho$ of $G$ induces the unique linear operator $\mb P_\varrho : \m H \to \m H$ given by \bea\label{projmu}
\mb P_\varrho \phi = \frac{1}{|G|}\sum_{\sigma \in G} \chi_\varrho(\sigma^{-1}) ~ \phi \circ \sigma^{-1}, \, \, \, \phi \in \m H,
\eea
 where $\chi_\varrho$ is the character of the representation $\varrho$ and $|G|$ denotes the order of the group $G$.
\begin{lem}\label{ortho}
Let $G$ be a finite pseudoreflecion group and $\Omega$ be a $G$-invariant domain in $\C^d.$ If $\m H \subseteq \m O(\Omega)$ is an analytic Hilbert module with the $G$-invariant reproducing kernel, the operator $\mb P_\varrho: \m H \to \m H$ is the orthogonal projection onto the subspace $R^G_{\varrho}(\m H).$
\end{lem}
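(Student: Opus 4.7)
The plan is to verify the three defining properties of an orthogonal projection: that $\mb P_\varrho$ is idempotent, self-adjoint, and has image exactly $R^G_\varrho(\m H)$. The one ingredient that uses the hypothesis on $\m H$ (and not just the abstract character identities from finite group theory) is self-adjointness, which rests on the unitarity of the $G$-action on $\m H$ coming from the $G$-invariance of the reproducing kernel $K$.

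\textbf{Step 1 (unitarity of the group action).} For each $\sigma \in G$, define $U_\sigma : \m H \to \m H$ by $U_\sigma \phi = \phi \circ \sigma^{-1}$. I would first check that $U_\sigma K(\cdot, \bl w) = K(\cdot, \sigma \cdot \bl w)$ as a direct consequence of $K(\sigma \cdot \bl z, \sigma \cdot \bl w) = K(\bl z, \bl w)$. Computing $\langle U_\sigma K(\cdot, \bl w_1), U_\sigma K(\cdot, \bl w_2)\rangle = K(\sigma \cdot \bl w_2, \sigma \cdot \bl w_1) = \langle K(\cdot, \bl w_1), K(\cdot, \bl w_2)\rangle$ shows that $U_\sigma$ is an isometry on the dense subspace spanned by kernel functions, hence extends to a unitary on $\m H$ with $U_\sigma^{-1} = U_{\sigma^{-1}} = U_\sigma^*$.

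\textbf{Step 2 (self-adjointness and idempotence).} Writing $\mb P_\varrho = |G|^{-1}\sum_\sigma \chi_\varrho(\sigma^{-1}) U_\sigma$, I would take the adjoint using Step 1 and use that $\chi_\varrho(\sigma)$ is a root of unity (so $\overline{\chi_\varrho(\sigma^{-1})} = \chi_\varrho(\sigma)$), together with the substitution $\sigma \leftrightarrow \sigma^{-1}$, to obtain $\mb P_\varrho^* = \mb P_\varrho$. For idempotence I would use $U_\sigma U_\tau = U_{\sigma\tau}$ and the character identity $\chi_\varrho(\sigma^{-1})\chi_\varrho(\tau^{-1}) = \chi_\varrho((\sigma\tau)^{-1})$ (valid since $\varrho$ is one-dimensional), changing variables $\rho = \sigma\tau$ to collapse one sum and conclude $\mb P_\varrho^2 = \mb P_\varrho$.

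\textbf{Step 3 (identification of the range).} If $\phi \in R^G_\varrho(\m H)$, then $U_\sigma \phi = \chi_\varrho(\sigma)\phi$ by \eqref{insub}, so $\mb P_\varrho \phi = |G|^{-1}\sum_\sigma \chi_\varrho(\sigma^{-1})\chi_\varrho(\sigma)\phi = \phi$. Conversely, for arbitrary $\phi \in \m H$ and $\tau \in G$, I would compute
\[
\tau(\mb P_\varrho \phi) = U_\tau \mb P_\varrho \phi = \frac{1}{|G|}\sum_{\sigma \in G}\chi_\varrho(\sigma^{-1})\,\phi \circ (\tau\sigma)^{-1}
\]
and, after the substitution $\rho = \tau\sigma$ and using $\chi_\varrho(\sigma^{-1}) = \chi_\varrho(\tau)\chi_\varrho(\rho^{-1})$, conclude $\tau(\mb P_\varrho \phi) = \chi_\varrho(\tau)\mb P_\varrho \phi$, so $\mb P_\varrho \phi \in R^G_\varrho(\m H)$.

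Combining Steps 1--3 shows $\mb P_\varrho$ is a self-adjoint idempotent with image $R^G_\varrho(\m H)$, hence the desired orthogonal projection. The one point that could be delicate is Step 1: establishing unitarity of $U_\sigma$ on all of $\m H$ (not just the polynomial subring, where it is obvious from the change-of-variables formula for the weighted integral) requires the reproducing kernel argument, which is why the $G$-invariance of $K$ is assumed in the hypothesis.
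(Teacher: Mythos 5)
Your proof is correct and follows essentially the same route as the paper: the range identification in your Step 3 is the computation the paper carries out verbatim, while your Steps 1--2 (unitarity of each $U_\sigma$ deduced from the $G$-invariance of $K$, hence self-adjointness and idempotence of $\mb P_\varrho$) simply supply the details of the part the paper delegates to a citation of \cite[p.~15]{BDGS}. Nothing is missing.
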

\begin{proof}
The reproducing kernel of the analytic Hilbert module $\m H$ is $G$-invariant. So $\mb P_\varrho$ is an orthogonal projection on $\m H$ by \cite[p. 15]{BDGS}. It remains to show that  $\mb P_\varrho \m H =  R^G_{\varrho}(\m H).$
 For any $\tau \in G,$ $\phi \in \mb P_\varrho \m H,$ \Bea \tau(\phi) = \tau(\mb P_\varrho \phi) &=& \frac{1}{|G|} \sum_{\sigma \in G} \chi_\varrho(\sigma^{-1}) ~ \phi \circ \sigma^{-1}\tau^{-1} \\ &=& \frac{1}{|G|} \sum_{\eta \in G} \chi_\varrho(\eta^{-1}\tau ) ~ \phi \circ \eta^{-1} \,\, (\text{~taking~}\eta = \tau\sigma)\\ &=& \chi_\varrho(\tau) ~ \phi. \Eea
Hence $\phi$ is in $R^G_{\varrho}(\m H).$

To prove $R^G_{\varrho}(\m H) \subseteq \mb P_\varrho \m H,$ consider $\phi \in R^G_{\varrho}(\m H).$ Then we have
\Bea \mb P_\varrho (\phi) = \frac{1}{|G|} \sum_{\sigma \in G}  \chi_\varrho(\sigma^{-1}) ~ \phi \circ \sigma^{-1} &=& \frac{1}{|G|} \sum_{\sigma \in G}  \chi_\varrho(\sigma^{-1}) \chi_\varrho(\sigma) ~ \phi  \\ &=& \frac{1}{|G|} \sum_{\sigma \in G} \phi = \phi. \Eea This completes the proof.
\end{proof}
The group action on the set of all complex-valued function in Introduction is known as the left regular representation. The left regular representation of $G$ admits the isotypic decomposition $\oplus_{\varrho \in \widehat{G}} n_\varrho \varrho,$ where the positive integer $n_\varrho = \chi_{\varrho}({\rm id})$ is the multiplicity of $\varrho$ in the left regular representation of $G$ and $\widehat{G}$ is the set of equivalence classes of irreducible representations of $G.$ The linear map $\mb P_\varrho$ is the orthogonal projection onto the isotypic component associated to the irreducible representation $\varrho$ in the decomposition of the left regular representation on $\m H.$ 
\begin{thm}\cite{BDGS}\label{lula}
An analytic Hilbert module $\m H$ on a $G$-space $\Omega$ with $G$-invariant kernel admits an orthogonal decomposition $\m H = \oplus_{\varrho \in \widehat{G}} \mb P_\varrho \m H$ and each $\mb P_\varrho \m H$ is isometrically isomorphic to a reproducing kernel Hilbert space $\m H_\varrho \subseteq \m O(\bl \theta(\Omega)) \otimes \mb C^{n_\varrho^2},$ where $n_\varrho = \chi_\varrho({\rm id}).$
\end{thm}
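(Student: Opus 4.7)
The plan is to prove the theorem in two stages: first the orthogonal decomposition, and then the identification of each summand $\mb P_\varrho \m H$. For the decomposition, I would first observe that the $G$-invariance of the reproducing kernel $K$ forces the operators $U_\sigma : \m H \to \m H$ defined by $(U_\sigma f)(\bl z) = f(\sigma^{-1}\cdot \bl z)$ to be unitary. One verifies this on the dense subspace spanned by kernel sections $\{K(\cdot, \bl w)\}$, where unitarity reduces to the identity $K(\sigma \cdot \bl z, \sigma \cdot \bl w) = K(\bl z, \bl w)$. Thus $\sigma \mapsto U_\sigma$ is a unitary representation of $G$ on $\m H$, and the standard isotypic decomposition for unitary representations of finite groups gives $\m H = \oplus_{\varrho \in \widehat{G}} \mb P_\varrho \m H$, where $\mb P_\varrho$ is the orthogonal projection onto the $\varrho$-isotypic component. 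This coincides with Equation \eqref{projmu} when $\varrho$ is one-dimensional (Lemma \ref{ortho}) and is given in general by the Peter--Weyl formula $\mb P_\varrho = \frac{n_\varrho}{|G|} \sum_{\sigma \in G} \overline{\chi_\varrho(\sigma)}\, U_\sigma$.

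For the identification of $\mb P_\varrho \m H$, I would start with the case of one-dimensional $\varrho$, where $\mb P_\varrho \m H = R^G_\varrho(\m H)$. By Remark \ref{expression}, each $f \in R^G_\varrho(\m H)$ may be written uniquely as $f = \ell_\varrho \cdot (\widehat{f} \circ \bl \theta)$ with $\widehat{f} \in \m O(\bl \theta(\Omega))$, so the assignment $f \mapsto \widehat{f}$ is a well-defined linear bijection onto a subspace $\m H_\varrho \subseteq \m O(\bl \theta(\Omega))$; transporting the Hilbert norm of $\m H$ along this bijection makes the map isometric, and $\m H_\varrho$ inherits a reproducing kernel structure, giving the theorem when $n_\varrho = 1$. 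For higher-dimensional $\varrho$, I would invoke Stanley's theorem \cite[Theorem 3.1]{MR460484}: the $\varrho$-isotypic component of $\mb C[z_1,\ldots,z_d]$ is a free module over $\mb C[z_1,\ldots,z_d]^G$ of rank $n_\varrho^2$, with a homogeneous basis $\{p_j\}_{j=1}^{n_\varrho^2}$. A local Weierstrass-division argument, in the spirit of Lemma \ref{quo}, promotes this to the analytic setting: every $f \in \mb P_\varrho \m H$ admits a unique representation $f = \sum_{j=1}^{n_\varrho^2} p_j \,(\widehat{g_j} \circ \bl \theta)$ with $\widehat{g_j} \in \m O(\bl \theta(\Omega))$. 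The map $f \mapsto (\widehat{g_1}, \ldots, \widehat{g_{n_\varrho^2}})$ is the desired identification onto $\m H_\varrho \subseteq \m O(\bl \theta(\Omega)) \otimes \mb C^{n_\varrho^2}$, endowed with the transferred norm.

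The main obstacle will be the higher-dimensional case, specifically the analytic promotion of Stanley's polynomial-module decomposition. The analytic CST theorem (Theorem \ref{acst}) only handles the trivial isotypic component. Extending it to an arbitrary $\varrho$-isotypic component requires showing that the polynomial basis $\{p_j\}$ serves as a free $\m O(\bl \theta(\Omega))$-module basis for the $\varrho$-isotypic subspace of $\m O(\Omega)$ under the substitution $\widehat{g_j} \mapsto \widehat{g_j} \circ \bl \theta$. Existence should follow from applying the projector $\mb P_\varrho$ to the Taylor expansion of an arbitrary $f \in \mb P_\varrho\m H$ and invoking the Weierstrass Division Theorem near the reflecting hyperplanes, exactly as in Lemma \ref{quo}; uniqueness follows from the polynomial-level freeness together with the density of $\mb C[z_1,\ldots,z_d]$ in $\m H$. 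Verifying that the transferred norm on $\m H_\varrho$ yields a reproducing kernel Hilbert space is a secondary issue that is automatic from the isometric bijection with $\mb P_\varrho \m H$.
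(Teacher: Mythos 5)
The paper does not actually prove Theorem \ref{lula}; it is imported verbatim from \cite{BDGS}, so there is no in-paper argument to compare against. Judged on its own terms, your first stage is fine: $G$-invariance of the kernel does make $\sigma \mapsto U_\sigma$ a unitary representation (checked on kernel sections), and the Peter--Weyl projections $\mb P_\varrho = \frac{n_\varrho}{|G|}\sum_{\sigma}\overline{\chi_\varrho(\sigma)}U_\sigma$ give the orthogonal isotypic decomposition; this is consistent with Equation \eqref{projmu} and Lemma \ref{ortho} in the one-dimensional case. Your identification of $\mb P_\varrho\m H$ for one-dimensional $\varrho$ via Remark \ref{expression} is also correct and is exactly the mechanism the paper uses in Lemma \ref{equiv}.

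The gap is in the higher-dimensional case, which is the only part of the statement that goes beyond what the paper already establishes (the exponent $n_\varrho^2$ is the whole point). Two issues. First, the rank-$n_\varrho^2$ freeness of the $\varrho$-isotypic component of $\mb C[z_1,\ldots,z_d]$ over $\mb C[z_1,\ldots,z_d]^G$ is not Stanley's Theorem 3.1, which treats only one-dimensional characters (rank $1$); it follows instead from Chevalley's theorem that the coinvariant algebra carries the regular representation, so that $\varrho$ occurs with multiplicity $n_\varrho$ and each copy is $n_\varrho$-dimensional. Second, and more seriously, your proposed analytic promotion --- ``apply $\mb P_\varrho$ to the Taylor expansion and invoke Weierstrass division near the reflecting hyperplanes, exactly as in Lemma \ref{quo}'' --- does not transfer. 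Lemma \ref{quo} works because for a one-dimensional character the single generator $\ell_\varrho=\prod_i\ell_i^{c_i}$ is a product of the linear forms cutting out the reflecting hyperplanes, so membership in $R^G_\varrho(\m H)$ reduces to a local divisibility statement by an irreducible linear factor. For $n_\varrho>1$ the basis $\{p_j\}_{j=1}^{n_\varrho^2}$ consists of homogeneous polynomials with no such factorization, and the assertion to be proved is not divisibility by any single $p_j$ but the existence and uniqueness of a full $n_\varrho^2$-tuple $(\widehat g_j)$ with $f=\sum_j p_j(\widehat g_j\circ\bl\theta)$ --- a freeness statement for $\m O(\Omega)$ as a module over $\bl\theta^*\m O(\bl\theta(\Omega))$. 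That is precisely the analytic Chevalley--Shephard--Todd theorem of \cite{BDGS}, and it requires a genuinely different argument (e.g.\ solving for the $\widehat g_j$ by a Cramer's-rule/determinant construction away from the branch locus and then extending across it, or a direct-image coherence argument); density of polynomials in $\m H$ gives you uniqueness only after existence is settled. As written, the core of the higher-dimensional case is asserted rather than proved.
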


\section{On the weighted Bergman modules}\label{mainsection}
In this section, we prove Theorem \ref{firstmain}  and Theorem \ref{maint}. We start by recalling weighted Bergman spaces. Let $\Omega$ be a bounded domain in $\mb C^d.$ For a continuous weight function $\omega: \Omega \to (0,\infty),$ the weighted Bergman space $\mb A^2_\omega(\Omega)$ is defined by
$$\mb A^2_\omega(\Omega) = \{f \in \m O(\Omega) : \int_\Omega |f(\bl z)|^2 \omega(\bl z) dV(\bl z) < \infty\},$$
where $dV$ is the normalized Lebesgue measure on $\Omega.$ Clearly, the weighted Bergman space $\mb A^2_\omega(\Omega)$ is an analytic Hilbert module. 


Let $G$ be a finite complex reflection group and $\Omega$ be a $G$-space. We consider a weight function of the form $\omega =\widetilde{\omega}\circ \bl \theta$ for a continuous map $\widetilde{\omega} : \bl \theta(\Omega) \to (0,\infty),$ where $\bl \theta$ is a basic polynomial map associated to the group $G.$ Let $\mathcal U_d$ denote the group of unitary operators on $\C^d.$ Since any finite group $G$ generated by complex reflections on $\C^d$ is a subgroup of $\mathcal U_d$, it can be shown that the reproducing kernel of the weighted Bergman space $\mb A^2_\omega(\Omega)$ is $G$-invariant as the weight function $\omega$ is $G$-invariant. Specializing   Lemma \ref{ortho} to the weighted Bergman module $\mb A^2_\omega(\Omega),$ we obtain the following result.
\begin{lem}\label{lalala}
Let $G$ be a finite complex reflection group and 
$\Omega \subseteq \mb C^d$ be a $G$-invariant domain. 
Then  $\mb P_\varrho: \mb A^2_\omega(\Omega) \to \mb A^2_\omega(\Omega)$ is an orthogonal projection and $\mb P_\varrho\big (\mb A^2_\omega(\Omega)\big) = R^G_{\varrho}\big( \mb A^2_\omega(\Omega)\big).$
\end{lem}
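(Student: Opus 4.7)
The plan is to verify that $\mb A^2_\omega(\Omega)$ satisfies the hypotheses of Lemma \ref{ortho} and then simply invoke that lemma. The weighted Bergman space is already noted to be an analytic Hilbert module, so the only real content is to check that its reproducing kernel $\m B_\omega$ is $G$-invariant.

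First I would establish that the measure $\omega(\bl z)\,dV(\bl z)$ is $G$-invariant. Since $G$ is a finite pseudoreflection group acting on $\C^d$, it sits inside $\mathcal U_d$ (unitary groups preserve finite-order rotations of $\C^d$), and unitary transformations preserve Lebesgue measure, so $dV$ is $G$-invariant on $\Omega$. The weight has the form $\omega = \widetilde{\omega}\circ \bl\theta$, and $\bl\theta$ is $G$-invariant by construction since each component $\theta_i$ lies in $\mb C[z_1,\ldots,z_d]^G$; therefore $\omega$ itself is $G$-invariant. Combining these two facts, the measure $\omega\,dV$ is invariant under the action of $G$.

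Second, from the invariance of the measure it follows that for each $\sigma\in G$ the composition operator $U_\sigma: f \mapsto f\circ \sigma^{-1}$ is a surjective isometry of $\mb A^2_\omega(\Omega)$ onto itself, with inverse $U_{\sigma^{-1}}$; hence $U_\sigma$ is unitary. The standard transformation rule for reproducing kernels under unitary composition then yields
\begin{equation*}
\m B_\omega(\sigma\cdot \bl z,\sigma\cdot \bl w) = \m B_\omega(\bl z, \bl w) \quad \text{for all } \sigma \in G,\ \bl z,\bl w\in\Omega,
\end{equation*}
which is precisely the $G$-invariance of the kernel.

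With these two verifications in hand, Lemma \ref{ortho} applies verbatim to $\m H = \mb A^2_\omega(\Omega)$, giving both that $\mb P_\varrho$ is an orthogonal projection and that its range equals $R^G_\varrho(\mb A^2_\omega(\Omega))$. I do not foresee a genuine obstacle: the statement is a direct specialization, and the only mildly technical point is recording why the unitarity of the $G$-action on $\C^d$ together with $\omega = \widetilde{\omega}\circ\bl\theta$ forces $\m B_\omega$ to be $G$-invariant.
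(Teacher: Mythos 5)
Your proposal is correct and follows exactly the paper's route: the paper likewise notes that $G\subseteq \mathcal U_d$ and $\omega=\widetilde{\omega}\circ\bl\theta$ is $G$-invariant, concludes that the reproducing kernel of $\mb A^2_\omega(\Omega)$ is $G$-invariant, and then obtains the lemma by specializing Lemma \ref{ortho}. Your explicit verification via the unitary composition operators $U_\sigma$ merely fills in the detail the paper leaves as ``it can be shown.''
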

For each one-dimensional representation $\varrho \in \widehat{G},$ note that the function $ \frac{|\ell_\varrho(\bl z)|^2}{|J_{\bl \theta}(\bl z)|^2}$ is a positive-valued continuous function on $\Omega \setminus \cup_{i=1}^t H_i,$ where the hyperplanes $H_i$'s are as in Corollary \ref{Jac}. Since $\ell_\varrho$ is $\varrho$-invariant, there exists a unique positive integer, say $m_\varrho,$ such that $\ell_\varrho^{m_\varrho}$ is $G$-invariant. Then $\ell_\varrho^{m_\varrho}(\bl z) = f_\varrho(\bl \theta(\bl z))$ for a unique polynomial $f_\varrho$. The function $\omega_\varrho : \bl \theta(\Omega) \setminus \bl \theta(\cup_{i=1}^t H_i) \to (0,\infty)$ defined by \bea\label{weight}\omega_\varrho(\bl u) = \frac{|f_\varrho(\bl u)|^\frac{2}{m_\varrho}}{|f_\sgn(\bl u)|^\frac{2}{m_{\rm sgn}}} \widetilde{\omega}(\bl u), \,\, \bl u \in \bl \theta(\Omega) \setminus \bl \theta(\cup_{i=1}^t H_i)\eea is continuous.
With each one-dimensional representation $\varrho \in \widehat{G},$ we associate the linear map $\Gamma_\varrho : \mb A_{\omega_\varrho}^2(\bl \theta(\Omega)) \to \mb A^2_\omega(\Omega)$ defined by \Bea \Gamma_\varrho \psi = \frac{1}{\sqrt{|G|}}  (\psi \circ \bl \theta)\ell_\varrho .\Eea 
Note that
\Bea \norm{\Gamma_\varrho \psi}^2 &=& \frac{1}{|G|} \int_\Omega |\psi \circ \bl \theta(\bl z)|^2 |\ell_\varrho(\bl z)|^2 \omega(\bl z) dV(\bl z) \\ &=& \frac{1}{|G|} \int_{\Omega \setminus \cup_{i=1}^t H_i} |\psi \circ \bl \theta(\bl z)|^2 \frac{|f_\varrho(\bl \theta(\bl z))|^\frac{2}{m_\varrho}}{|f_{\rm sgn}(\bl \theta(\bl z))|^\frac{2}{m_{\rm sgn}}}  \widetilde{\omega}(\bl \theta(\bl z)) |J_{\bl \theta}(\bl z)|^2 dV(\bl z) \\ &=& \int_{\bl \theta(\Omega) \setminus \bl \theta(\cup_{i=1}^t H_i)} |\psi(\bl z)|^2 \omega_\varrho(\bl z) dV(\bl z) = \norm{\psi}^2, \Eea the last equality follows since the set $\bl \theta(\Omega) \setminus \bl \theta(\cup_{i=1}^t H_i)$ is of Lebesgue measure $0.$ So $\Gamma_\varrho$ is an isometry. Therefore, the range of $\Gamma_\varrho$ is closed and $\Gamma_\varrho\big(\mb A_{\omega_\varrho}^2(\bl \theta(\Omega))\big)$ is a reproducing kernel Hilbert space. Moreover, we show in the next lemma that it coincides with the relative invariant subspace associated to the representation $\varrho.$
\begin{lem}\label{equiv}
Let $G$ be a finite pseudoreflecion group and $\Omega$ be a $G$-invariant domain in $\C^d.$ Suppose that $\bl \theta :\Omega \to \bl \theta(\Omega)$ is a basic polynomial map associated to the group $G$. Then $\Gamma_\varrho : \mb A_{\omega_\varrho}^2(\bl \theta(\Omega)) \to R^G_\varrho\big(\mb A^2_\omega(\Omega)\big)$ is unitary.
\end{lem}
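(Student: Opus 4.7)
The plan is to use the identification of $R^G_\varrho(\m H)$ furnished by Remark \ref{expression}, together with the isometry computation already carried out immediately before the statement, to conclude that $\Gamma_\varrho$ is both range-restricted correctly and surjective.

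First, I would verify that $\Gamma_\varrho$ indeed maps into $R^G_\varrho(\mb A^2_\omega(\Omega))$. Given $\psi \in \mb A^2_{\omega_\varrho}(\bl \theta(\Omega))$ and $\sigma \in G$, I compute
\[
\sigma\big((\psi \circ \bl \theta)\ell_\varrho\big)(\bl z) = \psi\big(\bl \theta(\sigma^{-1}\cdot \bl z)\big)\,\ell_\varrho(\sigma^{-1}\cdot \bl z) = \psi(\bl \theta(\bl z))\,\chi_\varrho(\sigma)\,\ell_\varrho(\bl z),
\]
using $G$-invariance of $\bl \theta$ (it is built from basic polynomials) and $\varrho$-invariance of $\ell_\varrho$ from Lemma \ref{gencz}. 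Thus $\sigma(\Gamma_\varrho \psi)=\chi_\varrho(\sigma)\,\Gamma_\varrho\psi$, placing $\Gamma_\varrho\psi$ in $R^G_\varrho(\mb A^2_\omega(\Omega))$. The isometry property $\|\Gamma_\varrho \psi\| = \|\psi\|$ is exactly the calculation carried out in the paragraph preceding the lemma.

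Next I would establish surjectivity, which is the only substantive step that remains. Take $f \in R^G_\varrho(\mb A^2_\omega(\Omega))$. By Remark \ref{expression} (which packages Lemma \ref{quo} with the analytic CST Theorem \ref{acst}), there exists a unique $\widehat{f}\in \m O(\bl \theta(\Omega))$ with $f = (\widehat{f}\circ \bl \theta)\,\ell_\varrho$. Setting $\psi := \sqrt{|G|}\,\widehat{f}$, the same change-of-variables chain used in the isometry computation, read backwards, gives
\[
\|f\|_{\mb A^2_\omega(\Omega)}^2 = \int_{\Omega\setminus \cup_i H_i} |\widehat{f}\circ \bl \theta(\bl z)|^2\,|\ell_\varrho(\bl z)|^2\,\omega(\bl z)\,dV(\bl z) = \tfrac{1}{|G|}\int_{\bl \theta(\Omega)\setminus \bl \theta(\cup_i H_i)} |\psi|^2\,\omega_\varrho\,dV,
\]
where the $|G|$-to-$1$ nature of $\bl \theta$ off the reflecting hyperplanes and the definition of $\omega_\varrho$ supply the factors. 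In particular $\psi \in \mb A^2_{\omega_\varrho}(\bl \theta(\Omega))$, and by construction $\Gamma_\varrho \psi = (\widehat{f}\circ \bl \theta)\ell_\varrho = f$, so $\Gamma_\varrho$ is onto.

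The only delicate point I anticipate is bookkeeping around the null sets on which $\ell_\varrho$ or $J_{\bl \theta}$ vanishes, so that the change-of-variables is legitimate and $\omega_\varrho$ (which a priori is defined only on $\bl \theta(\Omega)\setminus \bl \theta(\cup_i H_i)$) yields a well-defined $L^2$ condition. This is not really an obstacle, since the reflecting hyperplanes form a real-analytic subvariety of measure zero and the integrand extends naturally across it; the integral identity above is therefore valid and identifies the image of $\Gamma_\varrho$ with all of $R^G_\varrho(\mb A^2_\omega(\Omega))$, completing the proof that $\Gamma_\varrho$ is unitary.
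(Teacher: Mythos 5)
Your proposal is correct and follows essentially the same route as the paper: the same computation showing $\sigma(\Gamma_\varrho\psi)=\chi_\varrho(\sigma)\Gamma_\varrho\psi$, the same appeal to the preceding isometry calculation, and the same use of Remark \ref{expression} plus the change-of-variables identity to produce a preimage $\sqrt{|G|}\,\widehat{f}$ for any $f\in R^G_\varrho\big(\mb A^2_\omega(\Omega)\big)$. No substantive differences.
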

\begin{proof}
Since $\Gamma_\varrho$ is an isometry, it is enough to prove that $\Gamma_\varrho \big( \mb A_{\omega_\varrho}^2(\bl \theta(\Omega)) \big) = R^G_\varrho\big(\mb A^2_\omega(\Omega)\big).$
Suppose that $h \in \mb A_{\omega_\varrho}^2(\bl \theta(\Omega)).$ Then for every $\sigma \in G$,
\Bea \sigma(\Gamma_\varrho h)= \frac{1}{\sqrt{|G|}} \sigma\big((h \circ \bl \theta)\ell_\varrho\big) &=&  \frac{1}{\sqrt{|G|}} \sigma(\ell_\varrho) \sigma(h \circ \bl \theta)\\ &=& \frac{1}{\sqrt{|G|}} \chi_\varrho (\sigma) \ell_\rho (h \circ \bl \theta) = \chi_\varrho(\sigma) \Gamma_\varrho h . \Eea Hence $\Gamma_\varrho h$ is a $\varrho$-invariant element. 

On the other hand, consider $h \in R^G_\varrho \big(\mb A^2_\omega(\Omega)\big).$ By Remark \ref{expression}, we write $h = \ell_\varrho (\widehat{h} \circ \bl \theta),$ for $\widehat{h} \in \mathcal O\big(\bl \theta(\Omega)\big).$ Clearly, 
\Bea
\norm{h}^2 = \int_{\Omega} |h(\bl z)|^2 \omega(\bl z) dV(\bl z) &=& \int_{\Omega} |\ell_\varrho(\bl z)|^2 |\widehat{h} \circ \bl \theta(\bl z)|^2 \omega(\bl z) dV(\bl z) \\ &=& |G| \int_{\bl \theta(\Omega)} |\widehat{h}(\bl z)|^2\omega_\varrho(\bl z) dV(\bl z).
\Eea
This shows that $\widehat{h} \in \mb A_{\omega_\varrho}^2\big(\bl \theta(\Omega)\big)$ and $\Gamma_\varrho(\sqrt{|G|}~ \widehat{h}) = h.$ Therefore, $h \in \Gamma_\varrho \big(\mb A_{\omega_\varrho}^2\big(\bl \theta(\Omega)\big)\big).$
\end{proof}
\begin{rem}\rm
From Remark \ref{expression}, we have $R^G_\varrho\big(\mb A^2_\omega(\Omega)\big) \subseteq \{\ell_\varrho~(\widehat{f} \circ \bl \theta)\mid \widehat{f} \in \mathcal O\big(\bl \theta(\Omega)\big)\}.$ Whereas Lemma \ref{equiv} asserts that $$R^G_\varrho \big( \mb A^2_\omega(\Omega)\big) = \{\ell_\varrho~(\widehat{f} \circ \bl \theta) \mid \widehat{f} \in \mb A_{\omega_\varrho}^2\big(\bl \theta(\Omega)\big)\}.$$ We refer $\ell_\varrho$ by \emph{generating polynomial} of $R^G_\varrho\big(\mb A^2_\omega(\Omega)\big).$
\end{rem}
Now we have all the ingredients to prove Theorem \ref{firstmain}.
\begin{proof}[\bf{Proof of Theorem \ref{firstmain}}]
The first part follows from Lemma \ref{equiv}.

In Lemma \ref{ortho}, we show that $\mb P_\varrho$ is the orthogonal projection onto the subspace $R^G_\varrho \big(\mb A^2_\omega(\Omega)\big) = \Gamma_\varrho \big(\mb A_{\omega_\varrho}^2\big(\bl \theta(\Omega)\big)\big).$ Then the reproducing kernel $\m B_\varrho$ of $\Gamma_{\varrho} \big(\mb A_{\omega_\varrho}^2\big(\bl \theta(\Omega)\big)\big)$ is given by 
\bea\label{sec}
\m B_\varrho(\bl z, \bl w) = \big( \mb P_\varrho (\m B_\omega)_{\bl w} \big) (\bl z) 
= \frac{1}{|G|} \sum_{\sigma \in G}\chi_\varrho(\sigma^{-1})  \m B_\omega(\sigma^{-1} \cdot \bl z, \bl w ).
\eea
An analogous calculation as in \cite[p. 6]{ghosh2020multiplication} yields the following formula for the reproducing kernel  $\m B_\varrho$ of $\Gamma_\varrho\big(\mb A_{\omega_\varrho}^2(\bl \theta(\Omega))\big)$:
\bea\label{firstmu}
\m B_\varrho(\bl z,\bl w)= \frac{1}{|G|}\ell_\varrho(\bl z)\m B_{\omega_\varrho}\big(\bl \theta(\bl z),\bl \theta(\bl w)\big)\ov{\ell_\varrho(\bl w)}  \mbox{~for~} \bl z, \bl w\in\Omega,
\eea
where $\m B_{\omega_\varrho}$ is the kernel of $\mb A_{\omega_\varrho}^2(\bl \theta(\Omega))$. Combining Equation \eqref{sec} and Equation \eqref{firstmu}, we have the desired  result.
\end{proof}

\begin{rem}\rm
 For a fixed $\bl w \in \Omega$, as a function of $\bl z,$ we write $\mathcal B_\varrho(\bl z, \bl w) = \big( \mb P_\varrho (\m B_\omega)_{\bl w} \big) (\bl z) = \ell_\varrho(\bl z) \big((\m B_\omega)_{\bl w}^\varrho \circ \bl \theta(\bl z)\big)$ for some $(\m B_\omega)_{\bl w}^\varrho \in \mathcal O(\bl \theta(\Omega))$. Similarly we have, $\mathcal B_\varrho (\bl z, \bl w)= \big(\overline{ \mb P_\varrho (\m B_\omega)_{\bl z}} \big) (\bl w) = \overline{\ell_\varrho(\bl w)} \big( \overline{(\m B_\omega)_{\bl z}^\varrho \circ \bl \theta(\bl w)\big)}$ for some $(\m B_\omega)_{\bl z}^\varrho \in \mathcal O(\bl \theta(\Omega)),$ when $\bl z$ is fixed but arbitrary. Since the variables $\bl z$ and $\bl w$ are independent of each other,  $\mathcal B_\varrho (\bl z, \bl w)$ divisible by $\ell_\varrho(\bl z) \overline{\ell_\varrho(\bl w)}$ for every $\bl z, \bl w \in \Omega.$ Therefore, the right side of Equation \eqref{formu} is well-defined, even if $\bl z$ or $\bl w$ belongs to $N_\varrho=\{\bl z \in \Omega:  \ell_\varrho (\bl z) =0\}.$
\end{rem}
 Additionally, if $G$ is abelian, we have the following result.
\begin{prop}
Suppose that $G$ is an abelian group, then 
\Bea
\mb A^2_\omega(\Omega) \cong \displaystyle\oplus_{\varrho \in \widehat{G}} \mb A^2_{\omega_\varrho}(\bl \theta(\Omega)),
\Eea
and 
\Bea
\m B_\omega(\bl z,\bl w) = \frac{1}{|G|} \sum_{\varrho \in \widehat{G}} \ell_\varrho(\bl z) \m B_{\omega_\varrho}(\bl \theta(\bl z),\bl \theta(\bl w))\overline{\ell_\varrho(\bl w)}, \, \, \bl z, \bl w \in \Omega,
\Eea
 where $\m B_\omega$ and $\m B_{\omega_\varrho}$ are the reproducing kernels of $\mb A_\omega(\Omega)$ and $\mb A^2_{\omega_\varrho}(\bl \theta(\Omega)),$ respectively,  and the polynomial $\ell_\varrho$ is as described in Lemma \ref{gencz} for $\varrho \in \widehat{G}.$
\end{prop}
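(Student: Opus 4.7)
The plan is to combine Theorem~\ref{lula} with Theorem~\ref{firstmain} (and Equation~\eqref{firstmu} from its proof). The crucial structural input is that when $G$ is abelian, every irreducible representation of $G$ is one-dimensional, so $\widehat{G}$ consists entirely of one-dimensional representations. In particular, each isotypic component in the decomposition appearing in Theorem~\ref{lula} has multiplicity $n_\varrho = 1$, and Theorem~\ref{firstmain} applies to \emph{every} summand.

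First, I would apply Theorem~\ref{lula} to the analytic Hilbert module $\mb A^2_\omega(\Omega)$ (which has $G$-invariant reproducing kernel since $\omega = \widetilde\omega\circ\bl\theta$ is $G$-invariant and $G\subset\mathcal U_d$). This gives the orthogonal decomposition
\[
\mb A^2_\omega(\Omega) \;=\; \bigoplus_{\varrho \in \widehat{G}} \mb P_\varrho \mb A^2_\omega(\Omega).
\]
By Lemma~\ref{lalala}, $\mb P_\varrho \mb A^2_\omega(\Omega) = R^G_\varrho(\mb A^2_\omega(\Omega))$. Since $G$ is abelian, each $\varrho$ is one-dimensional, and Lemma~\ref{equiv} supplies a unitary $\Gamma_\varrho : \mb A^2_{\omega_\varrho}(\bl\theta(\Omega)) \to R^G_\varrho(\mb A^2_\omega(\Omega))$. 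Assembling the $\Gamma_\varrho$'s into a single unitary yields the asserted isometric isomorphism $\mb A^2_\omega(\Omega) \cong \bigoplus_{\varrho\in\widehat G} \mb A^2_{\omega_\varrho}(\bl\theta(\Omega))$.

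For the kernel identity, I would use the general fact that under an orthogonal decomposition of a reproducing kernel Hilbert space, the reproducing kernel is the sum of the reproducing kernels of the summands. Thus
\[
\m B_\omega(\bl z, \bl w) \;=\; \sum_{\varrho \in \widehat G} \m B_\varrho(\bl z, \bl w),
\]
where $\m B_\varrho$ is the reproducing kernel of $R^G_\varrho(\mb A^2_\omega(\Omega)) = \Gamma_\varrho(\mb A^2_{\omega_\varrho}(\bl\theta(\Omega)))$. Substituting Equation~\eqref{firstmu} into each term gives precisely
\[
\m B_\omega(\bl z, \bl w) \;=\; \frac{1}{|G|} \sum_{\varrho \in \widehat G} \ell_\varrho(\bl z)\, \m B_{\omega_\varrho}\!\big(\bl\theta(\bl z),\bl\theta(\bl w)\big)\, \overline{\ell_\varrho(\bl w)}.
\]

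There is no real obstacle here beyond verifying that Theorem~\ref{lula} applies and that the one-dimensionality of every $\varrho \in \widehat G$ allows us to identify every isotypic component with a weighted Bergman space on $\bl\theta(\Omega)$ via Theorem~\ref{firstmain}. The mildest subtlety is the summation of reproducing kernels over the orthogonal direct sum: one should note that the sum is finite (as $|\widehat G| = |G| < \infty$ for abelian $G$), so there are no convergence issues to address.
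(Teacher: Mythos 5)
Your proposal is correct and follows essentially the same route as the paper: the orthogonal decomposition from Theorem \ref{lula} combined with Lemma \ref{lalala} and Lemma \ref{equiv} for the first claim, and the decomposition $\m B_\omega(\cdot,\bl w)=\sum_{\varrho}\mb P_\varrho \m B_\omega(\cdot,\bl w)$ together with Equation \eqref{firstmu} for the kernel identity. No gaps.
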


\begin{proof}
Since $G$ is a finite abelian group, each irreducible representation of $G$ is one-dimensional. The isotypic decomposition of the left regular representation of $G$ on $\mb A^2_\omega(\Omega)$  yields the orthogonal decomposition $\mb A^2_\omega(\Omega) = \oplus_{\varrho \in \widehat{G}} \mb P_\varrho\big(\mb A^2_\omega(\Omega)\big)$ (cf. Theorem \ref{lula}). Moreover, $\mb P_\varrho$ is the orthogonal projection onto the subspace $R^G_\varrho\big(\mb A^2_\omega(\Omega)\big)$ (cf. Lemma \ref{lalala}) which is isometrically isomorphic to $\mb A^2_{\omega_\varrho}(\bl \theta(\Omega))$ (cf. Lemma \ref{equiv}). Thus, the first part follows.

For every fixed $\bl w \in \Omega,$ the function $ \m B_\omega(\cdot,\bl w) \in \mb A^2_\omega (\Omega)$ and it admits the decomposition $\m B_\omega(\bl z,\bl w) = \sum_{\varrho \in \widehat{G}} \mb P_\varrho \m B_\omega(\bl z,\bl w).$  Then Equation \eqref{firstmu} proves the second part.
\end{proof}

We are now in a position to prove Theorem \ref{maint}.
\begin{proof}[\bf{Proof of Theorem \ref{maint}}]
First note that for a fixed $\bl w \in \Omega,$ \Bea(\Gamma_\varrho(\m B_{\omega_\varrho})_{\bl \theta(\bl w)})(\bl z) &=& \frac{1}{\sqrt{|G|}} \ell_\varrho(\bl z)  \m B_{\omega_\varrho}(\bl \theta(\bl z),\bl \theta(\bl w)) \\ &=& \frac{1}{\sqrt{|G|}} \frac{1}{\ov{\ell_\varrho(\bl w)}} \displaystyle\sum_{\sigma \in G} \chi_\varrho(\sigma^{-1}) \m B_\omega(\sigma^{-1} \cdot \bl z, \bl w).\Eea
Moreover, the $\Gamma_\varrho$ map can be extended to $L^2_{\omega_\varrho}(\bl \theta(\Omega))$ isometrically. Therefore, for a fixed but arbitrary $\bl w \in \Omega$ we have the following: $\phi \in L^2_{\omega_\varrho}(\bl \theta(\Omega)),$
\Bea(P_{\bl \theta(\Omega)}^{\omega_\varrho} \phi)( \bl \theta (\bl w)) = \inner{\phi}{(\m B_{\omega_\varrho})_{\bl \theta(\bl w)}} &=&\inner{\Gamma_\varrho \phi}{\Gamma_\varrho (\m B_{\omega_\varrho})_{\bl \theta(\bl w)}} \\ &=& \frac{1}{|G|} \inner{\ell_\varrho(\phi\circ \bl \theta )}{\frac{1}{\ov{\ell_\varrho(\bl w)}} \displaystyle\sum_{\sigma \in G} \chi_\varrho(\sigma^{-1}) (\m B_\omega)_{\sigma \cdot \bl w} } \\ &=& \frac{1}{|G|} \frac{1}{\ell_\varrho(\bl w)} \displaystyle\sum_{\sigma \in G} \chi_\varrho(\sigma) \inner{\ell_\varrho(\phi\circ \bl \theta )}{(\m B_\omega)_{\sigma \cdot \bl w}} \\ &=& \frac{1}{|G|} \frac{1}{\ell_\varrho(\bl w)} \inner{\displaystyle\sum_{\sigma \in G} \chi_\varrho(\sigma) \sigma^{-1}(\ell_\varrho)(\phi\circ \bl \theta )}{(\m B_\omega)_{\bl w}} \\ &=& \frac{1}{\ell_\varrho(\bl w)} \inner{\ell_\varrho(\phi\circ \bl \theta )}{(\m B_\omega)_{ \bl w}} \\ &=& \frac{1}{\ell_\varrho(\bl w)} P_\Omega^\omega (\ell_\varrho(\phi\circ \bl \theta ))(\bl w).\Eea
Hence the result follows.
\end{proof}

{\bf Sign representation.} For the sign representation of $G$, we observe that the relative invariant subspace $R_{\rm sgn}^G(\mb A^2_\omega(\Omega))$ is isometrically isomorphic to the Bergman space $\mb A^2_{\widetilde{\omega}}(\bl \theta(\Omega)),$ where $\omega = \widetilde{\omega} \circ \bl \theta.$ In particular, for the weight function $\omega \equiv 1,$ we have that the Bergman space on $\bl \theta(\Omega)$ is isometrically isomorphic to a subspace $R_{\rm sgn}^G(\mb A^2(\Omega))$ of $\mb A^2(\Omega).$ Subsequently, we get the following results.

We recall from Equation \eqref{mu} that the character of the sign representation is given by $$\chi_{\rm sgn}(\sigma^{-1}) = \det(\sigma)$$ for $\sigma \in G$. Also from Corollary \ref{Jac}, we have that $\ell_{\rm sgn} = J_{\bl \theta}.$ Therefore, the following corollary is an immediate consequence of Theorem \ref{firstmain}. 

\begin{cor}\label{parti}
The weighted Bergman kernel $\m B_{\widetilde{\omega}}$ of $\mb A^2_{\widetilde{\omega}}(\bl \theta(\Omega))$ is given by the following formula:
\bea\label{thir}\m B_{\widetilde{\omega}}\big(\bl \theta(\bl z), \bl \theta(\bl w)\big) = \frac{1}{J_{\bl \theta}(\bl z) \ov{J_{\bl \theta}(\bl w)}} \displaystyle\sum_{\sigma \in G} \det(\sigma) \m B_\omega(\sigma^{-1} \cdot \bl z, \bl w)  \,\,\,\,\,\, \text{for}\,\,  \bl z, \bl w \in \Omega,
\eea
where $\m B_\omega$ is the reproducing kernel of $\mb A^2_\omega(\Omega)$ and $J_{\bl \theta}$ is the determinant of the complex jacobian matrix of the basic polynomial map $\bl \theta.$
\end{cor}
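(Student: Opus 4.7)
The plan is to specialize Theorem \ref{firstmain} to the case $\varrho = \sgn$; no further analytic work is required, so the whole proof amounts to matching symbols. Starting from the identity
$$\m B_{\omega_\varrho}\big(\bl \theta(\bl z), \bl \theta(\bl w)\big) = \frac{1}{\ell_\varrho(\bl z)\,\ov{\ell_\varrho(\bl w)}} \sum_{\sigma \in G} \chi_\varrho(\sigma^{-1})\, \m B_\omega(\sigma^{-1}\cdot \bl z, \bl w),$$
I would identify the three ingredients $\ell_\varrho$, $\chi_\varrho(\sigma^{-1})$, and $\omega_\varrho$ when $\varrho$ is the sign representation, and then read off Equation \eqref{thir}.

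For the generating polynomial, Corollary \ref{Jac} states $\ell_{\sgn} = J_{\bl \theta}$ (up to a nonzero constant), so the prefactor becomes $1/(J_{\bl \theta}(\bl z)\,\ov{J_{\bl \theta}(\bl w)})$. For the character, Equation \eqref{mu} gives $\chi_{\sgn}(\tau) = (\det \tau)^{-1}$, hence $\chi_{\sgn}(\sigma^{-1}) = (\det \sigma^{-1})^{-1} = \det \sigma$ for every $\sigma \in G$. For the weight, the defining relation $\omega_\varrho(\bl \theta(\bl z)) = \frac{|\ell_\varrho(\bl z)|^2}{|J_{\bl \theta}(\bl z)|^2}\widetilde{\omega}(\bl \theta(\bl z))$ collapses to $\omega_{\sgn} = \widetilde{\omega}$, since $|\ell_{\sgn}|^2/|J_{\bl \theta}|^2 \equiv 1$. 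Substituting these three identifications into the formula from Theorem \ref{firstmain} produces Equation \eqref{thir} verbatim.

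There is essentially no obstacle here, because Theorem \ref{firstmain} has already done all the heavy lifting: constructing the isometry $\Gamma_\varrho$, verifying that $\mb P_\varrho$ is the orthogonal projection onto $R^G_\varrho(\mb A^2_\omega(\Omega))$, and computing the reproducing kernel of the image. The only small bookkeeping point I would flag is the nonzero constant $c$ in $\ell_{\sgn} = c\, J_{\bl \theta}$: the factor $|c|^2$ entering through the prefactor $\ell_{\sgn}(\bl z)\,\ov{\ell_{\sgn}(\bl w)}$ is exactly cancelled by the $|c|^2$ absorbed into $\omega_{\sgn}$ via the defining formula for $\omega_\varrho$, so the final statement is constant-free as displayed.
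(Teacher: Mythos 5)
Your proposal is correct and follows exactly the paper's route: the paper likewise derives Corollary \ref{parti} by specializing Theorem \ref{firstmain} to $\varrho=\sgn$, using $\chi_{\sgn}(\sigma^{-1})=\det(\sigma)$ from Equation \eqref{mu}, $\ell_{\sgn}=J_{\bl\theta}$ from Corollary \ref{Jac}, and the resulting identification $\omega_{\sgn}=\widetilde{\omega}$. Your remark about the cancellation of the constant $c$ is a sound piece of bookkeeping that the paper leaves implicit.
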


Now we state a formula involving the weighted Bergman projections $P_\Omega^\omega: L^2_\omega(\Omega) \to \mb A^2_\omega(\Omega)$ and $P_{\bl \theta(\Omega)}^{\widetilde{\omega}} : L^2_{\widetilde{\omega}}\big(\bl \theta(\Omega)\big) \to \mb A^2_{\widetilde{\omega}}\big(\bl \theta(\Omega)\big)$ which follows immediately  from Theorem \ref{maint}. This is a generalization of \cite[p. 167, Theorem 1]{MR610182} to the weighted Bergman projections. However, the choice of proper holomorphic map in \cite[p. 167, Theorem 1]{MR610182} is restricted here to a basic polynomial map associated to some finite complex reflection group.
\begin{cor}
The weighted Bergman projections $P_\Omega^\omega$ and $P_{\bl \theta(\Omega)}^{\widetilde{\omega}}$ are related to
\Bea
P_\Omega^\omega\big(J_{\bl \theta} ~ (\phi \circ \bl \theta) \big)= J_{\bl \theta}\big((P_{\bl \theta(\Omega)}^{\widetilde{\omega}} \phi) \circ \bl \theta\big) , \,\, \phi \in   L^2_{\widetilde{\omega}}\big(\bl \theta(\Omega)\big),
\Eea
where $\omega = \widetilde{\omega} \circ \bl \theta.$
\end{cor}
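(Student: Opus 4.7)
The plan is to deduce this corollary as a direct specialization of Theorem \ref{maint} to the sign representation $\varrho = \sgn$ of $G$. The work reduces to matching the data of Theorem \ref{maint} with the data appearing in the corollary.

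First I would invoke Corollary \ref{Jac}, which identifies the generating polynomial of the sign-isotypic subspace as $\ell_{\sgn} = J_{\bl \theta}$. Next I would compute the associated weight $\omega_{\sgn}$ on $\bl \theta(\Omega)$: by the definition given before Theorem \ref{firstmain},
\[
\omega_\varrho(\bl \theta(\bl z)) = \frac{|\ell_\varrho(\bl z)|^2}{|J_{\bl \theta}(\bl z)|^2}\,\widetilde{\omega}(\bl \theta(\bl z)),
\]
so substituting $\ell_{\sgn} = J_{\bl \theta}$ collapses the ratio to $1$, yielding $\omega_{\sgn} = \widetilde{\omega}$ on $\bl \theta(\Omega)$. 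In particular $L^2_{\omega_{\sgn}}(\bl \theta(\Omega)) = L^2_{\widetilde{\omega}}(\bl \theta(\Omega))$ and the two projections $P^{\omega_{\sgn}}_{\bl \theta(\Omega)}$ and $P^{\widetilde{\omega}}_{\bl \theta(\Omega)}$ coincide as operators.

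Plugging these two identifications into the conclusion of Theorem \ref{maint} with $\varrho = \sgn$ gives the asserted formula verbatim. Since the entire argument is a direct substitution into a result already proved, there is no substantive obstacle; the only care needed is to observe that the hypothesis $\omega = \widetilde{\omega}\circ \bl \theta$ of the corollary is precisely the compatibility between the two weights that is required in order to invoke Theorem \ref{maint}.
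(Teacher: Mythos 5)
Your proposal is correct and is exactly the paper's intended argument: the paper derives this corollary as an immediate specialization of Theorem \ref{maint} to the sign representation, using $\ell_{\sgn}=J_{\bl\theta}$ from Corollary \ref{Jac} so that $\omega_{\sgn}=\widetilde{\omega}$. No issues.
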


\section{Proper Holomorphic maps and Bergman Kernels}
In this section, we prove a transformation formula for weighted Bergman kernels under   a proper holomorphic map whose  group of deck transformations is either a finite complex reflection of group or a conjugate to a finite complex reflection group.

Suppose that $\bl f : \Omega_1 \to \Omega_2$ is a proper holomorphic map with the finite pseudorflection group $G$ as the group of deck transformations and $\omega:\Omega_1 \to (0,\infty)$ is a continuous function of the form $\omega= \widetilde{\omega}\circ \bl f$ for a continuous function $\widetilde{\omega}:\Omega_2 \to (0,\infty)$. A transformation rule for the weighted Bergman kernels of $\mb A^2_\omega(\Omega_1)$ and $\mb A^2_{\widetilde{\omega}}(\Omega_2)$ under the proper holomorphic map $\bl f$ is established in next theorem.
\begin{thm}
The reproducing kernels $\m B_\omega$  of $\mb A^2_\omega(\Omega_1)$ and $\m B_{\widetilde{\omega}}$ of $\mb A^2_{\widetilde{\omega}}(\Omega_2)$ transform according to
 \Bea\m B_{\widetilde{\omega}}\big(\bl f(\bl z), \bl f(\bl w)\big) = \frac{1}{J_{\bl f}(\bl z) \ov{J_{\bl f}(\bl w)}} \displaystyle\sum_{\sigma \in G} \det(\sigma) ~\m B_\omega(\sigma^{-1} \cdot \bl z, \bl w)  \,\,\,\,\,\, \text{for}\,\,  \bl z, \bl w \in \Omega_1.\Eea
\end{thm}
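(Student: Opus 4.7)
The plan is to reduce the statement to Corollary \ref{parti} by invoking Proposition \ref{representative} to factor $\bl f$ through the basic polynomial map $\bl \theta$ associated to $G$, and then to apply the standard biholomorphic transformation rule for weighted Bergman kernels.

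First, since $\bl f : \Omega_1 \to \Omega_2$ is a proper holomorphic map factored by the finite pseudoreflection group $G$, Proposition \ref{representative} supplies a biholomorphism $\widehat{\bl f} : \bl \theta(\Omega_1) \to \Omega_2$ such that $\bl f = \widehat{\bl f} \circ \bl \theta$. Differentiating this factorization gives the chain rule identity $J_{\bl f}(\bl z) = J_{\widehat{\bl f}}(\bl \theta(\bl z))\, J_{\bl \theta}(\bl z)$ for $\bl z \in \Omega_1$. Next, set $\widetilde{\omega}'' := \widetilde{\omega} \circ \widehat{\bl f} : \bl \theta(\Omega_1) \to (0,\infty)$, so that $\omega = \widetilde{\omega} \circ \bl f = \widetilde{\omega}'' \circ \bl \theta$, placing us exactly in the setting of Corollary \ref{parti}.

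Applying Corollary \ref{parti} with the weight pair $(\omega, \widetilde{\omega}'')$ yields
\begin{equation*}
\m B_{\widetilde{\omega}''}\big(\bl \theta(\bl z), \bl \theta(\bl w)\big) = \frac{1}{J_{\bl \theta}(\bl z)\, \ov{J_{\bl \theta}(\bl w)}} \sum_{\sigma \in G} \det(\sigma)\, \m B_\omega(\sigma^{-1}\cdot \bl z, \bl w).
\end{equation*}
The next ingredient is the standard change-of-variable formula for weighted Bergman kernels under the biholomorphism $\widehat{\bl f}$: the map $h \mapsto (h \circ \widehat{\bl f})\, J_{\widehat{\bl f}}$ is a unitary from $\mb A^2_{\widetilde{\omega}}(\Omega_2)$ onto $\mb A^2_{\widetilde{\omega}''}(\bl \theta(\Omega_1))$, since
\begin{equation*}
\int_{\bl \theta(\Omega_1)} |h\circ \widehat{\bl f}(\bl u)|^2 |J_{\widehat{\bl f}}(\bl u)|^2\, \widetilde{\omega}''(\bl u)\, dV(\bl u) = \int_{\Omega_2} |h(\bl v)|^2\, \widetilde{\omega}(\bl v)\, dV(\bl v).
\end{equation*}
Hence the two reproducing kernels are related by $\m B_{\widetilde{\omega}''}(\bl u, \bl v) = J_{\widehat{\bl f}}(\bl u)\, \m B_{\widetilde{\omega}}(\widehat{\bl f}(\bl u), \widehat{\bl f}(\bl v))\, \ov{J_{\widehat{\bl f}}(\bl v)}$. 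Substituting $\bl u = \bl \theta(\bl z)$ and $\bl v = \bl \theta(\bl w)$ and using the chain rule identity $J_{\widehat{\bl f}}(\bl \theta(\bl z)) = J_{\bl f}(\bl z)/J_{\bl \theta}(\bl z)$ then combining with the display above produces
\begin{equation*}
\frac{J_{\bl f}(\bl z)\, \ov{J_{\bl f}(\bl w)}}{J_{\bl \theta}(\bl z)\, \ov{J_{\bl \theta}(\bl w)}}\, \m B_{\widetilde{\omega}}\big(\bl f(\bl z), \bl f(\bl w)\big) = \frac{1}{J_{\bl \theta}(\bl z)\, \ov{J_{\bl \theta}(\bl w)}} \sum_{\sigma \in G} \det(\sigma)\, \m B_\omega(\sigma^{-1}\cdot \bl z, \bl w),
\end{equation*}
from which the desired formula follows immediately.

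The only technical wrinkle is the critical locus of $\bl \theta$: strictly speaking, the chain-rule step requires $J_{\bl \theta}(\bl z) \neq 0$, so the equality above is first established on the dense open set $\{(\bl z, \bl w) \in \Omega_1 \times \Omega_1 : J_{\bl \theta}(\bl z)\, J_{\bl \theta}(\bl w) \neq 0\}$. Since both sides of the claimed identity are holomorphic in $\bl z$ and antiholomorphic in $\bl w$ on all of $\Omega_1 \times \Omega_1$ (the right-hand side being a finite sum of such terms, divided by a holomorphic/antiholomorphic function, and the left-hand side being a reproducing kernel precomposed with $\bl f$), the identity extends by continuity. I expect this extension argument, together with keeping track of the two Jacobian identifications, to be the only mildly delicate step; the rest is a bookkeeping exercise assembling Proposition \ref{representative}, Corollary \ref{parti}, and the classical biholomorphic transformation rule.
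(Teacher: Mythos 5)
Your proposal is correct and follows essentially the same route as the paper: factor $\bl f = \widehat{\bl f}\circ\bl\theta$ via Proposition \ref{representative}, apply Corollary \ref{parti} to the weight $\widetilde{\omega}\circ\widehat{\bl f}$ on $\bl\theta(\Omega_1)$, and conclude with the biholomorphic transformation rule for weighted Bergman kernels. Your added care about the unitary realizing the change of variables and the extension across the zero set of $J_{\bl\theta}$ only makes explicit what the paper leaves implicit.
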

\begin{proof}
It follows from Proposition \ref{representative} that there exists a unique biholomorphic map $\widehat{\bl f} : \bl \theta(\Omega_1) \to \Omega_2$ such that $\bl f=\widehat{\bl f}\circ \bl \theta.$ We write $\omega'=\widetilde{\omega}\circ \widehat{\bl f}$ and thus $\omega = \omega' \circ \bl \theta.$

Under the biholomorphic map $\widehat{\bl f} : \bl \theta(\Omega_1) \to \Omega_2,$ the weighted Bergman kernels $\m B_{\omega'}$ of $\mb A^2_{\omega'}(\bl \theta(\Omega_1))$ and $\m B_{\widetilde{\omega}}$ of $\mb A^2_{\widetilde{\omega}}(\Omega_2)$ are related as:
\bea\label{baje}\m B_{\widetilde{\omega}}\big(\widehat{\bl f}(\bl \theta(\bl z)), \widehat{\bl f}(\bl \theta(\bl w))\big) = \frac{1}{J_{\widehat{\bl f}}(\bl \theta(\bl z)) \ov{J_{\widehat{\bl f}}(\bl \theta(\bl w))}} \m B_{\omega'}(\bl \theta(\bl z), \bl \theta(\bl w)),\eea see \cite{MR2484092}. Hence we get the result combining Equation \eqref{baje} and Corollary \ref{parti}.
\end{proof}
 
 In particular, for $\omega \equiv 1$ we get the following the transformation formula for the Bergman kernels of $\Omega_1$ and $\Omega_2$ which overlaps with the Bell's transformation formula described in \cite[p. 687, Theorem 1]{MR645338}. We emphasize that our transformation formula works for the critical points of $\bl f$ as well.
\begin{cor}\label{main}
Let  $\bl f : \Omega_1 \to \Omega_2$ be a proper holomorphic map with a finite complex reflection group $G$ as the group of deck transformations. Then 
\bea\label{bajee} \m B_2\big(\bl f(\bl z), \bl f(\bl w)\big) = \frac{1}{J_{\bl f}(\bl z)\overline{J_{\bl f}(\bl w)}}
 \displaystyle\sum_{\sigma \in G} \det(\sigma) \m B_1(\sigma^{-1} \cdot \bl z, \bl w),\,\, \bl z, \bl w \in \Omega_1,\eea
where $\m B_1$ and $\m B_2$ denote the Bergman kernels of $\Omega_1$ and $\Omega_2,$ respectively.
\end{cor}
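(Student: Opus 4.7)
The plan is to obtain this corollary as the special case $\omega \equiv 1$ of the transformation rule for weighted Bergman kernels just proved. Since the proper holomorphic map $\bl f$ is surjective, the constant weight $\omega \equiv 1$ on $\Omega_1$ can be written as $\omega = \widetilde{\omega} \circ \bl f$ with $\widetilde{\omega} \equiv 1$ on $\Omega_2$. With this choice, the weighted Bergman spaces $\mb A^2_\omega(\Omega_1)$ and $\mb A^2_{\widetilde{\omega}}(\Omega_2)$ coincide with the ordinary Bergman spaces $\mb A^2(\Omega_1)$ and $\mb A^2(\Omega_2)$, and their reproducing kernels are $\m B_1$ and $\m B_2$, respectively. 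Direct substitution into the preceding theorem then yields Equation \eqref{bajee} at every point $(\bl z, \bl w) \in \Omega_1 \times \Omega_1$ at which $J_{\bl f}(\bl z) J_{\bl f}(\bl w) \neq 0$.

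The only genuine content beyond the specialization is the claim, emphasized in the statement, that the identity extends across the critical set of $\bl f$. To justify this I would argue that the numerator on the right-hand side vanishes to matching order. By Proposition \ref{representative} we factor $\bl f = \widehat{\bl f} \circ \bl \theta$ with $\widehat{\bl f}$ biholomorphic, so the chain rule gives $J_{\bl f}(\bl z) = J_{\widehat{\bl f}}(\bl \theta(\bl z)) \cdot J_{\bl \theta}(\bl z)$ and consequently the critical set of $\bl f$ agrees with that of $\bl \theta$, namely $\bigcup_{i=1}^t H_i$. For fixed $\bl w$, the function
\[
\bl z \longmapsto \sum_{\sigma \in G} \det(\sigma)\, \m B_1(\sigma^{-1}\cdot \bl z, \bl w) = |G|\,\bigl(\mb P_{\rm sgn}\m B_1(\cdot, \bl w)\bigr)(\bl z)
\]
belongs to the relative invariant subspace $R^G_{\rm sgn}(\mb A^2(\Omega_1))$. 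By Lemma \ref{quo} together with Corollary \ref{Jac}, it is therefore divisible by $\ell_{\rm sgn} = J_{\bl \theta}$ in the variable $\bl z$, and by a symmetric argument it is divisible by $\overline{J_{\bl \theta}(\bl w)}$ in $\bl w$. These two divisibilities exactly cancel the zeros of $J_{\bl f}(\bl z)\overline{J_{\bl f}(\bl w)}$ contributed by $J_{\bl \theta}$, so the right-hand side of \eqref{bajee} extends holomorphically-antiholomorphically across the critical locus and continues to equal $\m B_2(\bl f(\bl z), \bl f(\bl w))$ there by continuity.

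The principal obstacle is merely this cancellation argument at critical points; once Proposition \ref{representative}, Lemma \ref{quo}, and Corollary \ref{Jac} are invoked it is essentially bookkeeping, and the rest of the proof reduces to quoting the weighted transformation formula with $\omega \equiv 1$.
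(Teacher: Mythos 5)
Your proposal is correct and follows essentially the same route as the paper: the corollary is obtained by specializing the weighted transformation theorem to $\omega \equiv 1$, and the validity at critical points rests on the divisibility of $\sum_{\sigma \in G}\det(\sigma)\m B_1(\sigma^{-1}\cdot \bl z,\bl w) = |G|\,\mb P_{\rm sgn}(\m B_1)_{\bl w}(\bl z)$ by $\ell_{\rm sgn}=J_{\bl\theta}$ in each variable. The paper handles that last point in the remark following the proof of Theorem \ref{firstmain} rather than in the corollary itself, so your cancellation argument is a restatement of that remark for the sign representation rather than new content.
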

The Bergman kernel of a domain is \emph{rational} if it is a rational function of the coordinates. If the Bergman kernel $\m B_1 = \frac{q}{p}$ is rational, then from Equation \eqref{bajee} it is clear that $\m B_2$ is also rational if $\bl f$ is a basic polynomial map associated to some complex reflection group. Since the denominator of the sum is given by some polynomial $\prod_\sigma p(\sigma^{-1}\cdot \bl z,\overbar{\bl w})=\prod_\sigma p( \bl z,\sigma^{-1}\cdot\overbar{\bl w})$ ($\m B_1$ is a $G$-invariant kernel) which is $G$-invariant in both $\bl z$ and $\overbar{\bl w}$ and thus by Chevalley-Shephard-Todd theorem the denominator is a polynomial in both $\bl f(\bl z)$ and $\overbar{\bl f(\bl w)}.$ For numerator of the sum, we observe that it can be written as $N(\bl z,\bl w) = \prod_{\tau \in G} q(\tau^{-1}\cdot \bl z,\bl w) \sum_{\sigma \in G} \det (\sigma) \frac{p(\sigma^{-1}\cdot \bl z,\bl w)}{q(\sigma^{-1}\cdot \bl z,\bl w)}.$ Clearly, 
\Bea
N(\sigma_0 \cdot \bl z,\bl w) &=& \prod_{\tau \in G} q(\tau^{-1}\cdot \bl z,\bl w) \sum_{\sigma \in G} \det (\sigma) \frac{p(\sigma^{-1}\sigma_0\cdot \bl z,\bl w)}{q(\sigma^{-1}\sigma_0\cdot \bl z,\bl w)}\\&=& \prod_{\tau \in G} q(\tau^{-1}\cdot \bl z,\bl w) \sum_{\gamma \in G} \det(\sigma_0) \det (\gamma) \frac{p(\gamma^{-1}\cdot \bl z,\bl w)}{q(\gamma^{-1}\cdot \bl z,\bl w)} \\&=& \chi_{\rm sgn}(\sigma_0^{-1}) N(\bl z,\bl w).
\Eea Using $G$-invariance of the Bergman kernel, we get $N(\bl z,\sigma_0 \cdot \bl w) = \chi_{\rm sgn}(\sigma_0) N(\bl z,\bl w).$ Hence $N(\bl z,\bl w)$ is divisible by 
 $\frac{1}{J_{\bl f}(\bl z)\overline{J_{\bl f}(\bl w)}}$ and the residue is a $G$-invariant polynomial and then using Chevalley-Shephard-Todd theorem we conclude the following.
\begin{cor}\label{ratcat}
Suppose that $\Omega_1$ is a $G$-invariant domain in $\mb C^d$ and the Bergman kernel of $\Omega_1$ is rational. Then the Bergman kernel of the domain $\bl \theta(\Omega_1)$ is also rational for a basic polynomial map $\bl \theta$ associated to $G.$ 
\end{cor}
\subsection{Groups Conjugate to Complex reflection groups}
Suppose that $\Omega_1 \text{~and~} \Omega_2$ are two domains in $\mb C^d$ and $G \subseteq {\rm Aut}(\Omega_1)$ is a finite complex reflection group.  The group \bea\label{conju}\widetilde{G}=\bl \Psi^{-1} G \bl \Psi\eea is said to be a conjugate to the complex reflection group $G$ by an automorphism $\bl \Psi \in {\rm Aut}(\Omega_1).$ Let $\bl F: \Omega_1 \to \Omega_2$ be a proper holomorphic map with the group of deck transformations $\widetilde{G}.$ Equivalently, \bea\label{facaut}
\bl F^{-1}\bl F(\bl z)=\bigcup_{\sigma\in \widetilde{G}}\{\sigma ( \bl z)\} \,\, \text{~for~} \bl z\in \Omega_1.
\eea
The proper holomorphic map $\bl F$ satisfying Equation \eqref{facaut} is referred as \emph{factored by automorphisms} $\widetilde{G}$ in \cite{MR807258,MR1131852}. We obtain a characterization for such proper holomorphic maps in the following proposition.
\begin{prop}\label{express}
Suppose that $\bl F: \Omega_1 \to \Omega_2$ is a proper holomorphic map between two bounded domains in $\mb C^d$ and $\widetilde{G}\subseteq {\rm Aut}(\Omega_1)$ is a conjugate to a complex reflection group $G$ by the automorphism $\bl \Psi \in {\rm Aut}(\Omega_1)$. Then $\bl F$ is factored by $\widetilde{G}$ if and only if  $\bl F= \bl \Phi \circ \bl \theta \circ \bl \Psi$, where $\bl \theta$ is a basic polynomial map associated to the group $G$ and $\bl \Phi$ is a biholomorphic map from $\bl \theta(\Omega_1)$ to $\Omega_2$.
\end{prop}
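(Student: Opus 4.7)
The plan is to reduce the proposition to Proposition \ref{representative} (the existence of the biholomorphism $\widehat{\bl f}$ for a proper map with a pseudoreflection group as deck transformations) by absorbing the automorphism $\bl \Psi$ into the domain. The key observation is that the conjugation relation $\widetilde G = \bl \Psi^{-1} G \bl \Psi$ translates the ``factored by $\widetilde G$'' condition for $\bl F$ into a ``factored by $G$'' condition for $\bl F \circ \bl \Psi^{-1}$.

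For the forward direction, I would first form the composition $\bl F \circ \bl \Psi^{-1} : \Omega_1 \to \Omega_2$, which is proper holomorphic since $\bl \Psi^{-1}$ is an automorphism and $\bl F$ is proper. Next I would verify directly that its group of deck transformations is exactly $G$: using \eqref{facaut} for $\bl F$,
\begin{align*}
(\bl F \circ \bl \Psi^{-1})^{-1}(\bl F \circ \bl \Psi^{-1})(\bl z)
&= \bl \Psi\bigl(\bl F^{-1}\bl F(\bl \Psi^{-1}(\bl z))\bigr)
= \bl \Psi\Bigl(\bigcup_{\sigma \in \widetilde G}\{\sigma(\bl \Psi^{-1}(\bl z))\}\Bigr) \\
&= \bigcup_{\sigma \in \widetilde G}\{\bl \Psi \sigma \bl \Psi^{-1}(\bl z)\}
= \bigcup_{\tau \in G}\{\tau(\bl z)\},
\end{align*}
where the last equality uses $\bl \Psi \widetilde G \bl \Psi^{-1} = G$. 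Applying Proposition \ref{representative} to $\bl F \circ \bl \Psi^{-1}$ then produces a unique biholomorphism $\bl \Phi : \bl \theta(\Omega_1) \to \Omega_2$ with $\bl F \circ \bl \Psi^{-1} = \bl \Phi \circ \bl \theta$, and composing with $\bl \Psi$ on the right yields $\bl F = \bl \Phi \circ \bl \theta \circ \bl \Psi$.

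For the backward direction I would simply unwind the definitions. Assuming $\bl F = \bl \Phi \circ \bl \theta \circ \bl \Psi$ with $\bl \Phi$ biholomorphic, I have $\bl F(\bl w) = \bl F(\bl z)$ iff $\bl \theta(\bl \Psi(\bl w)) = \bl \theta(\bl \Psi(\bl z))$ (using injectivity of $\bl \Phi$), iff $\bl \Psi(\bl w) \in G \cdot \bl \Psi(\bl z)$ (using that $G$ is the deck transformation group of $\bl \theta$ by Proposition \ref{domain}), iff $\bl w \in \bl \Psi^{-1} G \bl \Psi \cdot \bl z = \widetilde G \cdot \bl z$. This is precisely \eqref{facaut} for $\bl F$.

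The only non-routine point is the invocation of Proposition \ref{representative}, which already packages the substantive analytic content (the analytic Chevalley--Shephard--Todd theorem and properness of $\bl \theta$). All the remaining work consists of purely group-theoretic bookkeeping with the conjugation $\widetilde G = \bl \Psi^{-1} G \bl \Psi$, so I do not anticipate any genuine obstacle; the proposition is essentially a transport of Proposition \ref{representative} along the automorphism $\bl \Psi$.
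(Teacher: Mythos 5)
Your proof is correct and follows essentially the same route as the paper: both directions reduce to Proposition \ref{representative} by passing to $\bl f = \bl F \circ \bl \Psi^{-1}$ and using that conjugation by $\bl \Psi$ carries the ``factored by $\widetilde G$'' condition to ``factored by $G$.'' Your computation of the fibers of $\bl F \circ \bl \Psi^{-1}$ and your unwinding of the converse are just slightly more explicit versions of what the paper writes.
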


\begin{proof}
Suppose that the proper holomorphic map $\bl F$ is factored by the group $\bl \Psi^{-1} G \bl \Psi$. Then
\Bea
\bl F^{-1}\bl F(\bl z)=\bigcup_{\sigma \in G} \{ (\bl \Psi^{-1} \circ \sigma \circ \bl \Psi) (\bl z)\} \text{~for all~}\bl  z\in \Omega_1.
\Eea Consider the map $\bl f = \bl F \circ \bl \Psi^{-1}.$ For every $\bl z \in \Omega_1,$ $\bl f^{-1}\bl f(\bl z)=\bigcup_{\sigma \in G}  \{\sigma (\bl z)\},$ that is, the proper holomorphic map $\bl f : \Omega_1 \to \Omega_2$ is factored by the finite complex reflection group $G.$ From Proposition \ref{representative}, we get that $\bl f = \bl \Phi \circ \bl \theta$, where $\bl \Phi$ is a biholomorphic map from $\bl \theta(\Omega_1)$ to $\Omega_2$ and $\bl \theta$ is a basic polynomial map associated to the group $G.$ Therefore, $\bl F$ can be written in the desired way. 

Conversely, assume that we can express $\bl F= \bl \Phi \circ \bl \theta \circ \bl \Psi$, where $\bl \Phi$ is a biholomorphic map from $\bl \theta(\Omega_1)$ to $\Omega_2$, $\bl \theta$ is a basic polynomial map associated to $G$ and $\bl \Psi \in {\rm Aut}(\Omega_1)$. Note that $\bl \Phi \circ \bl \theta$ is factored by the group $G.$ Since $\bl \Phi \circ \bl \theta(\bl z) = \bl F \circ \bl \Psi^{-1}(\bl z)$ for all $\bl z \in \Omega_1,$ the result follows.
\end{proof}
\begin{thm}\label{main2}
Suppose that $\Omega_i,$ for $i=1,2$ are two bounded domains in $\mb C^d$ and $\bl F: \Omega_1 \to \Omega_2$ is a proper holomorphic map which is factored by $\widetilde{G} \subseteq {\rm Aut}(\Omega_1)$, where $\widetilde{G}$ is as in Equation \eqref{conju}. Then $\m B_2$ can be expressed in terms of $\m B_1$ by the following formula: 
\bea  \label{maineq2}
\m B_2\big(\bl F(\bl z), \bl F(\bl w)\big) =\frac{1}{J_{\bl F}(\bl z) \ov{J_{\bl F}(\bl w)}} \displaystyle\sum_{\sigma \in G} J_{\bl \Psi_\sigma}(\bl z) \m B_1\big(\bl \Psi_\sigma(\bl z),{\bl w}\big),
\eea
where $\m B_i$ is the Bergman kernel of the domain $\Omega_i,\,\,\bl \Psi\in {\rm Aut}(\Omega_1)$ and $\bl \Psi_\sigma = \bl \Psi^{-1} \circ \sigma^{-1} \circ \bl \Psi$ for $\sigma \in G.$
\end{thm}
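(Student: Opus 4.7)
My plan is to reduce the statement to the pseudoreflection-group case already handled in Corollary \ref{main}, and then push the Bergman kernel through the automorphism $\bl\Psi$ using the standard biholomorphic transformation rule. By Proposition \ref{express}, I can write $\bl F = \bl\Phi\circ\bl\theta\circ\bl\Psi$, so that $\bl f := \bl F\circ\bl\Psi^{-1} = \bl\Phi\circ\bl\theta : \Omega_1 \to \Omega_2$ is a proper holomorphic map whose deck transformation group is precisely the finite pseudoreflection group $G$ (since $\bl\theta$ has $G$ as its deck group and $\bl\Phi$ is biholomorphic). Applying Corollary \ref{main} to $\bl f$ at the points $\bl u=\bl\Psi(\bl z)$ and $\bl v=\bl\Psi(\bl w)$ gives
\begin{equation*}
\m B_2\bigl(\bl F(\bl z),\bl F(\bl w)\bigr) = \frac{1}{J_{\bl f}(\bl\Psi(\bl z))\,\overline{J_{\bl f}(\bl\Psi(\bl w))}}\sum_{\sigma\in G}\det(\sigma)\,\m B_1\bigl(\sigma^{-1}\!\cdot\!\bl\Psi(\bl z),\bl\Psi(\bl w)\bigr),
\end{equation*}
which is the right starting point.

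Next I will convert each summand, which involves the kernel evaluated at $\bl\Psi$-translated points, into one involving $\m B_1(\bl\Psi_\sigma(\bl z),\bl w)$. Reindexing $\sigma\mapsto\sigma^{-1}$ in the sum and using the group-action convention $\sigma^{-1}\!\cdot\!\bl x=\sigma(\bl x)$, each summand reads $\m B_1\bigl(\bl\Psi(\bl\Psi_\sigma(\bl z)),\bl\Psi(\bl w)\bigr)$ since $\sigma^{-1}\bl\Psi(\bl z)=\bl\Psi(\bl\Psi_\sigma(\bl z))$ by the definition $\bl\Psi_\sigma=\bl\Psi^{-1}\circ\sigma^{-1}\circ\bl\Psi$. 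Invoking the standard biholomorphic transformation rule for the Bergman kernel under the automorphism $\bl\Psi\in\mathrm{Aut}(\Omega_1)$ reduces this to $\m B_1(\bl\Psi_\sigma(\bl z),\bl w)$ divided by $J_{\bl\Psi}(\bl\Psi_\sigma(\bl z))\,\overline{J_{\bl\Psi}(\bl w)}$.

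Finally, two applications of the chain rule clean up the Jacobian factors. From $\bl F = \bl f\circ\bl\Psi$ we get $J_{\bl f}(\bl\Psi(\bl z)) = J_{\bl F}(\bl z)/J_{\bl\Psi}(\bl z)$, which converts the prefactor. And from the definition of $\bl\Psi_\sigma$, the chain rule yields
\begin{equation*}
J_{\bl\Psi_\sigma}(\bl z)=\frac{\det(\sigma)^{-1}\,J_{\bl\Psi}(\bl z)}{J_{\bl\Psi}(\bl\Psi_\sigma(\bl z))},
\end{equation*}
so the combination $\det(\sigma^{-1})/J_{\bl\Psi}(\bl\Psi_\sigma(\bl z))$ appearing after reindexing equals $J_{\bl\Psi_\sigma}(\bl z)/J_{\bl\Psi}(\bl z)$. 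All $J_{\bl\Psi}$ factors cancel cleanly, and the result \eqref{maineq2} drops out.

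The only delicate point is bookkeeping. The theorem uses $\bl\Psi_\sigma$ with $\sigma^{-1}$ in the middle, whereas the natural substitution into Corollary \ref{main} produces $\sigma$ in the middle; hence a relabelling $\sigma\leftrightarrow\sigma^{-1}$ in the sum is essential, and one has to keep track of whether the weight $\det(\sigma)$ that appears is the ``right'' one so that it eventually combines with the Jacobian of $\sigma^{-1}$ (namely $\det(\sigma)^{-1}$) to reconstruct $J_{\bl\Psi_\sigma}$. This is the main place where a careless sign or inversion would break the identity; everything else is just chain rule and the transformation law for the Bergman kernel under a biholomorphism.
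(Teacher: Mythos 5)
Your proposal is correct and follows essentially the same route as the paper: factor $\bl F=\bl\Phi\circ\bl\theta\circ\bl\Psi$ via Proposition \ref{express}, apply the pseudoreflection-group kernel formula at the points $\bl\Psi(\bl z),\bl\Psi(\bl w)$, and then use the biholomorphic transformation rule for $\m B_1$ under $\bl\Psi$ together with the chain rule $J_{\bl\Psi}(\bl\Psi_\sigma(\bl z))J_{\bl\Psi_\sigma}(\bl z)=J_{\sigma^{-1}}(\bl\Psi(\bl z))J_{\bl\Psi}(\bl z)$ to produce the weights $J_{\bl\Psi_\sigma}(\bl z)$. The only cosmetic differences are that you apply Corollary \ref{main} to $\bl\Phi\circ\bl\theta$ in one step (the paper applies it to $\bl\theta$ and handles $\bl\Phi$ at the end) and that you reindex $\sigma\mapsto\sigma^{-1}$ where the paper instead cancels $\det(\sigma)$ against $J_{\sigma^{-1}}(\bl\Psi(\bl z))$; both bookkeepings yield the same sum over $G$.
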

\begin{proof}
The Bergman kernel $\m B_1$ transforms under the automorphism $\bl \Psi \in {\rm Aut}(\Omega_1)$ following \cite[p. 419, Proposition 12.1.10]{MR3114789}:
\Bea
\m B_1\big(\bl \Psi(\bl z),\bl \Psi(\bl w)\big) =\frac{1}{J_{\bl \Psi}(\bl z) \ov{J_{\bl \Psi}(\bl w)}}  \m B_1(\bl z,\bl w)   \mbox{~for~} \bl z, \bl w\in \Omega_1.
\Eea
  Therefore, for a fixed $\bl w \in \Omega_1,$ \bea \label{seventh}
{(\m B_1)}_{\bl \Psi(\bl w)}(\bl z) \nonumber&=& \frac{1}{\ov{J_{\bl \Psi}(\bl w)}J_{\bl \Psi}\big(\bl \Psi^{-1}(\bl z)\big)} {(\m B_1)}_{\bl w}\big(\bl \Psi^{-1}(\bl z)\big)\\ 
{(\m B_1)}_{\bl \Psi(\bl w)}(\sigma^{-1} \cdot \bl z) \nonumber&=& \frac{1}{\ov{J_{\bl \Psi}(\bl w)}J_{\bl \Psi}\big(\bl \Psi^{-1}(\sigma^{-1} \cdot \bl z)\big)} {(\m B_1)}_{\bl w}\big(\bl \Psi^{-1}( \sigma^{-1} \cdot \bl z)\big) \\ {(\m B_1)}_{\bl \Psi(\bl w)}\big(\sigma^{-1} \cdot \bl \Psi(\bl z)\big) \nonumber&=& \frac{1}{\ov{J_{\bl \Psi}(\bl w)}J_{\bl \Psi}\big(\bl \Psi^{-1} \circ \sigma^{-1} \circ \bl \Psi( \bl z)\big)}  {(\m B_1)}_{\bl w}\big(\bl \Psi^{-1} \circ \sigma^{-1} \circ \bl \Psi(\bl z)\big) .
\eea

Let  $\bl \Psi_\sigma := \bl \Psi^{-1} \circ \sigma^{-1} \circ \bl \Psi.$ Then $\bl \Psi \circ \bl \Psi_\sigma = \sigma^{-1} \circ \bl \Psi.$ Application of the chain rule on the both sides yields 
$J_{\bl \Psi}\big(\bl \Psi_\sigma(\bl z)\big) J_{\bl \Psi_\sigma}(\bl z) = J_{\sigma^{-1}}\big(\bl \Psi(\bl z)\big) J_{\bl \Psi}(\bl z).$ Hence we have
\bea\label{next} \m B_1\big(\sigma^{-1} \cdot \bl \Psi(\bl z), \bl \Psi(\bl w)\big)  &=& \frac{J_{\bl \Psi_\sigma}(\bl z)}{\ov{J_{\bl \Psi}(\bl w)}J_{\bl \Psi}(\bl z)J_{\sigma^{-1}}\big(\bl \Psi(\bl z)\big)}  \m B_1\big(\bl \Psi_\sigma(\bl z),{\bl w}\big).\eea

Suppose that $\m B_{\bl \theta}$ denotes the Bergman kernel of the domain $\bl \theta(\Omega_1).$ From Equation \eqref{bajee}, we have 
\Bea
\m B_{\bl \theta}\big(\bl \theta\big(\bl \Psi(\bl z)\big), \bl \theta(\bl \Psi(\bl w))\big) = \frac{1}{J_{\bl \theta}\big(\bl \Psi(\bl z)\big)\overline{J_{\bl \theta}\big(\bl \Psi(\bl w)\big)}} \displaystyle\sum_{\sigma \in G} \det(\sigma ) \m B_1\big(\sigma^{-1} \cdot \bl \Psi(\bl z), \bl \Psi(\bl w)\big),
\Eea
for $\bl z, \bl w \in \Omega_1$. Equation \eqref{next} implies that \bea \label{eigth} \nonumber&& \m B_{\bl \theta}\big(\bl \theta(\bl \Psi(\bl z)), \bl \theta(\bl \Psi(\bl w))\big) \\ \nonumber&=& \frac{1}{J_{\bl \theta}(\bl \Psi(\bl z))\overline{J_{\bl \theta}\big(\bl \Psi(\bl w)\big)}}  \displaystyle\sum_{\sigma \in G} \det(\sigma) \frac{J_{\bl \Psi_\sigma}(\bl z)}{\ov{J_{\bl \Psi}(\bl w)}J_{\bl \Psi}(\bl z)J_{\sigma^{-1}}\big(\bl \Psi(\bl z)\big)}  \m B_1\big(\bl \Psi_\sigma(\bl z),{\bl w}\big)\\ \nonumber&=& \frac{1}{J_{\bl \theta \circ \bl \Psi}(\bl z)\ov{J_{\bl \theta \circ \bl \Psi}(\bl w)}} \displaystyle\sum_{\sigma \in G} \det(\sigma)  \frac{J_{\bl \Psi_\sigma}(\bl z)}{J_{\sigma^{-1}}\big(\bl \Psi(\bl z)\big)}\m B_1\big(\bl \Psi_\sigma(\bl z),{\bl w}\big) .\eea
Since $\bl F = \bl \Phi \circ \bl \theta \circ \bl \Psi,$ where $\bl \Phi : \bl \theta(\Omega_1) \to \Omega_2$ is a biholomorphism, we have the following:
\Bea \nonumber&&\m B_2\big(\bl F(\bl z), \bl F(\bl w)\big) \\ &=& \frac{1}{J_{\bl F}(\bl z)\ov{J_{\bl F}(\bl w)}}   \displaystyle\sum_{\sigma \in G} \det(\sigma) \frac{J_{\bl \Psi_\sigma}(\bl z)}{J_{\sigma^{-1}}(\bl \Psi(\bl z))}\m B_1\big(\bl \Psi_\sigma(\bl z),{\bl w}\big)  .\Eea
Note that for every $\bl z \in \Omega_1,$ $J_{\sigma^{-1}}(\bl \Psi(\bl z)) = \det(\sigma)$, so $\frac{\det(\sigma)}{J_{\sigma^{-1}}(\bl \Psi(\bl z))} = 1.$ This completes the proof.
\end{proof}
\begin{rem}\rm
The map $ \bl \iota: \Omega_1 \to \Omega_1$ given by $\bl \iota(\bl z) = \bl z$, is indeed an automorphism of the domain $\Omega_1.$ A trivial observation is that  Equation \eqref{maineq2} coincides with Equation \eqref{bajee} for $\bl \Psi = \bl \iota$.
\end{rem}
\section{Applications}\label{app}
The Bergman kernel on the domain $\Omega \subseteq \mb C^d$ is denoted by $\m B_{\Omega}.$ We fix this notation for the rest of our discussion. Let $\mb B_d$ be the unit ball with respect to the $\ell^2$-norm induced by the standard inner product $\langle \cdot,\cdot\rangle$ on $\mb C^d.$ The Bergman kernel of $\mb B_d$ is given by \cite[p. 172]{MR1375158}
\bea \m B_{\mb B_d}(\bl z,\bl w)&=& \big(1-\langle \bl z,\bl w\rangle\big)^{-(d+1)}.\eea

Let $\mb D^d=\{\bl z \in \mb C^d: |z_1|, \ldots , |z_d| < 1\}$, the unit ball with respect to $\ell^\infty$-norm, be the polydisc in $\mb C^d$. The Bergman kernel of $\mb D^d$ is given by \bea\label{bi} \m B_{\mb D^d}(\bl z,\bl w)&=&\prod_{j=1}^d (1-z_jw_j)^{-2}. \eea
In this section, we obtain formulae for the weighted Bergman kernels of  several domains which are biholomorphically equivalent to some quotient domains of the form $\Omega /G,$ where $\Omega = \mb B_d$ or $\mb D^d$ and $G$ is a finite complex reflection group. This demonstrates an useful application of Theorem \ref{firstmain} and Theorem \ref{main2}. 

Suppose that for a finite complex reflection group $G,$ a basic polynomial map associated to the group $G$ is denoted by $\bl \theta.$ Clearly, $J_{\bl \theta}$ is again a polynomial. If $\mb B_d$ (or $\mb D^d$) is $G$-invariant, then the kernel function $\m B_{\bl \theta(\mb B_d)}(\bl z,\bl w)$ (or $\m B_{\bl \theta(\mb D^d)}(\bl z,\bl w)$) is rational from Corollary \ref{ratcat}. In this section, we consider domains (except Rudin's domains) which can be realized as $\bl \theta(\mb D^d)$ for a basic polynomial map $\bl \theta$ associated to some finite complex reflection group $G.$ This provides classes of domains with rational Bergman kernels. 

\subsection{Rudin's domain}
Rudin  characterizes proper holomorphic mappings from $\mb B_d$ onto a domain $\Omega \subset \mathbb{C}^d, d>1,$ in \cite[p. 704, Theorem 1.6]{MR667790}, see also \cite[p. 506]{MR807287}. The result is stated as:
\begin{thm*} \cite[p. 704, Theorem 1.6]{MR667790}
Suppose that $ \bl F : \mb B_d \to \Omega$ is a proper holomorphic mapping from the open unit ball $\mb B_d$ in $\mb C^d (d>1)$ onto a domain $\Omega$ in $\mb C^d$ with multiplicity $m>1.$ Then there exists a unique finite complex reflection group $G$ of order $m$  such that 
\bea\label{factor}
\bl F = \bl \Phi \circ \bl \theta \circ \bl \Psi ,
\eea
where $\bl \Psi $ is an automorphism of $\mb B_d,$ $\bl \theta$ is a basic polynomial mapping associated to $G$ and $\bl \Phi:\bl\theta(\mb B_d)\to\Omega$ is  biholomorphic.
\end{thm*}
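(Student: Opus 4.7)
The plan is to recover the group $G$ intrinsically from $\bl F$ as a (conjugate of the) group of deck transformations, and then to verify that the smoothness of $\Omega$ forces this group to be generated by pseudoreflections. First I would set $H := \{\sigma \in \Aut(\mb B_d) : \bl F \circ \sigma = \bl F\}$, the deck transformation group of $\bl F$. Since $\bl F$ is a proper holomorphic map between equidimensional complex manifolds of multiplicity $m$, standard branched covering theory shows that $H$ is finite with $|H|$ dividing $m$. Using that $\mb B_d$ is simply connected, so that the restriction of $\bl F$ to the unramified locus is a regular covering of the unramified part of $\Omega$, one upgrades this to $|H| = m$ and shows that $H$ acts transitively on generic fibres of $\bl F$.

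Next I would apply a fixed-point theorem to the finite group $H$ acting holomorphically on the bounded symmetric domain $\mb B_d$: Cartan's theorem, or equivalently averaging the identity map with respect to the $\Aut(\mb B_d)$-invariant Bergman metric, produces a common fixed point $p \in \mb B_d$ for $H$. Choosing $\bl \Psi \in \Aut(\mb B_d)$ with $\bl \Psi(0) = p$ and setting $G := \bl \Psi^{-1} H \bl \Psi$, I obtain a finite subgroup of $\Aut(\mb B_d)$ fixing the origin. Since every automorphism of $\mb B_d$ fixing $0$ is the restriction of a unitary transformation of $\mb C^d$, the group $G$ is in fact a finite subgroup of $\mathcal U_d$ of order $m$. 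The conjugated map $\bl f := \bl F \circ \bl \Psi : \mb B_d \to \Omega$ remains a proper holomorphic map with deck transformation group exactly $G$.

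The main obstacle is the step of showing that $G$ is generated by pseudoreflections; this is the analytic heart of the theorem. The map $\bl f$ descends to a biholomorphism from the complex-analytic quotient $\mb B_d / G$ onto $\Omega \subseteq \mb C^d$, so in particular $\mb B_d / G$ is smooth at the point $[0]$. Because $G$ already acts linearly near the origin, smoothness of the quotient at $[0]$ is equivalent, via the geometric form of the Chevalley--Shephard--Todd theorem, to $G$ being generated by pseudoreflections. Once this is in place, Proposition~\ref{representative} applies directly to $\bl f$ and yields a basic polynomial map $\bl \theta$ associated to $G$ together with a biholomorphism $\bl \Phi : \bl \theta(\mb B_d) \to \Omega$ satisfying $\bl f = \bl \Phi \circ \bl \theta$. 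Substituting back gives $\bl F = \bl \Phi \circ \bl \theta \circ \bl \Psi^{-1}$, which is the asserted factorization after relabelling $\bl \Psi^{-1}$ as $\bl \Psi$. Finally, uniqueness of $G$ (up to conjugation by an automorphism of $\mb B_d$) follows because $H = \bl \Psi G \bl \Psi^{-1}$ is intrinsically determined by $\bl F$ as its deck transformation group; the conjugacy ambiguity is exactly the freedom in the choice of $\bl \Psi$ moving $p$ to $0$, which only modifies $G$ by a unitary conjugation.
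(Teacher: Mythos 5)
First, a remark on the comparison you asked for: the paper does not prove this statement at all. It is quoted verbatim from Rudin \cite{MR667790} and used as a black box, so there is no internal proof to measure your argument against; I can only assess it on its own terms.

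There is a genuine gap, and it sits at your first substantive step: the claim that the deck transformation group $H=\{\sigma\in\Aut(\mb B_d):\bl F\circ\sigma=\bl F\}$ has order $m$ and acts transitively on generic fibres ``because $\mb B_d$ is simply connected, so the restriction of $\bl F$ to the unramified locus is a regular covering.'' The unramified locus in the source is $\mb B_d$ minus the preimage of the critical values, i.e.\ the complement of an analytic hypersurface, and that set is \emph{not} simply connected; simple connectivity of the ambient ball says nothing about whether $\bl F_*\pi_1$ is normal in the fundamental group of the complement of the critical values in $\Omega$, which is what regularity of the covering means. Your reasoning, applied verbatim to $d=1$ (where $\mb D$ is also simply connected and a degree-$m$ Blaschke product is a proper map of multiplicity $m$), would show that every finite Blaschke product is a quotient by a group of $m$ M\"obius transformations --- false already for a generic Blaschke product of degree $3$, which has two distinct simple critical points and hence trivial deck group. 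Establishing that proper images of $\mb B_d$ for $d>1$ are always \emph{Galois} quotients is precisely the hard analytic content of Rudin's theorem and the only place the hypothesis $d>1$ enters: one must show that the local inverse-branch germs $\gamma$ satisfying $\bl F\circ\gamma=\bl F$ extend across the branch locus to global automorphisms of $\mb B_d$, which Rudin accomplishes by a normal-form analysis along the branch variety combined with extension results in the spirit of Alexander's theorem on proper self-maps of the ball. You have assumed exactly this point.

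The remainder of your outline is sound \emph{conditional} on that step: Cartan's fixed-point theorem does linearize the group at a common fixed point, automorphisms of $\mb B_d$ fixing $0$ are unitary, the smoothness of $\Omega\cong\mb B_d/G$ at the image of $0$ does force $G$ to be a reflection group by the geometric Chevalley--Shephard--Todd theorem, and Proposition \ref{representative} then delivers the factorization. But you have located the ``analytic heart'' in the wrong place: the CST step is the routine part, and the Galois property of the covering is the theorem.
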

In other words, any proper holomorphic mapping from the open unit ball $\mb B_d$ onto the domain $\Omega$ is factored by automorphisms $\widetilde{G},$ where $\widetilde{G}$ is a conjugate to the finite complex reflection group $G$ by the automorphism $\bl \Psi \in {\rm Aut}(\mb B_d).$ Such a domain $\Omega$ is referred as Rudin's domain in \cite[p. 427]{MR742433}. Here, we include formulae for the Bergman kernels of Rudin's domains to exhibit a direct application of Theorem \ref{main2}. 

The group of unitary operators on $\mb C^d,$ $\m U_d,$ leaves the open unit ball $\mb B_d$ invariant. Since any complex reflection group $G$ acting on $\mb C^d$ is a subgroup of $\m U_d,$  $\mb B_d$ is $G$-invariant. Now we use Theorem \ref{main2} to get the following result.
\begin{thm}
Suppose that $ \bl F : \mb B_d \to \Omega$ is a proper holomorphic mapping. Then the Bergman kernel $\m B_{\Omega}$ is given by the following formula : \bea\label{final}  \m B_{\Omega}\big(\bl F(\bl z), \bl F(\bl w)\big) = \frac{1}{J_{\bl F}(\bl z) \ov{J_{\bl F}(\bl w)}} \displaystyle\sum_{\sigma \in G}  \frac{J_{\bl \Psi_\sigma}(\bl z)}{\big(1 - \langle \bl \Psi_\sigma(\bl z), \bl w \rangle \big)^{d+1}},\qquad \eea where $\Psi\in {\rm Aut}(\mb B_d)$ and $\bl \Psi_\sigma = \bl \Psi^{-1}\sigma^{-1} \bl \Psi$ for $\sigma \in G.$
\end{thm}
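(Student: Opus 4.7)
The plan is to recognize this theorem as an immediate specialization of Theorem \ref{main2} to the case $\Omega_1 = \mb B_d$, combined with Rudin's factorization theorem and the explicit form of the Bergman kernel of the ball. No new machinery is required; the task is essentially to verify the hypotheses of Theorem \ref{main2} and then substitute.

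First I would invoke Rudin's theorem, which guarantees the existence of a finite pseudoreflection group $G$ of order equal to the multiplicity of $\bl F$, together with a factorization $\bl F = \bl \Phi \circ \bl \theta \circ \bl \Psi$, where $\bl \Psi \in \Aut(\mb B_d)$, $\bl \theta$ is a basic polynomial map associated to $G$, and $\bl \Phi : \bl \theta(\mb B_d) \to \Omega$ is biholomorphic. By Proposition \ref{express} (in the converse direction), this factorization is equivalent to $\bl F$ being factored by automorphisms $\widetilde{G} = \bl \Psi^{-1} G \bl \Psi$.

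Next I would check that the hypotheses of Theorem \ref{main2} hold with $\Omega_1 = \mb B_d$ and $\Omega_2 = \Omega$. The key point is that $\mb B_d$ is $G$-invariant: every finite pseudoreflection group $G$ is a subgroup of the unitary group $\mathcal U_d$, and unitary transformations leave $\mb B_d$ invariant. With this verified, Theorem \ref{main2} applied to the proper holomorphic map $\bl F : \mb B_d \to \Omega$ gives
\[
\m B_\Omega\big(\bl F(\bl z),\bl F(\bl w)\big) = \frac{1}{J_{\bl F}(\bl z)\,\overline{J_{\bl F}(\bl w)}} \sum_{\sigma \in G} J_{\bl \Psi_\sigma}(\bl z)\, \m B_{\mb B_d}\big(\bl \Psi_\sigma(\bl z),\bl w\big),
\]
where $\bl \Psi_\sigma = \bl \Psi^{-1} \circ \sigma^{-1} \circ \bl \Psi$. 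Substituting the explicit Bergman kernel of the ball, $\m B_{\mb B_d}(\bl u,\bl v) = (1 - \langle \bl u, \bl v\rangle)^{-(d+1)}$, yields exactly the formula \eqref{final}.

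There is no serious obstacle: all the heavy lifting, namely the accounting for the intertwining automorphism $\bl \Psi$ and the cancellation $J_{\sigma^{-1}}(\bl \Psi(\bl z)) = \det(\sigma)$ against $\det(\sigma)$ from the sign representation, has already been carried out in the proof of Theorem \ref{main2}. The only thing to emphasize is that the group $G$ in the statement is precisely the one produced by Rudin's theorem, and that the formula is independent of the choice of the basic polynomial map $\bl \theta$ because it appears only implicitly through $\bl F$.
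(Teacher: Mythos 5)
Your proof is correct and follows essentially the same route as the paper: the author likewise combines Rudin's factorization theorem, the observation that $\mb B_d$ is $G$-invariant because $G \subseteq \mathcal U_d$, and a direct application of Theorem \ref{main2} with the explicit Bergman kernel of the ball. No gaps.
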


\subsection{Symmetrized Polydisc}
The permutation group on $d$ symbols is denoted by $\mathfrak S_d$. The group $\mathfrak S_d$ acts on $\mb C^d$ by permuting its coordinates, that is, 
\Bea
\sigma \cdot (z_1,\ldots,z_d) = (z_{\sigma^{-1}(1)},\ldots,z_{\sigma^{-1}(d)}) \,\, \text{for}\,\,  \sigma \in \mathfrak S_d\,\, \text{and}\,\, (z_1,\ldots,z_d) \in \mb C^d.
\Eea
Clearly, the open unit polydisc $\mb D^d$ is invariant under the action of the group $\mathfrak S_d.$   

Let $s_k$ denote the elementary symmetric polynomials of degree $k$ in $d$ variables, for $k=1,\ldots, d.$ 
The symmetrization map $\bl s :=(s_1,\ldots,s_d) : \mb C^d \to \mb C^d$ is a basic polynomial map associated to the complex reflection group $\mathfrak S_d.$ The domain $\mb G_d:=\bl s(\mb D^d)$ is known as the symmetrized polydisc.  

The weight function $\omega(\bl z) = \prod_{j=1}^d (1-|z_j|^2)^{\l -2}$ is $\mathfrak{S}_d$-invariant, so there exists continuous function $\widetilde{\omega}:\mb G_d \to (0,\infty)$ such that $\omega = \widetilde{\omega} \circ \bl s.$ For $\l >1,$ the reproducing kernel of the weighted Bergman space $\mb A^2_\omega(\mb D^d)$ is $\m B_{\mb D^d}^{(\l)}(\bl z,\bl w)=\prod_{j=1}^d (1-z_jw_j)^{-\l}.$

The symmetric group $\mathfrak{S}_d$ has only two one-dimensional representations in $\widehat{\mathfrak{S}}_d.$ Those are the sign representation and the trivial representation of $\mathfrak{S}_d$.

{\fontfamily{qpl}\selectfont For the sign representation.} The relative invariant subspace subspace $R^{\mathfrak{S}_d}_\sgn(\mb A_\omega^2(\mb D^d))$ is isometrically isomorphic to the Bergman space $\mb A_{\widetilde{\omega}}^2(\mb G_d)$(cf. Theorem \ref{firstmain}). Therefore, the weighted Bergman kernel $\m B_{\widetilde{\omega}}$ of $\mb A_{\widetilde{\omega}}^2(\mb G_d)$  can be derived in terms of the weighted Bergman kernel of $\m B_{\mb D^d}^{(\l)}.$ From Equation \eqref{la},  the weighted Bergman kernel $\m B_{\widetilde{\omega}}$ is given by the  formula:
\Bea
\m B_{\widetilde{\omega}}\big(\bl s(\bl z), \bl s(\bl w)\big) &=& \frac{1}{J_{\bl s}(\bl z) \ov{J_{\bl s}(\bl w)}} \displaystyle\sum_{\sigma \in \mathfrak S_d} \det(\sigma) \m B_{\mb D^d}^{(\l)}(\sigma^{-1} \cdot \bl z, \bl w)    .\Eea
Note that  $\det(\sigma) = {\rm sgn}(\sigma^{-1}),$ for $\sigma \in \mathfrak S_d.$ 
Therefore, 
\Bea \m B_{\widetilde{\omega}}\big(\bl s(\bl z), \bl s(\bl w)\big)&=& \frac{1}{J_{\bl s}(\bl z) \ov{J_{\bl s}(\bl w)}} \displaystyle\sum_{\sigma \in \mathfrak S_d} {\rm sgn}(\sigma^{-1}) \m B_{\mb D^d}^{(\l)}( \bl z, \sigma \cdot \bl w)   \\  &=& \frac{1}{J_{\bl s}(\bl z) \ov{J_{\bl s}(\bl w)}} \displaystyle\sum_{\sigma \in \mathfrak S_d} {\rm sgn}(\sigma^{-1}) \prod_{i=1}^d(1-z_i\bar{w}_{\sigma^{-1}(i)})^{-\l}   \\ &=& \frac{1}{J_{\bl s}(\bl z) \ov{J_{\bl s}(\bl w)}} \det \big(\!\!\big( (1 - z_i \bar{w_j})^{-\l}\big)\!\!\big)_{i,j =1 }^{d} .
\Eea
Note that  $J_{\bl s}(\bl z)= \displaystyle\prod_{i<j} (z_i-z_j)$ \cite[p. 370, Lemma 10]{MR2135687}. \begin{prop}\label{anti}
The weighted Bergman kernel of $\mb A_{\widetilde{\omega}}^2(\mb G_d)$ is given by $$\m B_{\widetilde{\omega}}\big(\bl s(\bl z), \bl s(\bl w)\big)=\frac{\det \big(\!\!\big( (1 - z_i \bar{w_j})^{-\l}\big)\!\!\big)_{i,j =1 }^{d}}{\displaystyle\prod_{i<j} (z_i-z_j) (\overbar{w_i}-\overbar{w_j})},\,\, \bl z, \bl w \in \mb D^d.$$
\end{prop}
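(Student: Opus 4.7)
The plan is to specialize Corollary \ref{parti} (equivalently, Equation \eqref{la}) to the case $\Omega = \mb D^d$, $G = \mathfrak{S}_d$, $\bl \theta = \bl s$, and the weighted Bergman kernel $\m B_{\mb D^d}^{(\l)}(\bl z,\bl w) = \prod_{j=1}^d (1-z_j\bar w_j)^{-\l}$ on $\mb A^2_\omega(\mb D^d)$ with $\omega(\bl z) = \prod_{j=1}^d(1-|z_j|^2)^{\l-2}$. Since $\omega$ is $\mathfrak{S}_d$-invariant, it descends to a continuous positive weight $\widetilde{\omega}$ on $\mb G_d$ with $\omega = \widetilde{\omega}\circ \bl s$, so the hypotheses of Corollary \ref{parti} are satisfied.

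First, I would apply Equation \eqref{la} directly to obtain
\begin{equation*}
\m B_{\widetilde{\omega}}\big(\bl s(\bl z), \bl s(\bl w)\big) = \frac{1}{J_{\bl s}(\bl z)\,\ov{J_{\bl s}(\bl w)}}\sum_{\sigma\in \mathfrak{S}_d}\det(\sigma)\,\m B_{\mb D^d}^{(\l)}(\sigma^{-1}\cdot \bl z,\bl w).
\end{equation*}
The next step is to rewrite the sum. Using that $\det(\sigma)=\sgn(\sigma)$ for a permutation and that the inner product $\langle\cdot,\cdot\rangle$ is invariant under the simultaneous action on both arguments, I would move the permutation from $\bl z$ to $\bl w$ (replacing $\sigma$ by $\sigma^{-1}$ which preserves the sign), yielding
\begin{equation*}
\sum_{\sigma\in\mathfrak{S}_d}\sgn(\sigma)\prod_{i=1}^d (1-z_i\bar w_{\sigma(i)})^{-\l}.
\end{equation*}
By the Leibniz expansion of the determinant, this is exactly $\det\bigl(\!\bigl((1-z_i\bar w_j)^{-\l}\bigr)\!\bigr)_{i,j=1}^d$.

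Finally, I would substitute the known value of the jacobian of the symmetrization map, $J_{\bl s}(\bl z) = \prod_{i<j}(z_i-z_j)$ (the Vandermonde determinant, as cited from \cite[p.~370, Lemma 10]{MR2135687}), and the complex conjugate expression for $\ov{J_{\bl s}(\bl w)}$. Combining gives the claimed formula. There is no serious obstacle: the only minor point to verify carefully is the bookkeeping of the sign and the direction of the permutation when converting the sum into a determinant, but this is a routine reindexing.
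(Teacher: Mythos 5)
Your proposal is correct and follows essentially the same route as the paper: specializing Equation \eqref{la} to $G=\mathfrak{S}_d$, transferring the permutation from $\bl z$ to $\bl w$ to recognize the Leibniz expansion of $\det\big(\!\!\big((1-z_i\bar w_j)^{-\l}\big)\!\!\big)$, and substituting the Vandermonde expression for $J_{\bl s}$. The sign bookkeeping is harmless since $\det(\sigma)=\sgn(\sigma)=\sgn(\sigma^{-1})$ for permutations, exactly as in the paper's computation.
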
 In particular, for $\l =2$ the weight function $\omega \equiv 1$ (consequently, $\widetilde{\omega} \equiv 1$) and the Bergman kernel of the symmetrized polydisc $\mb G_d$ can be deduced from the above result which is \Bea
\m B_{\mb G_d}(\bl s(\bl z), \bl s(\bl w)) = \frac{\det \big(\!\!\big( (1 - z_i \bar{w_j})^{-2}\big)\!\!\big)_{i,j =1 }^{d}}{\displaystyle\prod_{i<j} (z_i-z_j) (\overbar{w_i}-\overbar{w_j})}. 
\Eea This expression of the Bergman kernel of the symmetrized polydisc was obtained  in \cite[p. 369, Proposition 9]{MR2135687} using Bell's transformation rule on the set of regular values of the symmetrization map $\bl s.$ A different approach is followed to derive this formula in \cite[p. 2366, Theorem 2.3]{MR3043017}. 

{\fontfamily{qpl}\selectfont For the trivial representation.} The trivial representation of $\mathfrak{S}_d$ is given by ${\rm tr} : \mathfrak{S}_d \to \mb C^*$ such that ${\rm tr}(\sigma) = 1$ for all $\sigma \in \mathfrak{S}_d.$ The generating polynomial $\ell_{\rm tr}$ is a constant polynomial, so we can choose $\ell_{\rm tr} \equiv 1.$ Then from Theorem \ref{firstmain}, the relative invariant subspace $R^{\mathfrak{S}_d}_{\rm tr}(\mb A_\omega^2(\mb D^d))$ is isometrically isomorphic to the Bergman space $\mb A_{\widetilde{\omega}_1}^2(\mb G_d),$ where the weight function is given by \bea\label{eqight}\widetilde{\omega}_1(\bl s(\bl z)) = \frac{1}{\prod_{i<j} |z_i-z_j|^2} \widetilde{\omega}(\bl s(\bl z)).\eea An explicit expression for the reproducing kernel $\m B_{\widetilde{\omega}_1}$ of the weighted Bergman space $\mb A_{\widetilde{\omega}_1}^2(\mb G_d)$ is derived using Equation \eqref{formu}:
\Bea  \m B_{\widetilde{\omega}_1}(\bl s(\bl z), \bl s(\bl w)) &=& \displaystyle\sum_{\sigma \in \mathfrak S_d}  \m B^{(\l)}_{\mb D^d}(\sigma^{-1} \cdot \bl z, \bl w) \\ &=& \displaystyle\sum_{\sigma \in \mathfrak S_d} \prod_{i=1}^d(1-z_i\bar{w}_{\sigma^{-1}(i)})^{-\l} \\ &=&  {\rm perm}\big(\!\!\big( (1 - z_i \bar{w_j})^{-\l}\big)\!\!\big)_{i,j =1 }^{d}, \Eea
where ${\rm perm}A$ denotes the permanent of the matrix $A.$
\begin{prop}\label{sym}
Let ${\widetilde{\omega}_1} : \mb G_d \setminus \bl s(N) \to (0,\infty)$ be the continuous function defined as in Equation \eqref{eqight}, where $N = \{\bl z \in \mb D^d : z_i = z_j \text{~for at least two~} i,j, i\neq j\}$. The reproducing kernel $\m B_{\widetilde{\omega}_1}$ of the weighted Bergman space $\mb A_{\widetilde{\omega}_1}^2(\mb G_d)$ is given by  \Bea
\m B_{\widetilde{\omega}_1}(\bl s(\bl z), \bl s(\bl w)) = {\rm perm}\big(\!\!\big( (1 - z_i \bar{w_j})^{-\l}\big)\!\!\big)_{i,j =1 }^{d},\,\, \bl z, \bl w \in \mb D^d, \Eea
where ${\rm perm}A$ denotes the permanent of the matrix $A.$
\end{prop}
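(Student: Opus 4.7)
The plan is to specialize Theorem~\ref{firstmain} to the trivial representation of $\mathfrak{S}_d$ and then directly evaluate the resulting symmetric sum, recognising it as a permanent. First, I would identify the representation-theoretic data: the character of the trivial representation $\mathrm{tr}$ of $\mathfrak{S}_d$ satisfies $\chi_{\mathrm{tr}}(\sigma)=1$ for all $\sigma$, and by Lemma~\ref{gencz} the generating polynomial $\ell_{\mathrm{tr}}$ may be chosen to be the constant $1$, since the module of $\mathrm{tr}$-invariant polynomials over $\mb C[z_1,\ldots,z_d]^{\mathfrak{S}_d}$ coincides with $\mb C[z_1,\ldots,z_d]^{\mathfrak{S}_d}$ itself. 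With these choices, the weight $\omega_{\mathrm{tr}}$ prescribed by Equation~\eqref{weight} reduces exactly to the $\widetilde{\omega}_1$ of Equation~\eqref{eqight}, since $|\ell_{\mathrm{tr}}(\bl z)|^2/|J_{\bl s}(\bl z)|^2 = 1/\prod_{i<j}|z_i-z_j|^2$.

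Next, I would plug these data into the kernel formula \eqref{formu} of Theorem~\ref{firstmain} and invoke the explicit weighted Bergman kernel $\m B_\omega(\bl z,\bl w) = \prod_{j=1}^d(1-z_j\bar{w_j})^{-\lambda}$ of $\mb A^2_\omega(\mb D^d)$, valid for $\lambda>1$. Since both $\chi_{\mathrm{tr}}$ and $\ell_{\mathrm{tr}}$ trivialise, this immediately yields
$$\m B_{\widetilde{\omega}_1}\big(\bl s(\bl z),\bl s(\bl w)\big) \,=\, \sum_{\sigma\in\mathfrak{S}_d}\m B_\omega(\sigma^{-1}\cdot\bl z,\bl w) \,=\, \sum_{\sigma\in\mathfrak{S}_d}\prod_{j=1}^d\bigl(1-z_j\bar{w}_{\sigma^{-1}(j)}\bigr)^{-\lambda},$$
where the last equality uses the coordinate-permutation description of the $\mathfrak{S}_d$-action together with a reindexing inside the product. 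A final relabelling $\tau=\sigma^{-1}$ in the sum identifies the right-hand side with $\mathrm{perm}\big(\!\!\big((1-z_i\bar{w_j})^{-\lambda}\big)\!\!\big)_{i,j=1}^d$, by the defining expansion of the permanent.

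Essentially no obstacle arises in this argument: Theorem~\ref{firstmain} absorbs all the analytic content, and what remains is bookkeeping. The only point demanding care is the consistent tracking of how the coordinate permutation applied to $\bl z$ translates into an index permutation on the conjugated $\bl w$-entries, so that the symmetric sum is identified with the permanent in its standard form.
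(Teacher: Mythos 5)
Your proposal is correct and follows essentially the same route as the paper: specialize Theorem~\ref{firstmain} to the trivial representation with $\ell_{\rm tr}\equiv 1$ and $\chi_{\rm tr}\equiv 1$, note that the weight \eqref{weight} reduces to $\widetilde{\omega}_1$ of \eqref{eqight}, and identify $\sum_{\sigma\in\mathfrak{S}_d}\prod_{i}(1-z_i\bar{w}_{\sigma^{-1}(i)})^{-\lambda}$ with the permanent after reindexing. No gaps.
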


\subsection{The quotient of the unit bidisc by the dihedral group} ($\mathbb D^2 / D_{2k}$)
Let $D_{2k} = \langle \delta, \sigma : \delta^k=\sigma^2 ={{\rm id}, \sigma \delta \sigma^{-1} = \delta^{-1}} \rangle$ be the dihedral group of order $2k.$ We define its action on $\C^2$ via the faithful representation $\pi$ defined by
\Bea
     \pi : D_{2k} \to GL(2,\mb C): \delta\mapsto \begin{bmatrix}
    \omega_k & 0\\
    0 & {\omega_k}^{-1}
    \end{bmatrix}, \sigma\mapsto \begin{bmatrix}
    0 & 1\\
    1 & 0
    \end{bmatrix},
    \Eea
    where $\omega_k$ denotes a primitive $k$-th root of unity. We write the matrix representation of the group action with respect to the standard basis of $\C^2.$ The polynomial map $\bl \phi(z_1,z_2) = (z_1^k + z_2^k, z_1z_2)$ for $(z_1,z_2) \in \mb C^2$, is a basic polynomial map associated to the complex reflection group $D_{2k}.$ The open unit bidisc $\mb D^2$ is invariant under this action. The restriction map $\bl \phi: \mb D^2 \to \bl \phi(\mb D^2):=\m D_{2k}$ is a proper holomorphic map of multiplicity $2k$ and is factored by automorphisms $D_{2k}.$ Clearly, $J_{\bl \phi}(z_1,z_2) = k(z_1^k -z_2^k).$
    
    The number of one-dimensional representations of the dihedral group $D_{2k}$ in $\widehat{D}_{2k}$ is $2$ if $k$ is odd and $4$ if $k$ is even. Clearly, for every $k \in \mb N$ the trivial representation of $D_{2k}$ and the sign representation of $D_{2k}$ are in $\widehat{D}_{2k}.$
  
  {\fontfamily{qpl}\selectfont For the sign representation.} Since the relative invariant subspace subspace $R^{D_{2k}}_\sgn(\mb A^2(\mb D^2))$ is isometrically isomorphic to the Bergman space $\mb A^2(\m D_{2k}),$ we derive the Bergman kernel of the domain $\m D_{2k}$ using Corollary \ref{parti}. Note that
 \Bea
 \det(\sigma^i \delta^j) =  \begin{cases}              \phantom{-}1& \text{if } i=0, \\
              -1& \text{if } i=1,
        \end{cases}
        \Eea
        for $j=0,\ldots,k-1.$  Recalling  Equation \eqref{bi}, $\m B_{\mb D^2}(\bl z, \bl w) = \frac{1}{(1- z_1\overbar{w_1})^2(1-  z_2\overbar{w_2})^2},$ we conclude form Equation \eqref{thir} that: \Bea&& \m B_{\m D_{2k}}\big(\bl \phi(z_1,z_2),\bl \phi(w_1,w_2)\big) 
        \\ &=& \frac{1}{k^2(z_1^k -z_2^k)(\overline{w_1}^k -\overline{w_2}^k)} \sum_{i=1}^k \left( \m B_{\mb D^2}( \delta^i \cdot \bl z, \bl w) - \m B_{\mb D^2}(\sigma \delta^i \cdot \bl z, \bl w) \right) \\ &=& \frac{1}{k^2(z_1^k -z_2^k)(\overline{w_1}^k -\overline{w_2}^k)}  \times \\ && \sum_{i=1}^k\left( \frac{1}{(1- \omega_k^i z_1\overbar{w_1})^2(1- \omega_k^{k-i} z_2\overbar{w_2})^2} - \frac{1}{(1- \omega_k^i z_2\overbar{w_1})^2(1- \omega_k^{k-i} z_1\overbar{w_2})^2}\right). \Eea
        After a tedious but straightforward calculation, we state the following proposition.
        \begin{prop}
        The Bergman kernel of $\m D_{2k}$ is given by the following formula:
        \Bea&&\m B_{\m D_{2k}}\big(\bl \phi(z_1,z_2),\bl \phi(w_1,w_2)\big) \\&=& \frac{(z_1-z_2)}{k^2(z_1^k -z_2^k)(\overline{w_1}^k -\overline{w_2}^k)}  \times \\ && \sum_{i=1}^k\frac{\big(2(1+z_1z_2\overline{w_1}\overline{w_2}) -(z_1+z_2)(\omega_k^i\overline{w_1} + \omega_k^{k-i}\overline{w_2})\big)(\omega_k^i\overline{w_1} - \omega_k^{k-i}\overline{w_2})}{\big(X_1X_2 - (\omega_k^i\overline{w_1} + \omega_k^{k-i}\overline{w_2})(z_2X_1+z_1X_2) +z_1z_2(\omega_k^i\overline{w_1} + \omega_k^{k-i}\overline{w_2})^2\big)^2},\Eea
        where $\omega_k$ is a primitive $k$-th root of unity and 
        $X_\ell=1+z_\ell^2\overline{w_1}\overline{w_2},$ for $\ell=1,2.$
        \end{prop}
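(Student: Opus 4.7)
The plan is to pick up exactly where the discussion just above the proposition leaves off and carry out the "tedious but straightforward" algebraic simplification. The starting point is the identity
\begin{align*}
\m B_{\m D_{2k}}\big(\bl\phi(\bl z),\bl\phi(\bl w)\big)
&=\frac{1}{k^2(z_1^k-z_2^k)(\ov{w_1}^k-\ov{w_2}^k)}\\
&\quad\times\sum_{i=1}^k\!\left(\frac{1}{(1-\omega_k^i z_1\ov{w_1})^2(1-\omega_k^{k-i}z_2\ov{w_2})^2}-\frac{1}{(1-\omega_k^i z_2\ov{w_1})^2(1-\omega_k^{k-i}z_1\ov{w_2})^2}\right),
\end{align*}
which has already been derived from Corollary \ref{parti} together with $J_{\bl\phi}(\bl z)=k(z_1^k-z_2^k)$ and the explicit signs $\det(\sigma^j\delta^i)=(-1)^j$. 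So what remains is to put each summand over a single denominator and identify the structure promised in the statement.

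Abbreviate $a=\omega_k^i\,\ov{w_1}$ and $b=\omega_k^{k-i}\,\ov{w_2}$; note that $ab=\omega_k^k\,\ov{w_1}\,\ov{w_2}=\ov{w_1}\,\ov{w_2}$. The key algebraic identity that unlocks the simplification is
\[
(1-az_\ell)(1-bz_\ell)=1+abz_\ell^2-(a+b)z_\ell=X_\ell-(a+b)z_\ell,\qquad \ell=1,2,
\]
because $X_\ell=1+z_\ell^2\,\ov{w_1}\,\ov{w_2}=1+abz_\ell^2$. Multiplying these for $\ell=1,2$ gives
\[
(1-az_1)(1-bz_1)(1-az_2)(1-bz_2)=X_1X_2-(a+b)(z_2X_1+z_1X_2)+z_1z_2(a+b)^2,
\]
so the squared common denominator of the two summands is exactly the squared bracket appearing in the stated formula.

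For the numerator, set $P=(1-az_2)(1-bz_1)$ and $Q=(1-az_1)(1-bz_2)$, so the $i$-th summand equals $(P^2-Q^2)/D^2$. A direct expansion yields
\[
P-Q=(a-b)(z_1-z_2),\qquad P+Q=2(1+abz_1z_2)-(a+b)(z_1+z_2),
\]
giving
\[
P^2-Q^2=(z_1-z_2)(a-b)\bigl[\,2(1+z_1z_2\,\ov{w_1}\,\ov{w_2})-(z_1+z_2)(a+b)\bigr].
\]
Substituting $a-b=\omega_k^i\ov{w_1}-\omega_k^{k-i}\ov{w_2}$ and $a+b=\omega_k^i\ov{w_1}+\omega_k^{k-i}\ov{w_2}$, pulling the common factor $(z_1-z_2)$ out of the sum (it is independent of $i$), and collecting everything over the outer prefactor $\frac{1}{k^2(z_1^k-z_2^k)(\ov{w_1}^k-\ov{w_2}^k)}$ reproduces the claimed formula verbatim.

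The main obstacle is purely bookkeeping: one must be careful that the product $ab$ collapses to $\ov{w_1}\,\ov{w_2}$ precisely because $\omega_k^k=1$, and that the factorization $(1-az_\ell)(1-bz_\ell)=X_\ell-(a+b)z_\ell$ is what bundles four linear factors into the single quadratic expression in $X_1,X_2$. No deeper input is needed beyond these identifications and the algebraic identity $P^2-Q^2=(P-Q)(P+Q)$.
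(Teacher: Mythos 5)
Your proposal is correct and follows exactly the route the paper intends: the paper derives the same starting sum from Corollary \ref{parti} and then dismisses the remaining simplification as ``a tedious but straightforward calculation,'' which is precisely the algebra you supply. Your identities $(1-az_\ell)(1-bz_\ell)=X_\ell-(a+b)z_\ell$, $P-Q=(a-b)(z_1-z_2)$, and $P+Q=2(1+abz_1z_2)-(a+b)(z_1+z_2)$ all check out and reproduce the stated numerator and denominator verbatim.
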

 {\fontfamily{qpl}\selectfont For the trivial representation.} The trivial representation of $D_{2k}$ is given by ${\rm tr} : D_{2k} \to \mb C^*$ such that ${\rm tr}(\sigma) = 1$ for all $\sigma \in D_{2k}.$ The generating polynomial $\ell_{\rm tr}$ can be taken as $\ell_{\rm tr} \equiv 1.$ From Theorem \ref{firstmain}, we get that the relative invariant subspace $R^{D_{2k}}_{\rm tr}(\mb A^2(\mb D^2))$ is isometrically isomorphic to the Bergman space $\mb A_\omega^2(\m D_{2k}),$ where the weight function is given by \bea\label{eqight1}\omega(\bl \phi(\bl z)) = \frac{1}{k(z_1^k -z_2^k)}.\eea Hence we have the following from Equation \eqref{formu}:
 \begin{prop}
Let $\omega : \m D_{2k} \setminus \bl \phi(N) \to (0,\infty)$ be the continuous function defined as in Equation \eqref{eqight1}, where $N = \{\bl z \in \mb D^2 : z_1 = \omega_k z_2,~ \omega_k \text{~is a $k$-th root of unity} \}$. The reproducing kernel $\m B_{\m D_{2k}}^\omega$ of the weighted Bergman space $\mb A_\omega^2(\m D_{2k})$ is given by  \Bea
&&\m B_{\m D_{2k}}^\omega\big(\bl \phi(z_1,z_2),\bl \phi(w_1,w_2)\big)\\ &=& \sum_{i=1}^k\left( \frac{1}{(1- \omega_k^i z_1\overbar{w_1})^2(1- \omega_k^{k-i} z_2\overbar{w_2})^2} + \frac{1}{(1- \omega_k^i z_2\overbar{w_1})^2(1- \omega_k^{k-i} z_1\overbar{w_2})^2}\right),\,\, \bl z, \bl w \in \mb D^2. \Eea
\end{prop}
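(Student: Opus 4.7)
The plan is to apply Theorem~\ref{firstmain} directly, with $G=D_{2k}$, $\Omega=\mb D^2$, the trivial weight $\omega\equiv 1$ on $\mb D^2$ (so correspondingly $\widetilde{\omega}\equiv 1$), and $\varrho$ equal to the trivial representation $\rm tr$ of $D_{2k}$. For this choice $\chi_{\rm tr}(\sigma)=1$ for every $\sigma\in D_{2k}$ and, as observed in the text immediately preceding the proposition, the generating polynomial may be taken as $\ell_{\rm tr}\equiv 1$. Plugging these into \eqref{formu} reduces the statement to verifying
\[
\m B_{\m D_{2k}}^{\omega}\!\big(\bl\phi(\bl z),\bl\phi(\bl w)\big) \;=\; \sum_{\sigma\in D_{2k}} \m B_{\mb D^2}\!\big(\sigma^{-1}\cdot \bl z,\,\bl w\big),
\]
where on the right the bidisc kernel is as in \eqref{bi} and the weight on the left is precisely the $\omega_{\rm tr}$ of \eqref{eqight1}.

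Next I would list every element of $D_{2k}$ uniquely as $\delta^j$ or $\sigma\delta^j$ for $0\le j\le k-1$. Recalling the action convention $\tau\cdot\bl z=\tau^{-1}\bl z$, the argument $\sigma^{-1}\cdot\bl z$ appearing in the sum equals $\sigma\bl z$ under the matrix representation $\pi$. A direct matrix computation then gives
\[
\delta^j\,\bl z = (\omega_k^{j} z_1,\,\omega_k^{-j} z_2), \qquad (\sigma\delta^j)\,\bl z = (\omega_k^{-j} z_2,\,\omega_k^{j} z_1),
\]
and substituting these $2k$ arguments into $\m B_{\mb D^2}(\bl z,\bl w)=(1-z_1\overline{w_1})^{-2}(1-z_2\overline{w_2})^{-2}$ produces $2k$ explicit rational summands.

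The last step is a harmless reindexing. Writing $i=j$ and using $\omega_k^{-j}=\omega_k^{k-j}$, with the relabelling $j=0\leftrightarrow i=k$ (since $\omega_k^0=\omega_k^k$), the $k$ rotation contributions collapse to $\sum_{i=1}^k\bigl[(1-\omega_k^{i}z_1\overline{w_1})^2(1-\omega_k^{k-i}z_2\overline{w_2})^2\bigr]^{-1}$. Analogously, after the bijection $i\equiv -j\pmod k$ on $\{0,\dots,k-1\}$, the $k$ reflection contributions give $\sum_{i=1}^k\bigl[(1-\omega_k^{i}z_2\overline{w_1})^2(1-\omega_k^{k-i}z_1\overline{w_2})^2\bigr]^{-1}$, matching the asserted formula. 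I do not anticipate any substantive obstacle beyond careful bookkeeping: one must keep the two inverses straight (the convention $\tau\cdot\bl z=\tau^{-1}\bl z$ and the $\sigma^{-1}$ inside \eqref{formu}), but since the sum runs over all of $D_{2k}$, $\sigma\mapsto\sigma^{-1}$ is a bijection and no sign, character value, or normalisation factor is lost.
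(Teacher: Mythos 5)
Your proposal is correct and follows exactly the paper's route: specialize Theorem~\ref{firstmain} (Equation~\eqref{formu}) to the trivial representation with $\ell_{\rm tr}\equiv 1$ and $\chi_{\rm tr}\equiv 1$, enumerate the $2k$ elements of $D_{2k}$ as $\delta^j$ and $\sigma\delta^j$, and reindex the resulting sum of bidisc kernels. The paper leaves this computation implicit ("Hence we have the following from Equation~\eqref{formu}"), and your bookkeeping of the two inverses and the substitutions $\delta^j\bl z=(\omega_k^jz_1,\omega_k^{-j}z_2)$, $(\sigma\delta^j)\bl z=(\omega_k^{-j}z_2,\omega_k^jz_1)$ fills it in correctly.
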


{\fontfamily{qpl}\selectfont For two additional one-dimensional representations while $k=2k'$.} While $k$ is even, $D_{2k}$ has two more one-dimensional representations. We refer those by $\varrho_1$ and $\varrho_2,$ where \Bea \varrho_1(\delta) = -1 &\text{~and~}& \varrho_1(\tau) =1 \text{~for~} \tau \in \inner{\delta^2}{\sigma}, \\ \varrho_2(\delta) = -1 &\text{~and~}& \varrho_2(\tau) =1 \text{~for~} \tau \in \inner{\delta^2}{\delta\sigma}.\Eea The generating polynomials associated to $\varrho_1$ and $\varrho_2$ are given by $\ell_{\varrho_1}(\bl z) = z_1^{k'} + z_2^{k'}$ and $\ell_{\varrho_2}(\bl z) = z_1^{k'} - z_2^{k'},$ respectively. Let the weight functions $\widehat{\omega}_1 : \m D_{2k} \setminus \bl \phi(N_1) \to (0,\infty)$ and $\widehat{\omega}_2: \m D_{2k} \setminus \bl \phi(N_2) \to (0,\infty)$ be defined as the following:
\Bea \widehat{\omega}_1(\bl \phi(\bl z)) = \frac{1}{k(z_1^{k'} -z_2^{k'})} \text{~~and~~} \widehat{\omega}_2(\bl \phi(\bl z)) = \frac{1}{k(z_1^{k'} +z_2^{k'})},\Eea where $N_1 =\{\bl z \in \mb D^2 : z_1 = \omega_{k'} z_2,~ \omega_{k'} \text{~is a $k'$-th root of unity} \}$ and $N_2 = \{\bl z \in \mb D^2 : z^{k'}_1 + z^{k'}_2 =0 \}.$ Now a similar approach as above (using Theorem \ref{firstmain}) will lead to explicit expressions for the reproducing kernels of the weighted Bergman spaces $\mb A_{\widehat{\omega}_1}^2(\m D_{2k})$ and $\mb A_{\widehat{\omega}_2}^2(\m D_{2k})$.


\subsection{Monomial Polyhedron}
 
  For $d \geq 2,$ a $d$-tuple $\bl \alpha = (\alpha_1,\ldots,\alpha_d) \in \mb Q^d$ of rational numbers and a $d$-tuple of complex numbers $\bl z = (z_1,\ldots,z_d) \in \mb C^d$, we denote $\bl z^{\bl \alpha} := \displaystyle\prod_{k=1}^d z_k^{\alpha_k}.$ 
 Consider a matrix $B \in M_d(\mb Q).$  We enumerate the row vectors of $B$ by $\mathcal F = \{\bl b^1,\ldots,\bl b^d\},$ where $\bl b^k = (b_1^k,\ldots,b_d^k).$ The monomial polyhedron associated to $B$ is defined by $$\mathscr U = \{ \bl z \in \mb C^d : |{\bl z}^{\bl b^k}| <1 \text{~for all ~} 1\leq k \leq d\},$$ unless for some $1 \leq k,j \leq d$, the quantity $ z_j^{b_j^k}$ is not defined due to the division of zero, see \cite[Equation 1.1]{bender2020lpregularity}. Without loss of generality, we  assume that $B \in M_d(\mb Z),~ \det(B)>0$ and $B^{-1} \succeq 0$ \cite[Equation 3.3]{bender2020lpregularity}.  
 
 Set  $A = \adj B.$ The rows of $A$ are enumerated by $\{\bl a^1,\ldots,\bl a^d\}$. 
We borrow
the proper holomorphic map $\bl \Phi_A : \mb D^d_{L(B)} \to \mathscr U$ defined by $$ \bl \Phi_A (\bl z) = (\bl z^{\bl a^1},\ldots,\bl z^{\bl a^d}),\,\, \text{~for~} \bl z \in \mb D^d_{L(B)}$$ from \cite[Theorem 3.12]{bender2020lpregularity}, where $\mb D^d_{L(B)}$ is the product of some copies of the unit disc with some copies of the
punctured unit disc. The proper map $\bl \Phi_A$ is of quotient type with complex reflection group $G$ and $G$ is isomorphic to the direct product of cyclic groups $\prod_{i=1}^d \mb Z / {\delta i} \mb Z,$ where each $\delta_i \in \mb Z$ is coming from the Smith Normal form of the matrix $A,$ that says, $A = PDQ,$ where $P,Q \in GL_d(\mb Z)$ and $D ={\rm diag}(\delta_1,\ldots,\delta_d) \in M_d(\mb Z).$ Let $\wp : \prod_{i=1}^d \mb Z / {\delta i} \mb Z \to G$ be a group isomorphism. Set $S_G = \{\bl n = (n_1,\ldots,n_d) \in \mb N^d : 1 \leq n_i \leq |\delta_i| \}.$ We write 
\Bea
G= \{\sigma_{\bl n} :   \wp\big(\prod_{i=1}^d \omega_{|\delta_i|}^{n_i}\big) = \sigma_{\bl n} \text{~for~} \bl n = (n_1,\ldots,n_d) \in S_G\},
\Eea
where $\omega_{|\delta_i|}$ is a primitive $|\delta_i|$-th root of unity for $i=1,\ldots,d$. 
\begin{prop}
The Bergman kernel $\m B_{\mathscr U}$ of the monomial polyhedron $\mathscr U$ is given by: \bea\label{bergU}
\m B_{\mathscr U}\big(\bl \Phi_A(\bl z), \bl \Phi_A(\bl w)\big) = \frac{1}{(\det A)^2} \cdot \frac{\prod_{i=1}^d z_i\overline{w_i}}{\prod_{i=1}^d \bl z^{\bl a^i}\overline{\bl w}^{\bl a^i}} 
\displaystyle\sum_{\bl n \in S_G} \prod_{i=1}^d \omega_{|\delta_i|}^{n_i} \m B_{\mb D^d}(\sigma_{\bl n}^{-1} \cdot \bl z, \bl w), 
\eea
where $\m B_{\mb D^d}$ is the Bergman kernel of $\mb D^d.$
\end{prop}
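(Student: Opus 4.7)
The plan is to apply Corollary \ref{main} directly to the proper holomorphic map $\bl \Phi_A : \mb D^d_{L(B)} \to \mathscr U$, which has the pseudoreflection group $G$ as its group of deck transformations. Since $\mb D^d_{L(B)}$ differs from $\mb D^d$ only on a Lebesgue-null set (a union of coordinate hyperplanes), the two domains share the same Bergman space and hence the same reproducing kernel $\m B_{\mb D^d}$. Thus Corollary \ref{main} immediately gives
\begin{equation*}
\m B_{\mathscr U}\big(\bl \Phi_A(\bl z), \bl \Phi_A(\bl w)\big) = \frac{1}{J_{\bl \Phi_A}(\bl z)\,\overline{J_{\bl \Phi_A}(\bl w)}} \sum_{\sigma \in G} \det(\sigma)\, \m B_{\mb D^d}(\sigma^{-1}\cdot \bl z, \bl w),
\end{equation*}
and the proof reduces to two computations: an explicit formula for $J_{\bl \Phi_A}$ and an identification of $\det(\sigma_{\bl n})$.

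For the Jacobian, I would differentiate the monomial components of $\bl \Phi_A(\bl z)=(\bl z^{\bl a^1},\ldots,\bl z^{\bl a^d})$. The $(i,j)$-entry of the complex Jacobian matrix is $\frac{\partial \bl z^{\bl a^i}}{\partial z_j} = \frac{a^i_j}{z_j}\,\bl z^{\bl a^i}$. Pulling the factor $\bl z^{\bl a^i}$ out of the $i$-th row and $1/z_j$ out of the $j$-th column of this matrix gives
\begin{equation*}
J_{\bl \Phi_A}(\bl z) \;=\; \det(A)\cdot \frac{\prod_{i=1}^d \bl z^{\bl a^i}}{\prod_{i=1}^d z_i}.
\end{equation*}
Since $A=\adj B$ and $\det B>0$, the number $\det A=(\det B)^{d-1}$ is a positive real, so the prefactor of the Bergman kernel expression becomes $\frac{1}{(\det A)^2}\cdot \frac{\prod_i z_i\overline{w_i}}{\prod_i \bl z^{\bl a^i}\overline{\bl w}^{\bl a^i}}$, matching the leading factor in \eqref{bergU}.

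It remains to identify $\det(\sigma_{\bl n})$ with $\prod_{i=1}^d \omega_{|\delta_i|}^{n_i}$. Every deck transformation of $\bl \Phi_A$ acts on $\mb D^d_{L(B)}$ as a diagonal matrix $\mathrm{diag}(\lambda_1,\ldots,\lambda_d)$ whose entries satisfy $\bl \lambda^{\bl a^i}=1$ for $i=1,\ldots,d$; these diagonals form a finite abelian subgroup of $(\C^*)^d$ of order $|\det A|$. Using the Smith normal form $A=PDQ$ with $D=\mathrm{diag}(\delta_1,\ldots,\delta_d)$, the isomorphism $\wp : \prod_{i=1}^d \mb Z/\delta_i\mb Z \to G$ is constructed so that the product of diagonal entries of $\wp(\prod_i \omega_{|\delta_i|}^{n_i})$ is exactly $\prod_i \omega_{|\delta_i|}^{n_i}$. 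This is the main technical point of the argument: one must verify that under $\wp$, the determinant character of $G$ corresponds precisely to the character $(n_1,\ldots,n_d)\mapsto \prod_i \omega_{|\delta_i|}^{n_i}$ on $\prod_i \mb Z/\delta_i\mb Z$. Substituting this identification and the Jacobian formula into Corollary \ref{main} then yields Equation \eqref{bergU} verbatim.
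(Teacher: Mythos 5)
Your proposal is correct and follows essentially the same route as the paper: invoke Corollary \ref{main} for the proper map $\bl \Phi_A$ (noting the Bergman kernel of $\mb D^d_{L(B)}$ agrees with that of $\mb D^d$), substitute the Jacobian formula $J_{\bl \Phi_A}(\bl z)=\det A\cdot \prod_i \bl z^{\bl a^i}/\prod_i z_i$, and identify $\det(\sigma_{\bl n})$ with $\prod_i \omega_{|\delta_i|}^{n_i}$ via the Smith normal form. The only differences are cosmetic: you derive the Jacobian by a row/column factorization where the paper cites \cite[Lemma 3.8]{bender2020lpregularity}, and you are slightly more explicit that the identification of the determinant character with $(n_1,\ldots,n_d)\mapsto \prod_i\omega_{|\delta_i|}^{n_i}$ depends on how the isomorphism $\wp$ is chosen.
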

\begin{proof}
We use Corollary \ref{main} to get the following: 
\Bea
\m B_{\mathscr U}\big(\bl \Phi_A(\bl z), \bl \Phi_A(\bl w)\big) = \frac{1}{J_{\bl \Phi_A}(\bl z)\overline{J_{\bl \Phi_A}(\bl w)}} \displaystyle\sum_{\sigma_{\bl n} \in G} \det(\sigma_{\bl n}) \widehat{\m B_{\mb D^d}}(\sigma_{\bl n}^{-1} \cdot \bl z, \bl w) ,
\Eea
 where the Bergman kernel of $\mb D^d_{L(B)}$ is denoted by $\widehat{\m B_{\mb D^d}}$. Note that $\widehat{\m B_{\mb D^d}}(\bl z, \bl w) = \m B_{\mb D^d}(\bl z, \bl w),$ whenever $\bl z, \bl w \in \mb D^d_{L(B)}.$ 
 Since the character remains unchanged under group isomorphism, the representation $\mu$ of $G$, as described in Equation \eqref{mu}, gives $\chi_\mu(\sigma^{-1}_{\bl n} ) = \det(\sigma_{\bl n}) = \prod_{i=1}^d \omega_{|\delta_i|}^{n_i}$ for every $\bl n \in S_G.$
 Moreover, from \cite[Lemma 3.8]{bender2020lpregularity} \bea \label{det} J_{\bl \Phi_A}(\bl z) = \det A \cdot \frac{\prod_{i=1}^d \bl z^{\bl a^i}}{\prod_{i=1}^d z_i}. \eea Thus the result follows.
\end{proof}
In \cite[Proposition 3.22]{bender2020lpregularity}, Chakrabarti et al. proved rationality of the Bergman kernel of the monomial polyhedron using Bell's transformation rule for the Bergman kernels under a proper holomorphic mapping. This can also be seen from Equation \eqref{bergU} and Chevalley-Shephard-Todd theorem, since the denominator of the sum is a $G$-invariant polynomial both in $\bl \Phi_A(\bl z)$ and $\overbar{\bl \Phi_A(\bl w)}$ and so is the numerator of the sum after dividing by $J_{\bl \Phi_A}(\bl z)\overline{J_{\bl \Phi_A}(\bl w)}.$ 


\subsubsection{Fat Hartogs Triangle.} Let $\gamma$ be a positive integer. For the matrix $B = \begin{pmatrix}
\gamma & -1 \\
0 & \phantom{-}1 
\end{pmatrix}$, the domain $\mathscr U$ can be written as $\{(z_1,z_2) \in \mb C^2 : |z_1|^\gamma <|z_2| <1 \}.$ This turns out to be a subclass of `fat Hartogs triangle' \cite[p. 4533]{MR4244876}. We denote such domains by $\Omega_\gamma.$ The map $\Phi_A : \mb D^2_{L(B)} \to \Omega_\gamma$, given by $(z_1,z_2) \mapsto (z_1z_2, z_2^\gamma),$ is a proper holomorphic map which is factored by a group isomorphic to $\mb Z_\gamma.$ From Corollary \ref{main}, we write the Bergman kernel for the domain $\Omega_\gamma$ as following:

\Bea \m B_{\Omega_\gamma}\big(\Phi_A(\bl z),\Phi_A(\bl w)\big)&=& \frac{1}{\gamma^2(z_2\overbar{w_2})^{\gamma}}  \displaystyle\sum_{\sigma \in \mb Z_\gamma} \det(\sigma) \m B_{\mb D^2}(\sigma^{-1} \cdot \bl z, \bl w)\\&=& \frac{1}{\gamma^2(z_2\overbar{w_2})^{\gamma}}  \displaystyle\sum_{k=1}^\gamma \omega_\gamma^k ~ \m B_{\mb D^2}\big((\omega_\gamma^kz_1,\omega_\gamma^{-k}z_2), (w_1,w_2)\big)\\
&=& \frac{1}{\gamma^2(z_2\overbar{w_2})^{\gamma}}  \displaystyle\sum_{k=1}^\gamma  \frac{\omega_\gamma^k}{(1 - \omega_\gamma^kz_1 \ov{w_1} )^2 (1 - \omega_\gamma^{-k}z_2 \ov{w_2})^2},
\Eea
where $\omega_\gamma$ is a primitive $\gamma$-th root of unity. 

\subsection{Complex Ellipsoid}
A subclass of complex ellipsoids is given by \Bea
\Omega_{p,q} := \{ (z_1,z_2) \in \mb C^2 : |z_1|^{2p'} + |z_2|^{2q'} < 1\}\Eea for $p=\frac{1}{p'}, q= \frac{1}{q'} \in \mb N$. 
The holomorphic map $ \Phi : \mb B_2 \to \Omega_{p,q}$ defined by $(z_1,z_2) \mapsto (z_1^p,z_2^q)$ is proper. The map $\Phi$ is factored by automorphisms $G \subseteq {\rm Aut}(\mb B_2),$ where $G$ is isomorphic to the complex reflection group $\mb Z_p \times \mb Z_q.$
We use Corollary \ref{main} to determine a formula for the Bergman kernel of $\Omega_{p,q},$ denoted by $\m B_{\Omega_{p,q}}$. Therefore, the kernel $\m B_{\Omega_{p,q}}$ is given by \Bea &&\m B_{\Omega_{p,q}}\big(\Phi(z_1,z_2),\Phi(w_1,w_2)\big) \\ &=& \frac{1}{(pq)^2 (z_1\overbar{w_1})^{p-1}(z_2\overbar{w_2})^{q-1}} \sum_{\alpha_1 = 1}^p\sum_{\alpha_2 = 1}^q \omega_p^{\alpha_1} \omega_q^{\alpha_2} \m B_{\mb B_2}\big((\omega_p^{\alpha_1}z_1,\omega_q^{\alpha_2}z_2),(w_1,w_2)\big)\\ &=& \frac{1}{(pq)^2 (z_1\overbar{w_1})^{p-1}(z_2\overbar{w_2})^{q-1}} \sum_{\alpha_1 = 1}^p\sum_{\alpha_2 = 1}^q \frac{\omega_p^{\alpha_1} \omega_q^{\alpha_2}}{\big(1 - (\omega_p^{\alpha_1}z_1\ov{w_1} + \omega_q^{\alpha_2}z_2 \ov{w_2})\big)^3},\Eea
where $(z_1,z_2), (w_1,w_2) \in \mb B_2,$ and $\omega_k$ denotes a primitive $k$-th root of unity.

\medskip \textit{Acknowledgment}.
The author would like to express her sincere gratitude to  Shibananda Biswas and Subrata Shyam Roy for several comments and suggestions in the preparation of this article.
 The author acknowledges the financial support of CV Raman postdoctoral fellowship from Indian Institute of Science and postdocotoral fellowship from Silesian University in Opava under GA CR grant no. 21-27941S. Also the research is part of the project No. 2022/45/P/ST1/01028 co-funded by the National Science Centre and the European Union Framework Programme for Research and Innovation Horizon 2020 under the Marie Sklodowska-Curie grant agreement No.945339. 

\subsection*{Conflict of interest} The author declares that there is no conflict of interest. 

\subsection*{Data availability statement}Data sharing is not applicable to this article as no new data were created or analyzed in this study.

\end{document}